\newtheorem{theorem}{Theorem}[section]
\newtheorem{proposition}{Proposition}[section]
\newtheorem{lemma}{Lemma}[section]
\newtheorem{corollary}{Corollary}[section]
\theoremstyle{definition}
\newtheorem{definition}{Definition}[section]
\theoremstyle{definition}
\newtheorem{procedure}{Procedure}
\newtheorem{remark}{Remark}[section]
\theoremstyle{remark}
\def\Ch{{\mathfrak {Ch}}}
\def\r{{\rm r}}
\def\BMO{{\rm BMO }}
\def\BO{{\rm BO}}
\def\OSC{{\rm OSC}}
\def\SUP{{\rm SUP}}
\def\INF{{\rm INF}}
\def\supp{{\rm supp\,}}
\def\esssup{{\rm esssup\, }}
\def\essinf{{\rm essinf\, }}
\def\Gen{{\mathfrak {Gen}}}
\def\ZU{\ensuremath{\mathbb U}}
\def\ZV{\ensuremath{\mathbb V}}
\def\ZS{\ensuremath{\mathcal S}}
\def\MM{\ensuremath{\mathcal M}}
\def\ZM{\ensuremath{\mathfrak M}}
\def\ZB{\ensuremath{\mathscr B}}
\def\zB{\ensuremath{\mathfrak B}}
\def\ZA{\ensuremath{\mathscr A}}
\def\ZZ{\ensuremath{\mathbb Z}}
\def\ZI{\ensuremath{\textbf 1}}
\def\ZN{\ensuremath{\mathbb N}}
\def\ZP{\ensuremath{\mathscr P}}
\def\ZK{\ensuremath{\mathcal K}}
\def\ZQ{\ensuremath{\mathcal Q}}
\def\ZR{\ensuremath{\mathbb R}}
\def\ZT{\ensuremath{\mathbb T}}
\def\pr{\ensuremath{\mathrm {pr}}}
\def\ZL{\ensuremath{\mathcal L}}
\def\ch{{\mathfrak {Ch}}}
\def\ZG{{\mathscr G\,}}
\def\ZC{{\mathscr {C}}}
\numberwithin{equation}{section}
\def\md#1#2\emd{\ifx0#1
	\begin{equation*} #2 \end{equation*}\fi  %  single line display, no number
	\ifx1#1\begin{equation}#2\end{equation}\fi   % single line display, number
	\ifx2#1\begin{align*}#2\end{align*}\fi   % aligned display, no number
	\ifx3#1\begin{align}#2\end{align}\fi    % aligned display, number
	\ifx4#1\begin{gather*}#2\end{gather*}\fi  % multline, not align, no number
	\ifx5#1\begin{gather}#2\end{gather}\fi   % multinline, not align
	\ifx6#1\begin{multline*}#2\end{multline*}\fi  %  display too long for one line
	\ifx7#1\begin{multline}#2\end{multline}\fi  % as above, with numbers
	\ifx8#1\begin{multline*}\begin{split}#2\end{split}\end{multline*}\fi
	\ifx9#1\begin{multline}\begin{split}#2\end{split}\end{multline}\fi
}
\newcommand {\e }[1]{\eqref{#1}}
\newcommand {\lem }[1]{Lemma \ref{#1}}
\newcommand {\rem }[1]{Remark \ref{#1}}
\newcommand {\cor }[1]{Corollary \ref{#1}}
\newcommand {\pro }[1]{Proposition \ref{#1}}
\newcommand {\trm }[1]{Theorem \ref{#1}}
\newcommand {\df }[1]{Definition \ref{#1}}
\title[] {New estimates for bounded oscillation operators}
\author{Grigori A. Karagulyan}
\address{Institute of Mathematics of NAS of RA, Marshal Baghramian ave., 24/5, Yerevan, 0019, Armenia} 
\address{Faculty of Mathematics and Mechanics, Yerevan State
University, Alex Manoogian, 1, 0025, Yerevan, Armenia} 
\email{g.karagulyan@ysu.am}
\thanks{The work was supported by the Science Committee of RA, in the frames of the research project 21AG‐1A045}
\subjclass[2010]{42C05, 42C10, 42C25, 42A55}
\keywords{Calder\'on-Zygmund operator, weighted inequalities, sparse operators, maximal function, martingale transform, homogeneous spaces}
\begin{document}

\begin{abstract}
	Bounded oscillation ($\BO$) operators were recently introduced and studied in \cite{Kar3}. Those are operators running on abstract measure spaces equipped with a ball-basis. It was proved that many operators in harmonic analysis (Calder\'on-Zygmund operators and their maximally modulations, Carleson type operators, martingale transforms, Littlewood-Paley  square functions, maximal function, etc) are $\BO$ operators. Various properties of $\BO$ operators were studied in \cite{Kar1,Kar3}.  A multilinear version of $\BO$ operators recently was considered in \cite{Ming}, recovering series of results of papers \cite{Kar1,Kar3} in multilinear setting. A main line of these works are so called sparse dominations results, a subject of intensive study of recent years, which is closely related to sharp weighted norm and exponential decay estimates of various operators. First sparse domination results were proved for the Calder\'on-Zygmund operators in \cite{Ler,CoRe,Lac}, providing a simplified proof to the $A_2$-conjecture that was earlier solved in \cite{Hyt}. 
	
	  In this paper we will consider a generalized version of $\BO$ operators, involving new parameters in the definition, as well as considering the operators on vector-valued function spaces.  With this definition we will capture some more operators to the class of $\BO$ operators. We prove sparse domination and exponential decay estimates, with various applications in harmonic analysis operators. We provide a new simplified approach, separating certain set theoretic proposition, which become a basic tool in the proofs of the main results.  For a "narrowed" class  of $\BO$ operators we also obtain new type of sparse estimation, involving mean oscillation instead of integral averages in the definition of sparse operators. Among with new corollaries we recover also series of results obtained in recent years. In particular, we prove the boundedness of  maximally modulated Calder\'on-Zygmund operators on spaces $\BMO$. 
\end{abstract}

	\maketitle  
%%%%%%%%%%%%%%%%%%%%%%%%%%%%%% SECTION  SECTION SECTION
%%%%%%%%%%%%%%%%%%%%%%%%%%%%%% SECTION  SECTION SECTION
 \tableofcontents		
\section{Introduction}
\subsection{The definition of bounded oscillation operators} The goal of this paper is to study new properties of bounded oscillation ($\BO$) operators recently introduced in \cite{Kar3},\cite{Kar1}.  It was shown in \cite{Kar3} that various classical operators in harmonic analysis (Calder\'on-Zygmund operator on homogeneous metric measure spaces, maximally modulated Calder\'on-Zygmund operators, in particular Carleson type maximal operators, martingale transforms and maximal function) are $\BO$ operators. It was also proved that $\BO$ operators share many common properties of those operators. In this paper we will consider a generalized version of $\BO$ operators, using new parameters in their definition, as well as considering the operators on vector-valued function spaces.  With this definition we capture some more operators in the class of $\BO$ operators (Riesz potentials, some Littlewood-Paley square functions, fractional maximal function, vector-valued martingale transform).   

To define the $\BO$ operators we will need some preliminary definitions and notation. $\BO$ operators are defined on abstract measure spaces equipped with a ball-basis, which is a basic concept in this theory.  Recall the definition of ball-basis from \cite{Kar3}.
\begin{definition} Let $(X,\ZM, \mu)$ be a measure space with a $\sigma$-algebra $\ZM$ and a measure $\mu$. A family of measurable sets $\ZB$ is said to be a ball-basis if it satisfies the following conditions: 
	\begin{enumerate}
		\item[B1)] $0<\mu(B)<\infty$ for any ball $B\in\ZB$.
		\item[B2)] For any points $x,y\in X$ there exists a ball $B\ni x,y$.
		\item[B3)] If $E\in \ZM$, then for any $\varepsilon>0$ there exists a (finite or infinite) sequence of balls $B_k$, $k=1,2,\ldots$, such that $\mu(E\bigtriangleup \cup_k B_k)<\varepsilon$.
		\item[B4)] For any $B\in\ZB$ there is a ball $ [B]\in\ZB $ (called {\rm hull-ball} of $B$), satisfying the conditions
		\begin{align}
			&B^*=\bigcup_{A\in\ZB:\, \mu(A)\le 2\mu(B),\, A\cap B\neq\varnothing}A\subset  [B],\label{h12}\\
			&\qquad\qquad\mu\left([B]\right)\le \ZK\mu(B),\label{h13}
		\end{align}
		where $\ZK$ is a positive constant independent of $B$. 
	\end{enumerate}
\end{definition}
\begin{definition}
	We say that a ball basis $\ZB$ is doubling if there is a constant $\eta>1$ such that for any ball $B\in \ZB$, with $B^*\neq X$, one can find a $B'\in \ZB$, satisfying
	\begin{equation}\label{h73}
		B\subsetneq B',\quad \mu(B')\le\eta  \cdot \mu(B).
	\end{equation}
\end{definition}
Notice that properties B1)-B4) in the definition of ball-basis is a selection of basic properties of classical balls (or cubes) in $\ZR^n$, which are crucial in the study of singular operators on $\ZR^n$. Moreover, it was proved in \cite{Kar3} that the families of metric balls in measure spaces of homogeneous type form a ball-basis with the doubling condition. Other examples of doubling ball-basis are the families of dyadic cubes in $\ZR^n$ and those extensions in general measure spaces. Recall the definition of a martingale filtration that provide typical examples of non-doubling ball-bases. Let $(X,\mu)$ be a measure space and $\{\ZB_n:\, n\in\ZZ\}$ be collections of measurable sets (called filtration) such that
\begin{itemize}
	\item each $\ZB_n$ forms a finite or countable partition of $X$,
	\item each $A\in \ZB_n$ is a union of some sets of $\ZB_{n+1}$,
	\item the collection $\ZB=\cup_{n\in\ZZ}\ZB_n$ generates the $\sigma$-algebra $\ZM$,
	\item for any points $x,y\in X$ there is a set $A\in X$ such that $x,y\in A$.
\end{itemize}
One can easily check that $\ZB$ satisfies  the ball-basis conditions B1)-B4), where for any $A\in \ZB$ we can choose $[A]=A^*$, since clearly $A^*\in \ZB$. We call a ball-basis, satisfying above relations, a ball-basis of martingale filtration. Observe that a martingale ball-basis is doubling if and only if $\mu(\pr(A) )\le c\mu(A)$ for any $A\in \ZB$ and for some constant $c>0$, where $\pr(A)$ denotes the parent-ball of $A$.

\begin{definition}
	A ball-basis $\ZB$ is said to be separable if the union of an arbitrary collection of balls (not necessarily countable) is measurable.
\end{definition}
Note that the separability is an important property, which enables the measurability of certain maximal type operators. It is well known that classical examples of ball bases are separable. Observe that for general ball bases such a property can fail. This gap can be covered either assuming the separability of the ball-basis or introducing outer measure and $L^p$ spaces of non-measurable functions on $X$ as it was done in \cite{Kar3}.  Dislike to \cite{Kar3}, in this paper we choose the first way, since we will be working in vector-valued $L^p$ spaces. Hence, in the sequel we will always suppose that our ball bases is separable, in particular, the sets $B^*$ defined in \e{h12} are measurable.

Let $\ZU$ and $\ZV$ be Banach spaces. Denote by $L^0(X,\ZV)$ the space of $\ZV$-valued measurable functions on $X$, and let $L^r(X,\ZU)$ be the $L^r$ space of $\ZU$-valued functions on $X$. An operator 
\begin{equation}\label{y58}
	T:L^r(X,\ZU)\to L^0(X,\ZV) 
\end{equation}
is said to be sublinear if for any functions $f,g\in L^r(X,\ZU)$ we have 
\begin{equation}
	\left\|T\left(f+g\right)\right\|_\ZV\le 	\left\|T\left(f\right)\right\|_\ZV+	\left\|T\left(g\right)\right\|_\ZV.
\end{equation}
Let the parameters $r\ge 0$ and $ \varrho\ge \rho>0$ be fixed. For a function $f\in L^r(X,\ZU)$ and a ball $B\in \ZB$ we set 
\begin{align}
	&	\langle f\rangle_B=\frac{1}{(\mu(B))^\rho}\left(\int_B\|f\|_\ZU^r\right)^\varrho, \quad \langle f\rangle_B^*=\sup_{A\in \ZB:A\supset B}\langle f\rangle_{A},\label{y56}\\
	&\OSC_B(f)=\esssup_{x,x'\in B}\|f(x)-f(x')\|_\ZU,\label{y97}\\
	&\SUP_B(f)=\esssup_{x\in B}\|f(x)\|_\ZU,\quad \INF_B(f)=\essinf_{x\in B}\|f(x)\|_\ZU.\label{y98}
\end{align}
\begin{definition}
	We say that sublinear operator $T:L^r(X,\ZU)\to L^0(X,\ZV)$ is a bounded oscillation operator if it satisfies the conditions
	\begin{enumerate}
		\item [T0)] (Weak type inequality) If $B\in \ZB$, then
		\begin{equation}\label{y57}
			\mu\{x\in B:\, \|T(f\cdot \ZI_B)(x)\|_\ZV>\lambda \cdot \langle f\rangle_{B}\}\le\left(\frac{\ZL_0}{\lambda}\right)^{1/\rho}\mu(B),\quad \lambda>0,
		\end{equation}
		\item [T1)](Localization) for every $B\in \ZB$ we have
		\begin{equation}\label{d7}
			\OSC_B\big(T(f\cdot \ZI_{X\setminus B^*})\big) \le\ZL_1\cdot \langle f \rangle^*_{B},
		\end{equation}
		\item [T2)](Connectivity) for any ball $A\in \ZB$ ($A^*\neq X$) there exists a ball $B\supsetneq A$ (i.e. $B\supset A$, $B\neq A$) such that
		\begin{equation}\label{d20}
			\SUP_A\big(T(f\cdot \ZI_{B^*\setminus A^*})\big)\le\ZL_2\cdot \langle f\rangle_{B^*},
		\end{equation}
	\end{enumerate}
whenever $f\in L^r(X,\ZU)$. Here $\ZL_0=\ZL_0(T)$, $\ZL_1=\ZL_1(T)$ and $\ZL_2=\ZL_2(T)$ are constants depending only on the operator $T$. 
\end{definition}
Choosing $ \varrho=\rho=1/r$ in the notations \e{y56} and considering real-valued function spaces, we obtain the original definition of $\BO$ operators studied in \cite{Kar3,Kar1}. Besides, in \cite{Kar3} weak type condition T0) was not included in the definition of $\BO$ operators, it was stated separately as an additional property of $T$. Since the weak type inequality is always used in the proofs of basic properties of $\BO$ operators, we find convenient to include it directly in the definition. Let us recall several initial properties of the original $\BO$ operators proved in \cite{Kar3}.  
\begin{itemize}
	\item If the ball-basis $\ZB$ is doubling and a sublinear operator $T$ satisfies T0) and T1) conditions, then it satisfies also T2) and so is a $\BO$ operator. 
	\item If $T$ is a $\BO$ operator, then the truncated operator defined by
\begin{equation}\label{y50}
	T^*f(x)=\sup_{B\in \ZB,\, x\in B}\left\|T\left(f\cdot\ZI_{X\setminus B^*} \right)\right\|_\ZV
\end{equation}
is also a $\BO$ operator, acting from $L^r(X,\ZU)$ into $L^r(X,\ZR)$.
\item If $\{T_\alpha\}$ is a family of $\BO$ operators with uniformly bounded constants $\ZL_0(T_\alpha)$, $\ZL_1(T_\alpha)$ and $\ZL_2(T_\alpha)$, then the maximally modulated operator 
\begin{equation}
	T(f)=\sup_\alpha\|T_\alpha(f)\|_\ZV
\end{equation}
is also a $\BO$ operator, acting from $L^r(X,\ZU)$ into $L^r(X,\ZR)$.
\item It was proved in \cite{Kar1} that $\BO$ operators based on doubling ball-basis satisfy classical good-$\lambda$ inequalities common for Calder\'on–Zygmund operators and martingale transforms.
\end{itemize}
\subsection{Sparse domination by fractional means}
A classical problem in harmonic analysis is the control of an operator by means of simple operators. A model example of such a phenomenon is the technique of domination by so-called sparse operators, which became a powerful tool to obtain sharp weighted norm inequalities in the series of recent works. The sparse operators was first explicitly appeared in Lerner’s work \cite{Ler} towards an alternative proof of the $A_2$-theorem due to Hyt\"onen \cite{Hyt}. Namely, it was obtained in \cite{Ler} a norm estimation of classical Calderón–Zygmund operators by sparse operators. This was later improved to a pointwise estimate independently by Conde-Alonso and Rey \cite{CoRe} and by Lerner and Nazarov \cite{LeNa}. Afterwards, Lacey \cite{Lac} obtained the same pointwise domination for general $\omega$-Calder\'{o}n-Zygmund operators with  $\omega$, satisfying the Dini condition.  Hyt\"onen, Roncal and Tapiola \cite{HyRo} elaborated the proof of Lacey \cite{Lac} to get a precise linear dependence of the domination constant on the characteristic numbers of the operator.  Lacey \cite{Lac} obtained also a pointwise sparse domination for general martingale transforms (including non-doubling cases of filtrations), giving a short and elementary proof of the $A_2$-theorem for martingale transforms due to Thiele, Treil and Volberg in \cite {TTV}. This result of Lacey was the first sparse domination results in a non-doubling context.  Karagulyan \cite{Kar3} obtained a pointwise sparse domination for the original $\BO$ operators with parameters $\varrho =\rho=1/r$, unifying the mentioned sparse estimation results in one. The result of \cite{Kar3} also enabled to obtain the sparse estimate of $\omega$-Calder\'{o}n-Zygmund operators in metric measure spaces of homogeneous type as well as for the maximally modulated Calder\'{o}n-Zygmund operators. Later, Lorist \cite{Lor} proved a sparse domination result for $l^p$-linear vector-valued $\BO$-operators based on the Lebesgue measure $\ZR^n$. Applying an extending approach, the main result of \cite{Lor}  then have been reproved in \cite{LeLo}. In fact, the results of \cite{Lor, LeLo} were stated in a slight different form, namely in the terms of so-called grand maximal operator without mentioning $\BO$-operators at all. 

Finally note that multilinear versions of many results of papers \cite{Kar3}, \cite{Kar1} recently were proved in \cite{Ming}. 

In this paper we extend the sparse domination result of \cite{Kar3} for vector-valued $\BO$ operators with arbitrary parameters $r>0$, $ \varrho\ge \rho>0$ (see \trm{T7}). We use the same technique as in \cite{Kar3}, separating the main ingredient of the proof. Namely, we proof a new function-free ball sparsification  result (see \pro{P11}), which easily implies a sparse domination for general $\BO$ operators.  \pro{P11} will be also used in a new type of sparse domination, involving the mean oscillation over balls (see \trm{T8}). 

Recall the definitions of sparse and martingale collections of balls.
\begin{definition}
	A collection of measurable sets (or balls) $\ZS$ is said to be sparse or $\gamma$-sparse if for any $B\in \ZS$ there is a set $E_B\subset B$ such that $\mu(E_B)\ge \gamma\mu(B)$ and the sets $\{E_B:\, B\in \ZS\}$ are pairwise disjoint, where  $0<\gamma<1$ is a constant.   
\end{definition}
\begin{definition}
	A family of sets $\ZS$ is said to be martingale type if every couple of elements $A,B\in \ZA$ satisfies one of these three relations $A\cap B=\varnothing$, $A\subset B$ or $B\subset A$.
\end{definition}
	\begin{definition}
		For two collection of sets $\ZS$ and $\bar \ZS$ we write $\bar \ZS \Subset\ZS$ if there is a one to one mapping from $\ZS$ to $\bar \ZS$ taking every $A\in \ZS$ to a $\bar A\in \ZS$ such that $\bar A\subset A$. 
	\end{definition}Further, positive constants depending on $\ZK$, $\eta$, $r$, $\varrho$ and $\rho$ (see \e {h13}, \e{h73}, \e{y56}, \e{y57}) will be called admissible constants and the relation $a\lesssim b$ will stand for $a\le c\cdot b$, where $c>0$  is admissible. We write $a\sim b$ if the relations  $a\lesssim b$ and $b\lesssim a$ hold simultaneously.
	\begin{theorem}\label{T7}
	Let $(X,\mu)$ be a measure space equipped with a ball-basis $\ZB$ and $T$ be a $\BO$-operator from $L^r(X,\ZU)$ to $L^0(X,\ZV)$. Then for any function $f\in L^r(X,\ZU)$ with $\supp f\subset B\in\ZB$ there exist a ball $B'\supset B$, as well as a $\gamma$-sparse family of balls $\ZA$ and a martingale-family of measurable sets $\bar \ZA\Subset \ZA$ such that
	\begin{align}
		& \bigcup_{G\in \ZA}G\subset B',\quad \mu(B')\lesssim \mu(B),\label{y52}\\
		&\mu\big\{x\in X:\,\sum_{G\in \ZA}\ZI_{\bar G}(x)>\lambda\big\}\lesssim \exp(-c\lambda)\mu(B),\quad \lambda>0,\label{y53}\\
		&\|Tf(x)\|_\ZV\lesssim C(T)\sum_{G\in \ZA}\left\langle f\right\rangle_{G}\ZI_{\bar G}(x),\text { a.e. } x\in B,\label{y51}
	\end{align}
	where $0<\gamma<1$, $c>0$ are admissible constants and $C(T)=\ZL_0(T)+\ZL_1(T)+\ZL_2(T)$. The same inequality holds also for the truncated operator $T^*$ from \e{y50}.
\end{theorem}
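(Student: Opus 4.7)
The plan is to follow the Lerner-type iterative stopping-time argument used in the scalar case in \cite{Kar3}, pushing the combinatorial heart of the proof into the function-free ball sparsification \pro{P11} promised in the introduction. Since $\|Tf(x)\|_\ZV\le T^*f(x)$ pointwise, the same construction will automatically deliver the domination for $T^*$ as well.

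First I would produce the outer ball $B'$ by applying the connectivity condition T2) once or twice starting from $B$ (or from $[B]$), so that $B'\supset B$, $\mu(B')\lesssim \mu(B)$, and $Tf$ on $B'$ can be compared to $\langle f\rangle_{B'}$ via T2) across the gap. Inside $B'$ I build a stopping tree $\ZF$ rooted at $B'$: given $Q\in\ZF$, its children are a maximal pairwise disjoint family of balls $Q_j\subsetneq Q$ on each of which at least one of the three quantities
\begin{equation*}
\|T(f\cdot\ZI_{[Q]})\|_\ZV,\qquad \OSC_{Q_j}\big(T(f\cdot\ZI_{X\setminus Q^*})\big),\qquad \SUP_{Q_j}\big(T(f\cdot\ZI_{Q^*\setminus Q_j^*})\big)
\end{equation*}
exceeds $M\cdot C(T)\cdot\langle f\rangle_{[Q]}^*$ for a suitably large admissible constant $M$. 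The weak type T0) applied inside $[Q]$, together with T1) and T2), forces $\mu\big(\bigcup_j Q_j\big)\le(1-\gamma)\mu(Q)$ for some admissible $\gamma\in(0,1)$; this is the sparsity. On the good set $Q\setminus\bigcup_j Q_j$, all three thresholds are respected, and a telescoping estimate along the ancestor chain of a point $x$, combined with T1) to pass from $T(f\cdot\ZI_{[Q]})$ to $Tf$ and T2) to convert the resulting oscillation into a pointwise value, yields
\begin{equation*}
\|Tf(x)\|_\ZV\lesssim C(T)\sum_{Q\in\ZF,\,Q\ni x}\langle f\rangle_{[Q]}^*\quad\text{for a.e. }x\in B'.
\end{equation*}

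The remaining step is to repackage this ancestor-sum into the exact form $\sum_{G\in\ZA}\langle f\rangle_G\,\ZI_{\bar G}(x)$ of \e{y51}, and at the same time to produce the exponential counting-function bound \e{y53} for the martingale-type sub-family $\bar\ZA\Subset\ZA$. This is precisely the role of \pro{P11}: being a purely set-theoretic statement on ball collections, it converts a stopping tree like $\ZF$ into the required pair $(\ZA,\bar\ZA)$ with \e{y52} and \e{y53} built in, the exponential decay arising from the geometric mass loss $(1-\gamma)$ at each stopping level, and the replacement of the starred average by the unstarred one being absorbed in the passage to the nested indicators $\ZI_{\bar G}$ via the martingale structure. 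Combining the output of \pro{P11} with the pointwise bound above produces \e{y51}.

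The step I expect to be the main obstacle is the calibration of the stopping threshold $M$ so that the sparsity constant $\gamma$ and the exponential rate $c$ in \e{y53} emerge as \emph{admissible} constants, independent of the tree depth. This requires balancing the weak-type exponent $1/\rho$ from T0) against the hull-ball constant $\ZK$ appearing through T1)--T2), and against the interaction of $\varrho$ and $\rho$ in the definition \e{y56} of $\langle f\rangle_B$; the doubling exponent is not available here, so one really has to exploit the presence of the hull $[Q]$ and the enlargement $Q^*$ at every stage. A secondary technical nuisance is that the stopping threshold naturally produces averages over $[Q]$ rather than over $Q$; this is handled by systematically enlarging each stopping ball to its hull so that the hypotheses of \pro{P11} are met verbatim, at the cost of only an admissible multiplicative factor.
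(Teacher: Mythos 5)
Your high-level plan — weak type plus T1)/T2) plus a function-free ball sparsification — is the right skeleton, and your instinct that $\|Tf\|_\ZV\le T^*f$ carries the bound to the truncation is correct. But the way you propose to \emph{combine} the pieces does not match how \pro{P11} actually works, and this is not cosmetic.

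\pro{P11} is not a repackaging device that you feed a stopping tree into. Its input is a family of exceptional sets $\{F_B:\,B\in\ZB\}$ defined for \emph{every} ball $B$ in the ball-basis, subject only to the measure bound $\mu(F_B)<\alpha\mu(B)$; its output is the sparse tree $\ZS$ (built internally via \lem{L8} and the removal procedures), together with the properties a1)--a4). In the paper's proof of \trm{T7}, the operator contributes only through the definition
\begin{equation*}
F_A=\big\{x:\,\Gamma(f\cdot\ZI_{A^{**}})(x)>\ZL\lambda\,\langle f\rangle_{A^{**}}\big\},\qquad
\Gamma f=\max\{\|Tf\|_\ZV,\,T^*f,\,\ZL\MM f\},
\end{equation*}
whose measure bound is supplied by \trm{T1-1} and \trm{T6}. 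Your plan — first build a Lerner-style stopping tree $\ZF$ by maximality on each $Q$, then feed $\ZF$ into \pro{P11} — has two problems: (i) \pro{P11} cannot accept a tree; it would need $F_B$ for all $B$, not just for tree members, and (ii) what you build in the stopping step would be doing, again, exactly the combinatorics that \pro{P11} already performs internally, so the two passes are not composable as you describe. In effect you have rediscovered the input data to \pro{P11} (the smallness of the bad set via T0)) but then propose to use the proposition backwards.

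You also omit the other combinatorial ingredient: the martingale family $\bar\ZA\Subset\ZA$ in \e{y53} is not produced by \pro{P11} but by \pro{L13}, applied to the tree $\ZS$ with the good sets $E_A=A\setminus F_A$; the exponential decay in \e{y53} then falls out of the summability \e{s27} combined with the nesting $\bigcup_{G\in\ch^{n+1}(A_0)}\bar G\subset\bigcup_{G\in\ch^n(A_0)}\bar G$. Your explanation of where the exponential rate comes from (geometric mass loss per level) is morally right, but the mechanism is the nestedness of the $\bar G$'s, which your write-up does not account for. Finally, the chain estimate along ancestors is more delicate than "telescoping with T1)": one needs intermediate balls $\tilde G$ supplied by \lem{L18} (to bridge $G$ and $A$ with a controlled $\Delta$), and one needs test points $\xi_j\in\tilde A_{j+1}\setminus F_{A_j}$ — whose existence uses condition a4) of \pro{P11} — at which $T^*$ and $\MM$ are controlled; the maximal function $\MM$ has to be folded into $\Gamma$ precisely so these points can be found. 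Your sketch glosses over all of this, and the constant-calibration you flag as the main obstacle is in fact absorbed cleanly by choosing the threshold $\lambda$ (equivalently $\alpha=\lambda^{-1/\rho}$) in $F_A$ small enough; it does not require any doubling hypothesis.
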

\begin{remark}
	We could omit conditions \e{y52} and \e{y53} in the statement of the theorem and write $\ZI_G$ instead of $\ZI_{\bar G}$ in \e{y51} likewise the sparse domination of \cite{Kar3}. With additional conditions  \e{y52} and \e{y53} we may capture some exponential decay inequalities proved first for classical Calderón–Zygmund operators in \cite{Kar2} a long time before the sparse domination results appeared, as well as for the $\BO$ operators proved in a recent work \cite{Kar1}.  Note that the exponential estimate of \cite{Kar1} is for $\BO$ operators with respect to doubling ball-bases. Applying \trm{T7} we extend this result on abstract measure spaces with general ball-bases. 
\end{remark}

\subsection{Sparse domination by mean oscillations}
For certain subclass of $\BO$ operators and their maximally modulations we prove a new type of sparse domination and exponential decay inequalities, involving mean oscillation instead of fractional means.   Using these results we obtain boundedness of maximally modulated Calder\'{o}n-Zygmund and some other operators on spaces $\BMO$. Note that dominations by fractional mean sparse operators of type \e{y51} can not be applied in $\BMO$-norm inequalities, since $\BMO$ spaces are not Banach lattices. 

Hence, we will consider a measure space $(X,\mu)$ and $\BO$ operators $T:L^r(X,\ZU)\to L^0(X,\ZV)$ under the following restrictions
\begin{enumerate}
	\item [R1)]ball-basis $\ZB$ is doubling,
	\item [R2)] $T$ is linear,
	\item [R3)] $T$ satisfies T0)-condition with parameters $\varrho=\rho=1/r$,
	\item [R4)] there exists a constant $\ZL_1(T)$ such that for any ball $B\in \ZB$ we have 
	\begin{equation}\label{y83}
		\OSC_B\big(T (f\cdot \ZI_{X\setminus B^*})\big) \le \ZL_1(T)\sup_{A\in \ZB:A\supset B}\log^{-1}\left(1+\frac{\mu(A)}{\mu(B)}\right)\langle f\rangle_{A}
	\end{equation}
	where $\langle f\rangle_A $ in \e{y57} and \e{y83} are considered with parameters $\varrho=\rho=1/r$, i.e.
	\begin{equation}
		\langle f\rangle_A =\left(\frac{1}{\mu(A)}\int_A\|f\|_\ZU^r\right)^{1/r},
	\end{equation}
	\item [R5)] for any ball $B\in \ZB$ and a number $\varepsilon>0$ there exists a ball $B'\supset B$ such that for any ball $B''\supset B'$ it holds the inequality $\OSC_B\big(T (\ZI_{B''})\big)\le \varepsilon$.
\end{enumerate}
\begin{definition}
	We say a BO operator is of restricted type if it satisfies R1)-R5) (in particular the ball-basis is doubling). A family of restricted type operators $\{T_\alpha\}$ is said to be uniformly restricted if conditions R3)-R5) are uniformly satisfied.
\end{definition}
\begin{remark}\label{R4}
	Notice that \e{y83} is a stronger version of T1)-condition, because of the additional logarithmic factor on the right hand side of \e{y83}. 
\end{remark}
We prove a sparse domination for the maximally modulations of operators satisfying R1)-R5). To state the result, for $f\in L^r(X,\ZU)$ and a ball $B\in \ZB$ define the mean oscillation analogues of quantities \e{y56} corresponding to parameters $\varrho=\rho=1/r$, setting 
\begin{align}	
	&f_B=\frac{1}{\mu(B)}\int_Bf,\\
	&\langle f\rangle_{\#,B}=\left(\frac{1}{\mu(B)}\int_B\|f-f_B\|_\ZU^r\right)^{1/r},\\
	& \langle f\rangle_{\#, B}^*=\sup_{A\in \ZB:A\supset B}\langle f\rangle_{\#,A}.
\end{align}
\begin{theorem}\label{T8}
Let a ball-basis $\ZB$ be doubling, $\{T_\alpha\}$ be a family of uniformly restricted $\BO$ operators and let $Tf=\sup_\alpha\|T_\alpha f\|_\ZV$. If
\begin{equation}\label{y84}
	C(T)=\ZL_0(T)+\sup_\alpha\ZL_1(T_\alpha)<\infty,
\end{equation}
then for any $f\in L^r(X,\ZU)$ and a ball $B$  there exist a ball $B'\supset B$, a sparse-collection of balls $\ZA$ and a martingale-collection of measurable sets $\bar \ZA\Subset \ZA$ satisfying \e{y52}, \e{y53} such that
	\begin{equation}\label{y94}
	|Tf(x)-m(B)|\lesssim C(T)\sum_{G\in \ZA}\langle f\rangle_{\#,G}^*\ZI_{\bar G}(x), \quad x\in B,
\end{equation}		
where $m(B)=m_{T(f)}(B)$ is a median of the real valued function $T(f)$ on $B$ (see Definition \ref{D2}).
\end{theorem}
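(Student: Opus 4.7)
The plan is to adapt the stopping-time architecture of \trm{T7} by replacing the fractional mean $\langle f \rangle_G$ on each stopping ball with the mean oscillation $\langle f \rangle_{\#, G}^*$. \pro{P11} remains the outer engine: one applies it to sparsify the ``bad'' set, where $|Tf(x) - m(B)|$ is too large relative to $C(T) \, \langle f \rangle_{\#, B}^*$, into a collection $\ZA$ of balls with the claimed sparse structure and its martingale refinement $\bar \ZA$. The new content is a local oscillation estimate that controls $|Tf - m(G)|$ on each intermediate ball $G$ by $\langle f \rangle_{\#,G}^*$, and this is where conditions R1)--R5) and linearity interact.

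Fix $f$ and $B$. Using R5 applied uniformly in $\alpha$ together with R1, first select $B' \supset B$ with $\mu(B') \lesssim \mu(B)$ such that $\sup_\alpha \OSC_B(T_\alpha(\ZI_{B''})) \le \varepsilon$ for every $B'' \supset B'$. The nonlocal tail $f \, \ZI_{X \setminus B'}$ may be estimated via R4 and absorbed into the top-level summand, so one reduces to $\supp f \subset B'$. For each $G \subset B'$, linearity R2 gives $T_\alpha f = T_\alpha(f \, \ZI_{G^*}) + T_\alpha(f \, \ZI_{X \setminus G^*})$. The local piece is handled via the weak-type inequality R3 applied to $(f - f_G) \, \ZI_{G^*}$, yielding $\langle f \rangle_{\#, G}$ up to a constant $f_G$ absorbed into the median. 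The nonlocal piece is the crux: for each enclosing $A \supset G^*$, write
\[
f \, \ZI_{X \setminus G^*} = (f - f_A) \, \ZI_{X \setminus G^*} + f_A \, \ZI_{X \setminus G^*},
\]
and apply R4 to the first summand. This produces an oscillation bound on $G$ of order $\log^{-1}(1 + \mu(A)/\mu(G)) \, \langle f - f_A \rangle_A$; telescoping along a dyadic chain of enclosing balls (furnished by R1) and summing with the logarithmic weight yields $\lesssim \langle f \rangle_{\#, G}^*$. The remaining term $f_A \, \ZI_{X \setminus G^*} = f_A \, \ZI_{B''} - f_A \, \ZI_{G^*}$ for sufficiently large $B''$ is handled by R5 (extended coordinate-wise for $\ZU$-valued constants $f_A$) together with a further use of R3.

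Once the local bound is in hand, the iteration proceeds as in \trm{T7}. At each scale $G$, the set where $|Tf - m(G)|$ exceeds a large multiple of $C(T) \, \langle f \rangle_{\#, G}^*$ has measure at most a fixed fraction of $\mu(G)$, by the local bound together with the weak-type R3 and Chebyshev. \pro{P11} then sparsifies the union of such sets across scales into $\ZA$ satisfying \e{y52} and \e{y53}, with a martingale refinement $\bar \ZA \Subset \ZA$. For fixed $x \in B$, the left-hand side of \e{y94} is obtained by telescoping medians along a chain $B = G_0 \supsetneq G_1 \supsetneq \cdots$ of nested elements of $\bar \ZA$ containing $x$: each increment $|m(G_k) - m(G_{k-1})|$ is controlled by the local bound applied to $Tf - m(G_{k-1})$, giving $\lesssim C(T) \, \langle f \rangle_{\#, G_{k-1}}^*$, which summed over $k$ reproduces the right-hand side of \e{y94}.

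The main obstacle is the telescoping sum in the nonlocal piece above: verifying that the logarithmic improvement in R4 exactly balances the accumulation of oscillation contributions along the chain of enclosing balls $A \supset G^*$, and that the auxiliary constant-type terms produced by the splittings through R2 and R5 can be absorbed without damage in the vector-valued setting. The careful bookkeeping -- coordinate-wise treatment of $\ZU$-valued constants under R5, the uniform handling over $\alpha$ of $\{T_\alpha\}$ via \e{y84}, and the median-versus-mean comparison -- is where R1)--R5) together with linearity must all cooperate.
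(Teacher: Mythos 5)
Your proposal reconstructs, in merged form, the two ingredients the paper keeps separate, and this costs you both clarity and correctness in one place. The paper's own proof of Theorem \ref{T8} is essentially two lines: \pro{T3} is applied to the \emph{function} $g = T(f)$ with $A_0 = B$, which produces the sparse family $\ZA$, the martingale refinement $\bar\ZA$, the containment \e{y52}, the exponential tail \e{y53}, and the pointwise inequality
\begin{equation}
|Tf(x) - m_{T(f)}(B)| \lesssim \sum_{G \in \ZA}\OSC_{G,\beta}(T(f))\,\ZI_{\bar G}(x),\quad x\in B;
\end{equation}
then \trm{T5} converts each $\OSC_{G,\beta}(T(f))$ into $C(T)\langle f\rangle^*_{\#,G}$. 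All of the operator-theoretic content you describe — the cutoff via R5, the $\log^{-1}$ gain in R4 balancing the logarithmic drift $\|f_A - f_B\|_\ZU \lesssim \log(1+\mu(A)/\mu(B))\langle f\rangle^*_{\#,B}$ from \lem{L10}, the weak-$L^r$ step via R3, the handling of constants — is already the content of \trm{T5}, while \pro{T3} is pure function theory (a Lerner--Nazarov type domination of $|g - m_g(A_0)|$ by local $\beta$-oscillations, proved via \pro{P11} applied to the sets $F_A = [A]\setminus E_A$ of large oscillation of $g$). Your interleaved stopping-time argument re-derives both lemmas inline; it is correct in spirit and correctly identifies the technical pressure points, but it misses the factorization that makes Theorem \ref{T8} an immediate corollary.

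There is also a concrete error in your setup. You assert that R5, together with R1, yields $B' \supset B$ with $\mu(B') \lesssim \mu(B)$ such that $\sup_\alpha\OSC_B(T_\alpha(\ZI_{B''})) \le \varepsilon$ for every $B'' \supset B'$, and you then reduce to $\supp f \subset B'$. Condition R5 gives no control whatsoever on $\mu(B')$, and in fact the cutoff ball must in general be taken far larger than $B$ — in the paper's proof of \trm{T5} it is a ball $G$ from the exhausting sequence of \lem{L5} chosen so that $\langle f\rangle^*_{G} < \delta$, which forces $\mu(G)$ to be large (otherwise the tail $T_\alpha(f\,\ZI_{X\setminus G})$ cannot be made small). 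The ball with $\mu(B')\lesssim\mu(B)$ in the statement of \trm{T8} (condition \e{y52}) is a \emph{different} object: it is $B^* = A_0^*$ from \pro{T3} (condition \e{y89}), the container of the sparse family $\ZA$, not the R5 cutoff. Conflating these two roles breaks the reduction step: with $\mu(B')\lesssim\mu(B)$ you cannot make the exterior tail contribution arbitrarily small, which is needed before the local oscillation estimate kicks in.
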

The proof of \trm{T8} is based on the following inequality, where the $\beta$-oscillation (see \e{y100} for the definition) of a $\BO$ operator is estimated by a truncated sharp function.
\begin{theorem}\label{T5}
	Let $1\le r<\infty$ and $\ZB$ be a doubling ball-basis. If a family of $\BO$ operators $\{T_\alpha\}$ satisfies R2)-R5) and the corresponding maximal operator $Tf=\sup_\alpha\|T_\alpha (f)\|_\ZV$ satisfies \e{y84}, then the bound
	\begin{equation}\label{y30}
		\OSC_{B,\beta}\,T( f)\lesssim \left(\ZL_0(T)(1-\beta)^{-1/r}+\sup_\alpha \ZL_1(T_\alpha)\right) \langle f\rangle^*_{\#,B}.
	\end{equation}
	holds for any $f\in L^r(X,\ZU)$, a ball $B\in \ZB$ and a number $0<\beta<1$.
\end{theorem}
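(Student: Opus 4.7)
The plan is to split $T_\alpha f$ by linearity (R2) into an ``inner'' part handled by the weak-type T0), an ``outer'' part handled by the log-improved localization R4), and a ``constant'' remainder handled by R5), and then to match the log-gain in R4) against the logarithmic BMO growth of nested means so that fractional means are upgraded to sharp means.

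Because $Tf=\sup_\alpha\|T_\alpha f\|_\ZV$ is a pointwise supremum of real-valued functions, the reverse triangle inequality gives $|Tf(x)-Tf(y)|\le \sup_\alpha\|T_\alpha f(x)-T_\alpha f(y)\|_\ZV$, which reduces the task to controlling the right-hand side in a $\beta$-measure sense, uniformly in $\alpha$ (uniformity being exactly the ``uniformly restricted'' hypothesis). By truncation I may assume $\supp f\subset B_N$ for a ball $B_N$. Use R5) to pick such a $B_N\supset B^*$ so large that $\OSC_B T_\alpha(\ZI_{B_N})\le \varepsilon$ uniformly in $\alpha$, with $\varepsilon$ chosen so that $\|f_{B^*}\|_\ZU\,\varepsilon\le\langle f\rangle^*_{\#,B}$ (possible since $\|f_{B^*}\|_\ZU$ is fixed). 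Writing
\[
T_\alpha f = T_\alpha\big((f-f_{B^*})\ZI_{B^*}\big)+T_\alpha\big((f-f_{B^*})\ZI_{B_N\setminus B^*}\big)+T_\alpha\big(f_{B^*}\,\ZI_{B_N}\big)=:I_\alpha+II_\alpha+III_\alpha
\]
is justified by R2), and the three summands are each in $L^r(X,\ZU)$.

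For $I_\alpha$ the weak-type T0) from R3) (applied to the maximal operator $T$ itself, whose $\ZL_0$ is finite by \e{y84}) with $\rho=\varrho=1/r$ gives $\langle f-f_{B^*}\rangle_{B^*}=\langle f\rangle_{\#,B^*}\le\langle f\rangle^*_{\#,B}$ and $\mu(B^*)\lesssim\mu(B)$; hence $\sup_\alpha\|I_\alpha\|_\ZV\lesssim\ZL_0(T)(1-\beta)^{-1/r}\langle f\rangle^*_{\#,B}$ outside a set of measure $\le(1-\beta)\mu(B)/2$. For $II_\alpha$ the function $(f-f_{B^*})\ZI_{B_N\setminus B^*}$ is supported in $X\setminus B^*$, so R4) yields
\[
\OSC_B II_\alpha\le \ZL_1(T_\alpha)\sup_{A\supset B}\log^{-1}\!\Big(1+\tfrac{\mu(A)}{\mu(B)}\Big)\langle f-f_{B^*}\rangle_A.
\]
The BMO-chain lemma in doubling ball-bases, $\|f_A-f_{B^*}\|_\ZU\lesssim\log(2+\mu(A)/\mu(B))\langle f\rangle^*_{\#,B}$ for $A\supset B$ (obtained by concatenating $O(\log(\mu(A)/\mu(B)))$ doubling steps from R1)), gives $\langle f-f_{B^*}\rangle_A\le\langle f\rangle_{\#,A}+\|f_A-f_{B^*}\|_\ZU\lesssim(1+\log(\cdots))\langle f\rangle^*_{\#,B}$; multiplying by $\log^{-1}(1+\mu(A)/\mu(B))$ produces a uniform bound $\lesssim\langle f\rangle^*_{\#,B}$, whence $\OSC_B II_\alpha\lesssim\ZL_1(T_\alpha)\langle f\rangle^*_{\#,B}$. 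For $III_\alpha$, scaling by the vector constant and R5) give $\OSC_B III_\alpha\le\|f_{B^*}\|_\ZU\,\OSC_B T_\alpha(\ZI_{B_N})\le\|f_{B^*}\|_\ZU\,\varepsilon\le\langle f\rangle^*_{\#,B}$.

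Assembling the three bounds (and using $\|I_\alpha(x)-I_\alpha(y)\|_\ZV\le\|I_\alpha(x)\|_\ZV+\|I_\alpha(y)\|_\ZV$ to pass from values to oscillation for the weak-type piece), for $x,y\in B$ outside a set of measure $\le(1-\beta)\mu(B)$ we obtain $|Tf(x)-Tf(y)|\lesssim(\ZL_0(T)(1-\beta)^{-1/r}+\sup_\alpha\ZL_1(T_\alpha))\langle f\rangle^*_{\#,B}$, which is exactly \e{y30}. The main obstacle is the precise matching of the $\log^{-1}$-factor in R4) with the logarithmic BMO growth of $\|f_A-f_{B^*}\|_\ZU$: this is the exact mechanism converting the fractional mean that R4) produces into the sharp mean. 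A subsidiary but essential point is that R5) is the right hypothesis to neutralize the constant contribution $T_\alpha(f_{B^*}\ZI_{B_N})$, since $\varepsilon$ can be made arbitrarily small for any prescribed $\|f_{B^*}\|_\ZU$ by enlarging $B_N$.
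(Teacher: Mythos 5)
Your decomposition and mechanisms mirror the paper's proof of \trm{T5}: an inner piece controlled by the weak-type bound T0), an outer piece controlled by R4) matched against the logarithmic growth of nested means (Lemma \ref{L10}), and a constant piece neutralised by R5), together with the reverse-triangle reduction to a fixed $\alpha$. Your centering at $f_{B^*}$ rather than $f_B$ is a minor cosmetic simplification that sidesteps one use of Lemma \ref{L6}. The key mechanism you identify --- that the $\log^{-1}$ gain built into R4) exactly cancels the $\log$ BMO growth of $\|f_A-f_{B^*}\|_{\ZU}$ --- is also the heart of the paper's argument.

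The genuine gap is the opening reduction ``by truncation I may assume $\supp f\subset B_N$.'' For $f\in L^r(X,\ZU)$ with unbounded support this is not a free reduction. R5) produces a ball $B'$ such that every $B''\supset B'$ satisfies $\OSC_B T_\alpha(\ZI_{B''})\le\varepsilon$; it does not produce a ball that also contains $\supp f$, and no such ball need exist. To close the argument you need a fourth term --- the genuine tail $f\cdot\ZI_{X\setminus B_N}$ --- and you must show its oscillation on $B$ can be made negligible. The paper does precisely this: it sets $f_0=f\cdot\ZI_{X\setminus G}$, where $G=G_n$ is drawn from the exhaustion of $X$ by balls (Lemma \ref{L5}) and is chosen so large that \emph{simultaneously} $G\supset B'^*$ (so R5) applies with $B''=G$) and $\langle f\rangle^*_{G}<\delta$ (so the $L^r$-tail is small), and then bounds $\OSC_B T_\alpha(f_0)\le\ZL_1(T_\alpha)\langle f\rangle^*_{G}<\delta\sup_\alpha\ZL_1(T_\alpha)$ via T1)/R4). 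Simply invoking ``truncation'' hides this step; you would need either to carry out this tail estimate explicitly, or to argue that passing $f\ZI_{B_N}\to f$ in $L^r$ yields convergence at the $\OSC_{B,\beta}$ level --- which is not automatic, since the weak-type bound T0) is localised and does not by itself give the required measure-convergence of $T(f\ZI_{X\setminus B_N})$ on $B$.
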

	\begin{remark}
		If $\mu(X)<\infty$ and $T(\ZI_X)$ is a constant function, then $T$ trivially satisfies condition R5) with $B'=X$. We just note that $\mu(X)<\infty$ implies $X\in\ZB$ (see \lem{L0}). Thus one can state simplified versions of Theorems \ref{T8} and \ref{T5} if $\mu (X)<\infty$.  Namely, we can suppose  $\{T_\alpha\}$ satisfies the restrictions R2)-R4) uniformly, as well as $T_\alpha(\ZI_X)\equiv c_\alpha$, $\|c_\alpha\|_\ZV\le \ZL_1(T_\alpha)$ instead of R5).
	\end{remark}
\begin{remark}
	The statement of \trm{T5} and so \trm{T8} is true also when $T$ is simply a restricted type $\BO$ operator. One just need replace the right hand side of \e{y94} by 	$\|Tf(x)-m(B)\|_\ZV$.
\end{remark}
Applying \e{y94}, we obtain new type of exponential inequality (\cor{C5}), which in turn implies the boundedness of restricted type $\BO$ operators and their maximally modulations on spaces $\BMO$ (\cor{C4}). The fact that ordinary Calderón–Zyg\-mund operators boundedly map $L^\infty$ to $\BMO$ was independently obtained by Peetre \cite{Pee}, Spanne \cite{Spa} and Stein \cite {Ste}. Peetre \cite{Pee} also observed that translation-invariant Calderón–Zygmund operators actually map $\BMO$ to itself. For a wider subclass of $\BO$ operators it can be easily proved the boundedness from $L^\infty $ to $\BMO$. 

\subsection{General maximal functions on $\BMO$ }
The technique used in the proof of \e{y94} and related corollaries can be applied also to maximal functions of general type. Here we use notations \e{y56} with parameters $\varrho=\rho=1/r$, $r\ge 1$ and suppose our ball-basis is doubling.
Define the volume distance between a point $x\in X$ and a ball $B$ by
\begin{equation}
	d(x,B)=\inf_{A\in \ZB:\, A\supset B\cup \{x\}}\mu(A).
\end{equation}
\begin{definition}
	Let $\omega:(0,1)\to(0,1)$ be a modulus of continuity, that is a monotone increasing and subadditive function. A family of non-negative real valued functions $\phi=\{\phi_B:\, B
	\in \ZB\}\subset L^1(X)$ is said to be $\omega$-regular if 
	\begin{enumerate}
		\item $\|\phi_B\|_{L^1(X)}=1$,
		\item there is an increasing function $\gamma:[1,\infty)\to [1,\infty)$ such that
		\begin{equation}
			\phi_B(x)\le \gamma\left(\frac{\mu(A)}{\mu(B)}\right)\phi_A(x)
		\end{equation}
		for every two balls $B\subset A$.
		\item there are positive constants $c_1,c_2$ such that for any ball $B$ we have the inequalities
		\begin{equation}\label{y4}
			c_1\cdot \frac{\ZI_B(x)}{\mu(B)}\le\phi_B(x)\le c_2\cdot \omega\left(\frac{\mu(B)}{d(x,B)}\right)\frac{1}{d(x,B)}, \quad x\in X .
		\end{equation}
	\end{enumerate}
\end{definition}
\begin{definition}
	A family of balls $\ZG(x)\subset \ZB$ is said to be complete at a point $x\in X$ if 
	\begin{enumerate}
		\item $x\in B$ for any $B\in \ZG(x)$, 
		\item  for any ball $A\ni x$ (not necessarily from $\ZG(x)$) there is a ball $\bar A\in \ZG(x)$ with $\bar A\supset A,\quad \mu(\bar A)\le \eta \mu(A)$,
	\end{enumerate}
	where $\eta \ge 1$ is a constant.  
\end{definition}
Let $\ZG=\cup_{x\in X}\ZG(x)$, where each $\ZG(x)$ is a family of balls complete at $x$ and let $\phi=\{\phi_B:\, B\in \ZB\}$ be a $\omega$-regular family of functions on $X$.  For $f\in L^r(X,\ZU)$ and a measurable set $B$ we define 
\begin{equation}
	\langle f\rangle_{\phi_B}=\int_X\|f\|_\ZU\phi_B,
\end{equation}
and consider the maximal function
\begin{equation}\label{y6}
	\MM^{\phi,\ZG} f(x)=\sup_{B\in \ZB(x)}\langle f\rangle_{\phi_B},
\end{equation}
which is an operator acting from $L^r(X,\ZU)$ to $L^0(X,\ZR)$.
\begin{theorem}\label{T2}
	Let $\ZB$ be a doubling ball-basis. If a modulus of continuity satisfies
	\begin{equation}\label{y7}
		\|\omega \|=1+\int_0^1\frac{\omega(t)\log(1/t)}{t}dt<\infty,
	\end{equation}
	then for any function $f\in L^r(X,\ZU)$ and a ball $B$
	\begin{equation}\label{2}
		\OSC_{B,\alpha}(\MM^{\phi,\ZG} (f))\lesssim \|\omega\|(1-\alpha)^{-1/r}\langle f\rangle^*_{\#,B},\quad B\in \ZB,
	\end{equation}
	for any $0<\alpha<1$.
\end{theorem}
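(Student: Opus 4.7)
My plan is to mimic the proof of Theorem~\ref{T5} for the sublinear operator $\MM^{\phi,\ZG}$: the key manipulations in that argument only use $\MM^{\phi,\ZG}(f+g)\le\MM^{\phi,\ZG}(f)+\MM^{\phi,\ZG}(g)$, so linearity R2) can be replaced by sublinearity. Thus I would verify the sublinear analogues of R3)--R5) for $\MM^{\phi,\ZG}$ and then feed them into the same scheme. The weak-type R3) on each ball follows from a Vitali-type covering argument based on property B4) together with the lower bound $\phi_B\ge c_1\ZI_B/\mu(B)$ of \e{y4}. Condition R5) is routine: given $\varepsilon>0$, the upper bound in \e{y4} forces $\phi_{B''}$ to be nearly $1/\mu(B'')$ on $B$ once $B''\supset B'\supset B$ is chosen large enough, whence $\MM^{\phi,\ZG}(\ZI_{B''})|_B$ is essentially the constant $1$.

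The central task is the R4)-analogue
$$\OSC_B\bigl(\MM^{\phi,\ZG}(f\ZI_{X\setminus B^*})\bigr)\lesssim \|\omega\|\cdot\sup_{A\in\ZB,\,A\supset B}\log^{-1}\!\Bigl(1+\tfrac{\mu(A)}{\mu(B)}\Bigr)\langle f\rangle_A.$$
The crucial structural fact is that for $A\in\ZG(x)$ with $x\in B$ and $A\not\subset B^*$, property B4) forces $\mu(A)>2\mu(B)$ and therefore $B\subset[A]$: the hull $[A]$ already contains $B$, so $\int\|f\|_\ZU\phi_{[A]}$ does not depend on the base point in $B$. Combined with the monotonicity $\phi_A\le\gamma(\ZK)\phi_{[A]}$ (property~(2) of $\omega$-regularity) and the completeness of $\ZG(y)$ for $y\in B$, we can transfer the supremum from $\ZG(x)$ to $\ZG(y)$ up to admissible constants; the contribution of the ``large'' balls $A\not\subset B^*$ is therefore essentially constant on $B$. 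For the ``small'' regime $A\subset B^*$ the support of $f\ZI_{X\setminus B^*}$ sits outside $A$, and the upper decay bound in \e{y4} gives $\phi_A(z)\lesssim\omega(\mu(A)/d(z,A))/d(z,A)$. Splitting the integral into dyadic shells $B^{(k+1)}\setminus B^{(k)}$ of a doubling chain $B^*=B^{(0)}\subsetneq B^{(1)}\subsetneq\cdots$ with $\mu(B^{(k+1)})\le\eta\mu(B^{(k)})$, the $k$-th shell contributes at most $\omega(\eta^{-k})\langle f\rangle_{B^{(k+1)}}$.

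Summing the shell contributions is the main obstacle. Telescoping
$\langle f\rangle_{B^{(k+1)}}\le\langle f\rangle_{B^*}+\sum_{j=0}^{k}\|f_{B^{(j)}}-f_{B^{(j+1)}}\|_\ZU+\langle f-f_{B^{(k+1)}}\rangle_{B^{(k+1)}}$
introduces a per-shell factor $k\sim\log(\mu(B^{(k+1)})/\mu(B))$ multiplied by sharp-function averages $\langle f\rangle_{\#,B^{(j+1)}}\le\langle f\rangle_{\#,B}^*$, using the vector-valued inequality $\|f_{B^{(j)}}-f_{B^{(j+1)}}\|_\ZU\lesssim\langle f\rangle_{\#,B^{(j+1)}}$. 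Paired with $\omega(\eta^{-k})$ this factor is summable precisely under the log-Dini condition \e{y7}, yielding both the $\|\omega\|$ prefactor and the logarithmic denominator $\log(1+\mu(A)/\mu(B))$ required on the right of the R4)-analogue. Once R3)--R5) are in hand, the scheme of Theorem~\ref{T5}---a stopping-time decomposition of $B$ into maximal subballs where the truncated maximal function exceeds a threshold, together with the weak-type bound applied on each stopping ball---runs verbatim, producing the $(1-\alpha)^{-1/r}$ factor from the weak threshold and concluding \e{2}.
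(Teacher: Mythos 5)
Your plan to recast $\MM^{\phi,\ZG}$ as a $\BO$ operator verifying R3)--R5) and then ``run the scheme of Theorem~\ref{T5} verbatim'' has a genuine gap, and the gap is precisely where the paper's proof departs from that scheme. Two closely related obstacles appear.

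First, the opening claim that the argument of Theorem~\ref{T5} ``only uses'' subadditivity is not correct. In that proof, the bound on $|Tf(x)-Tf(x')|$ for $x,x'\in E_B$ is obtained by selecting an index $\alpha_0$ nearly achieving the sup at $x$ and then \emph{decomposing the linear operator}: $T_{\alpha_0}f = T_{\alpha_0}f_0 + T_{\alpha_0}f_1 + T_{\alpha_0}f_2 + T_{\alpha_0}f_3$, so that the difference breaks into the four differences $T_{\alpha_0}f_i(x)-T_{\alpha_0}f_i(x')$, each controlled by oscillation bounds (R4/R5) or the weak-type set $E_B$. If you replace $T_{\alpha_0}$ by a sublinear operator, the decomposition produces sums $\MM^{\phi,\ZG}(f_i)(x)+\MM^{\phi,\ZG}(f_i)(x')$ rather than differences, and, e.g., $\MM^{\phi,\ZG}(f_2)(x)$ is not small in size even when its oscillation is. Linearity (R2) in the inner layer is not a cosmetic hypothesis in \trm{T5}.

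Second, $\MM^{\phi,\ZG}$ is not of the form $\sup_\alpha\|T_\alpha\|_\ZV$ for a \emph{fixed} family of linear operators, which is what the $\BO$/maximal-modulation framework requires. The supremum is over a point-dependent family $\ZG(x)$. The natural encodings fail: if one sets $T_A f(x)=\langle f\rangle_{\phi_A}\ZI_A(x)$, each $T_A$ is linear but violates R4) badly (its oscillation on a ball $B$ that meets $\partial A$ equals the full value $\langle f\rangle_{\phi_A}$); if instead one takes $T_A f\equiv\langle f\rangle_{\phi_A}$ constant in $x$, then $\MM^{\phi,\ZG}$ is not the unconstrained sup over $A$, and the key inequality ``$\|T_{\alpha_0}f(x')\|\le Tf(x')$'' used to begin the T5 estimate can fail when $A_0\in\ZG(x)\setminus\ZG(x')$. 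You do flag the completeness transfer informally, but you then treat it as settled and claim the rest runs verbatim. It does not: closing that transfer is the main work, and it cannot be delegated to a formal R4) verification because R4) is a condition on the operators $T_\alpha$, not on the maximal function itself (proving R4) for $\MM^{\phi,\ZG}$ would already require the estimate one is trying to establish).

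The paper therefore does not invoke Theorem~\ref{T5}; it gives a direct proof of Theorem~\ref{T2}. The skeleton is indeed parallel (an exceptional set $E$ coming from a weak-$L^r$ bound for the centered maximal function $\MM$, applied to $g=(f-f_{\phi_B})\ZI_{[B]}$), but the pointwise step is specific to maximal structure: for $x,x'\in E$ with $\MM^{\phi,\ZG}f(x)\ge\MM^{\phi,\ZG}f(x')$ one picks a near-achiever $A\in\ZG(x)$ and splits on $\mu(A)\gtrless\mu(B)$; in the large-ball case the completeness of $\ZG(x')$ supplies $\bar A\in\ZG(x')$ with $\bar A\supset A$, $\mu(\bar A)\lesssim\mu(A)$, and then $\langle f\rangle_{\phi_A}-\langle f\rangle_{\phi_{\bar A}}$ is bounded by \lem{LK2} and \rem{RK1}; in the small-ball case the term reduces to $\MM g(x)\le\lambda$. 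Your proposal would need to be rewritten along these direct lines rather than as a reduction to \trm{T5}; the telescoping/shell estimate and the role of the log-Dini condition \e{y7} that you describe are substantively correct, and \lem{LK2} encapsulates exactly that computation, but the R3)--R5) then apply T5 packaging is unsound.
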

Thus we can state a corollary analogous to \trm{T8}.
\begin{corollary}\label{KC1}
	If a ball-basis $\ZB$ is doubling, 	then for any $f\in L^r(X,\ZU)$ and a ball $B$  there exist a ball $B'\supset B$, a sparse-collection of balls $\ZA$ and a martingale-collection of measurable sets $\bar \ZA\Subset \ZA$ satisfying \e{y52}, \e{y53} such that
	\begin{equation}
	|\MM^{\phi,\ZG} f(x)-m(B)|\lesssim \|\omega\|\sum_{G\in \ZA}\langle f\rangle_{\#,G}^*\ZI_{\bar G}(x), \quad x\in B,
	\end{equation}		
	where $m(B)$ is a median of the real function $\MM^{\phi,\ZG} (f)$ on $B$ (see Definition \ref{D2}).
\end{corollary}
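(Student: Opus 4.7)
The plan is to derive \cor{KC1} from \trm{T2} along the same route that leads from \trm{T5} to \trm{T8}. The key observation is that \pro{P11}, the function-free ball sparsification, handles all the combinatorics of producing the sparse family $\ZA$ and the martingale sub-family $\bar \ZA\Subset\ZA$ with \e{y52} and \e{y53}; it does not see the operator at all. Once \trm{T2} supplies a per-ball oscillation estimate of the right shape, the sparse domination drops out of the same iteration used to prove \trm{T8}.

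First I would unpack the $\alpha$-oscillation. Fixing $\alpha=1/2$ (any fixed value in $(0,1)$ works), \trm{T2} produces, for every ball $A\in\ZB$, a measurable set $F_A\subset A$ with $\mu(F_A)\ge (1-\alpha)\mu(A)$ on which
\[
|\MM^{\phi,\ZG} f(x) - m_A| \le c\,\|\omega\|\,\langle f\rangle_{\#,A}^*,
\]
where $m_A$ is a median of the real-valued function $\MM^{\phi,\ZG}(f)$ on $A$ and $c$ is admissible. This is the only ingredient that depends on the specific operator $\MM^{\phi,\ZG}$.

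Next I would apply \pro{P11} to the starting ball $B$ to manufacture a ball $B'\supset B$, a $\gamma$-sparse family $\ZA$ of balls contained in $B'$, and a martingale sub-collection $\bar \ZA\Subset\ZA$ satisfying \e{y52} and \e{y53}. Because \pro{P11} is function-free, the construction used in the proof of \trm{T8} applies verbatim when fed the localized pointwise control above. Finally, on each $\bar G$ I would telescope along the chain of ancestors $G=G_0\subsetneq G_1\subsetneq\dots\subsetneq G_k=B$ inside the martingale tree $\ZA$, applying the local estimate of the previous step at each scale and summing to obtain
\[
|\MM^{\phi,\ZG} f(x) - m(B)| \lesssim \|\omega\|\sum_{G\in\ZA}\langle f\rangle_{\#,G}^*\,\ZI_{\bar G}(x), \qquad x\in B.
\]

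The main difficulty is packaged inside \pro{P11}; given it, what remains is just the bookkeeping above, which is essentially identical to the corresponding step in the proof of \trm{T8}. Since that proof never uses any property of the operator $T$ beyond the oscillation estimate \e{y30}, replacing \e{y30} by \e{2} immediately yields \cor{KC1}, with the constant $C(T)$ of \trm{T8} replaced by $\|\omega\|$.
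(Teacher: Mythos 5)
Your proposal is correct and follows essentially the same route as the paper: the paper deduces \cor{KC1} by combining \trm{T2} with \pro{T3} (the abstract Lerner-type pointwise bound), which internally packages \pro{P11}, \pro{L13}, and the telescoping along medians; you reprove that intermediate step from \pro{P11} directly rather than citing \pro{T3}, but the mechanism is identical. One small imprecision to note: for the fixed $\alpha$ you should say $\mu(F_A)>\alpha\mu(A)$ (as in the definition of $\OSC_{B,\alpha}$), not $\ge(1-\alpha)\mu(A)$ — with $\alpha=1/2$ the two coincide, but the general formula as written is inverted.
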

\begin{corollary}\label{KC2}
	Under the assumptions of \trm{T2} the maximal function $\MM^{\phi,\ZG}$ is bounded from $\BMO(X,\ZU)$ to $\BMO(X,\ZR)$. More precisely, 
	\begin{equation}
		\|\MM^{\phi,\ZG}(f)\|_{\BMO(X,\ZR)}\lesssim \|f\|_{\BMO(X,\ZU)}
	\end{equation}
	for any function
	\begin{equation}
		f\in L^r(X,\ZU)\cap \BMO(X,\ZU).
	\end{equation} 
\end{corollary}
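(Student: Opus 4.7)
The plan is to deduce Corollary \ref{KC2} from Corollary \ref{KC1} by an integration argument, using the exponential decay of the sparse family to convert the pointwise sparse bound into an $L^1$ mean-deviation estimate on every ball.

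First, fix $f\in L^r(X,\ZU)\cap \BMO(X,\ZU)$ and a ball $B\in\ZB$. Apply Corollary \ref{KC1} to produce a sparse family $\ZA$ and a martingale family $\bar\ZA\Subset\ZA$ satisfying \eqref{y52}, \eqref{y53}, with
\[
|\MM^{\phi,\ZG}f(x)-m(B)|\lesssim \|\omega\|\sum_{G\in\ZA}\langle f\rangle^*_{\#,G}\,\ZI_{\bar G}(x),\qquad x\in B,
\]
where $m(B)$ is the median of $\MM^{\phi,\ZG}(f)$ on $B$. Since it suffices for membership in $\BMO(X,\ZR)$ to control the average deviation from \emph{some} constant, I will bound $\mu(B)^{-1}\int_B|\MM^{\phi,\ZG}f-m(B)|\,d\mu$ uniformly in $B$.

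Next, I would observe that for every ball $A\supset G$,
\[
\langle f\rangle_{\#,A}=\left(\frac{1}{\mu(A)}\int_A\|f-f_A\|_\ZU^r\right)^{1/r}\lesssim \|f\|_{\BMO(X,\ZU)}.
\]
In the classical Euclidean setting this is just the John--Nirenberg inequality; in the present doubling ball-basis context the same proof goes through, so $\langle f\rangle^*_{\#,G}\lesssim\|f\|_{\BMO(X,\ZU)}$ uniformly in $G\in\ZA$. Inserting this into the pointwise domination and integrating over $B$,
\[
\frac{1}{\mu(B)}\int_B|\MM^{\phi,\ZG}f-m(B)|\,d\mu \lesssim \|\omega\|\|f\|_{\BMO(X,\ZU)}\cdot\frac{1}{\mu(B)}\int_B\sum_{G\in\ZA}\ZI_{\bar G}(x)\,d\mu(x).
\]
Writing $N(x)=\sum_{G\in\ZA}\ZI_{\bar G}(x)$, the exponential estimate \eqref{y53} together with \eqref{y52} gives
\[
\int_X N\,d\mu=\int_0^\infty \mu\{N>\lambda\}\,d\lambda\lesssim \mu(B)\int_0^\infty e^{-c\lambda}\,d\lambda\lesssim\mu(B),
\]
so $\mu(B)^{-1}\int_B N\,d\mu\lesssim 1$, yielding $\mu(B)^{-1}\int_B|\MM^{\phi,\ZG}f-m(B)|\,d\mu\lesssim \|\omega\|\|f\|_{\BMO(X,\ZU)}$. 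Taking the supremum over $B$ and replacing the median by the average (which costs only a constant factor, by the same integral estimate) finishes the proof.

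The main obstacle I anticipate is the John--Nirenberg type passage from the $L^1$-based $\BMO$ norm to the $L^r$-means $\langle f\rangle_{\#,A}$ appearing in Corollary \ref{KC1}. Everything else is routine: the sparse pointwise bound is already in hand, and \eqref{y53} is tailor-made to absorb the sum of indicators after integration. If the John--Nirenberg inequality is not readily available in this abstract framework, one can derive it directly from Theorem \ref{T2} applied to the identity-type operator, or more economically from \eqref{y53} applied to $\MM^{\phi,\ZG}$ with $f$ replaced by $f-f_B$, extracting the $L^r$ control as a byproduct of the exponential decay.
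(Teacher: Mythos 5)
Your overall route is the same one the paper uses (it proves \cor{KC2} by mimicking the proof of \cor{C4}): combine the sparse/exponential bound with a John--Nirenberg inequality to replace the $L^r$-mean oscillations $\langle f\rangle_{\#,G}^*$ by $\|f\|_{\BMO}$, then integrate. Your integration step is sound: since by \eqref{y52} the counting function $N(x)=\sum_{G\in\ZA}\ZI_{\bar G}(x)$ is supported in a ball $B'$ with $\mu(B')\lesssim\mu(B)$, the distributional estimate \eqref{y53} gives $\int_B N\,d\mu\le\int_X N\,d\mu\lesssim\mu(B)$, and swapping the median for the average only costs a factor $2$.

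The one place you are on shaky ground is the fallback you offer for establishing John--Nirenberg. You suggest deriving it ``from Theorem \ref{T2} applied to the identity-type operator''; this does not work, because \trm{T2} is a statement specifically about $\MM^{\phi,\ZG}$ (an averaging-type maximal operator built from an $\omega$-regular family $\phi$), not a framework into which the identity can be inserted. Your second suggestion (``\eqref{y53} applied to $\MM^{\phi,\ZG}$ with $f$ replaced by $f-f_B$'') is also not a derivation of John--Nirenberg. The paper's actual source is \cor{C5} applied to the identity operator $T(f)=f$ (which is trivially a restricted $\BO$ operator), yielding the exponential inequality \eqref{x4} and hence the key bound $\MM_\# f\lesssim\|f\|_{\BMO}$ in \eqref{x5}; equivalently one may apply \pro{T3} directly to $f$ and note that $\OSC_{G,\beta}(f)\lesssim\|f\|_{\BMO}$ by a Chebyshev argument. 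Since \eqref{x5} is already established in the paper before \cor{KC2}, you should simply cite it; with that substitution your proof is complete and agrees with the intended one.
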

\begin{remark}
	In the case when $\MM^{\phi,\ZG}$ is the centered maximal operator on $\ZR^d$, its boundedness on $\BMO(\ZR^d)$ is the classical result of Bennett-DeVore-Sharpley \cite{BDS}. To the best of our knowledge this was the only known maximal function that is bounded on $\BMO$. The suggested proof of \cite{BDS} quite different and in particular it can not be applied for the non-centered maximal function on $\ZR^d$.
\end{remark}
\section{Preliminary properties of ball bases}\label{S2}
Let $\ZB$ be a ball-basis in the measure space $(X,\mu)$. From B4) condition it follows that if balls $A,B$ satisfy $A\cap B\neq\varnothing$, $\mu(A)\le 2\mu(B)$, then $A\subset B^*$. This property will be called two balls relation. The notation $B^*$ in \e{h13} can be also used for arbitrary measurable set $B$. In particular, the notation $B^{**}$ will stand for the $(B^*)^*$. We say a set $E\subset X$ is bounded if $E\subset B$ for a ball $B\in\ZB$.
\begin{lemma}\label{L0}
		If $(X,\mu)$ is a measure space equipped with a ball-basis $\ZB$ and $\mu(X)<\infty$, then $X\in \ZB$.
\end{lemma}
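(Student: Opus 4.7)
Since every ball is a measurable subset of $X$ and $\mu(X)<\infty$, the supremum
\[
s := \sup\{\mu(B) : B\in\ZB\}
\]
is finite and at most $\mu(X)$. My idea is to fix a single ball that is nearly maximal in measure, and then use the two-balls relation built into B4) to show that its hull $[B]$ swallows all of $X$.

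First I would pick a ball $B\in\ZB$ with $\mu(B) > s/2$, which exists by the definition of supremum. Next I would show $X\subset B^*$. Given any $x\in X$, fix any $y\in B$ (which exists since $\mu(B)>0$ by B1). By B2 there is a ball $C\in\ZB$ containing both $x$ and $y$. Then $C\cap B\neq\varnothing$ (it contains $y$) and $\mu(C)\le s < 2\mu(B)$, so $C$ is one of the balls entering the union defining $B^*$ in \e{h12}. Hence $x\in C\subset B^*$.

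Combining this with $B^*\subset [B]$ from \e{h12} gives $X\subset [B]$. Since $[B]\in\ZB$ is a measurable subset of $X$, we also have $[B]\subset X$, and therefore $[B]=X$. Thus $X\in\ZB$.

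The only delicate point is the strict inequality $\mu(C)<2\mu(B)$ needed to guarantee $C\subset B^*$; this is precisely why I choose $B$ with $\mu(B)>s/2$ rather than just an arbitrary ball. Everything else is a direct unpacking of the ball-basis axioms, so I do not anticipate any real obstacle beyond being careful to distinguish the measurable set $B^*$ from the ball $[B]$ that contains it.
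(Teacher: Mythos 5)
Your proof is correct and takes essentially the same approach as the paper: pick a ball $B$ whose measure exceeds half the (finite) supremum, then invoke B2) and the two-balls relation from B4) to conclude $X\subset B^*\subset [B]$, hence $X=[B]\in\ZB$. The only cosmetic difference is that the paper takes the supremum over the balls containing a fixed point $x_0$ rather than over all balls, but this does not change the substance of the argument.
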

\begin{proof}
Fix an arbitrary point $x_0\in X$ and let $\ZA\subset \ZB$ be the family of balls containing $x_0$. By B2) condition we can write $X=\cup_{A\in \ZA}A$. Then there exists a $B\in \ZA$ such that $\mu(B)>\frac{1}{2} \sup_{A\in \ZA}\mu(A)$ and using B4) condition we obtain
\begin{equation*}
X=\cup_{A\in \ZA}A\subset B^*\subset [B].
\end{equation*}
Thus $X=[B]\in \ZB$.
\end{proof}
\begin{lemma}\label{L5}
	Let $(X,\mu)$ be a measure space equipped with a ball-basis $\ZB$. Then there exists a sequence of balls $G_1\subset G_2\subset \ldots\subset G_n\subset \ldots$ such that $X=\cup_kG_k$. Moreover, for any ball $B$ there is a ball $G_n\supset B$.
\end{lemma}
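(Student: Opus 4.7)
The plan is to handle the cases $\mu(X)<\infty$ and $\mu(X)=\infty$ separately and, in the infinite case, to build $\{G_n\}$ greedily by taking hulls (via B4)) of increasingly large balls through a fixed point $x_0$. When $\mu(X)<\infty$, \lem{L0} gives $X\in\ZB$, and the constant sequence $G_n:=X$ has all the required properties. So from now on I assume $\mu(X)=\infty$ and fix $x_0\in X$.

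The first step is to establish that $M:=\sup\{\mu(B):B\in\ZB,\ x_0\in B\}=\infty$. If not, pick $B_0\ni x_0$ with $\mu(B_0)\ge M/2$; then for every $B\ni x_0$ one has $\mu(B)\le M\le 2\mu(B_0)$ and $B\cap B_0\ni x_0$, so the two-ball relation (the consequence of B4) recorded in \sect{S2}) gives $B\subset B_0^*\subset[B_0]$. Since $X=\bigcup\{B\in\ZB:x_0\in B\}$ by B2), this would force $X\subset[B_0]$ and $\mu(X)\le\ZK\mu(B_0)<\infty$, contradicting our assumption.

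I then define the sequence inductively. Set $G_1:=[A_1]$ for some $A_1\ni x_0$. Given a ball $G_n\ni x_0$, the previous step lets me pick $A_{n+1}\in\ZB$ with $A_{n+1}\ni x_0$ and $\mu(A_{n+1})\ge 2\mu([G_n])$; set $G_{n+1}:=[A_{n+1}]$. The two-ball relation applied to $[G_n]$ and $A_{n+1}$ (they meet at $x_0$, and $\mu([G_n])\le\mu(A_{n+1})/2\le 2\mu(A_{n+1})$) yields $[G_n]\subset A_{n+1}^*\subset[A_{n+1}]=G_{n+1}$; in particular $G_n\subset[G_n]\subset G_{n+1}$, and $\mu(G_{n+1})\ge\mu(A_{n+1})\ge 2\mu([G_n])\ge 2\mu(G_n)$, so $\mu(G_n)\to\infty$.

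Finally I verify that every ball $B\in\ZB$ lies in some $G_m$, which in particular gives $X=\bigcup_n G_n$ by applying the conclusion to a ball joining $x_0$ and an arbitrary point of $X$ (produced by B2)). Given $B$, pick $z\in B$ and, by B2), $C\in\ZB$ with $x_0,z\in C$. For $n$ large enough that $\mu(C)\le 2\mu(G_n)$, the two-ball relation (with $C\cap G_n\ni x_0$) gives $C\subset G_n^*\subset[G_n]$, so $z\in[G_n]$; for $n$ large enough that also $\mu(B)\le 2\mu([G_n])$, a second application (with $B\cap[G_n]\ni z$) gives $B\subset[G_n]^*\subset[[G_n]]$. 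Then for $m\ge n$ large enough that $\mu([[G_n]])\le\ZK^{2}\mu(G_n)\le 2\mu(G_m)$, the two-ball relation applied to $[[G_n]]$ and $G_m$ (both contain $x_0$) yields $[[G_n]]\subset G_m^*\subset[G_m]\subset G_{m+1}$, the last inclusion being built into the inductive step. Hence $B\subset G_{m+1}$. The main obstacle is precisely this hull bookkeeping: the construction must be arranged so that $G_{n+1}$ absorbs the hull $[G_n]$ and not just $G_n$ itself, which is what allows the two hull layers accumulated in the inclusion $B\subset[[G_n]]$ to be swallowed into a single later term of the sequence.
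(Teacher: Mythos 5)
Your proof is correct and follows essentially the same route as the paper: a greedy construction of an increasing sequence of balls through a fixed point $x_0$ (obtained via B4) and the hull operation), followed by two-ball relation arguments to show any given ball is swallowed by some term. The only cosmetic differences are that you split into the cases $\mu(X)<\infty$ and $\mu(X)=\infty$ (invoking \lem{L0}), whereas the paper handles both uniformly by choosing a sequence $\eta_n\nearrow\sup_{A\ni x_0}\mu(A)$, and your endgame tracks two hull layers $[[G_n]]$ while the paper avoids this by directly wrapping $B$ in a single ball $C\supset A\cup B$ that contains $x_0$ and then applying the two-ball relation once.
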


\begin{proof}
	Fix an arbitrary $x_0\in X$ and let $\ZA\subset \ZB$ be the family of balls containing the point $x_0$. Choose a sequence $\eta_n\nearrow\eta=\sup_{A\in \ZA}\mu(A)$, where $\eta$ can also be infinity. Let us see by induction that there is an increasing  sequence of balls $G_n\in \ZA$ such that $\mu(G_n)> \eta_n$ and $G_n^*\subset G_{n+1}$. The base of induction is trivial. Suppose we have already chosen the first balls $G_k$, $k=1,2,\ldots, l$. There is a ball $A\in \ZA$ so that $\mu(A)>\eta_{l+1}$. Let $A_l$ be the biggest in measure among two balls $A$ and $G_l$. By property B4) we have $A\cup G_l\subset A_l^*$ and there exists a ball $G_{l+1}\supset A_l^{**}$. This implies $\mu(G_{l+1})\ge \mu(A)>\eta_{l+1}$ and $G_{l+1}\supset G_l^*$, completing the induction.
	Let us see that $G_n$ is our desired sequence of balls. Indeed, let $B$ be an arbitrary ball. By B2) property there is a ball $A$ containing $x_0$, such that $A\cap B\neq \varnothing$. Then by property B4) we may find a ball $C\supset A\cup B$. For some $n$ we will have $\mu(C)\le 2\mu(G_n)$ and so once again using B4), we get $B\subset  C\subset G_{n}^*\subset G_{n+1}$. 
\end{proof}

\begin{lemma}\label{L1-1}
	Let $(X,\mu)$ be a measure space with a ball bases $\ZB$. If $E\subset X$ is bounded and a family of balls $\ZG $ is a covering of $E$, i.e. $E\subset \bigcup_{G\in \ZG}G$, then there exists a finite or infinite sequence of pairwise disjoint balls $G_k\in \ZG$ such that $E \subset \bigcup_k G_k^{*}$.
\end{lemma}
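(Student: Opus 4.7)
The plan is a standard greedy (Vitali-type) selection, with a small preprocessing step to force all relevant balls into a set of finite measure. Fix a ball $B\in\ZB$ with $E\subset B$, and discard from $\ZG$ every ball that does not meet $E$ --- these play no role. The preprocessing splits into two cases. If some $G\in\ZG$ satisfies $\mu(G)\ge\mu(B)$, then $G\cap B\supset G\cap E\ne\varnothing$ and $\mu(B)\le 2\mu(G)$, so the two balls relation (consequence of B4)) yields $B\subset G^*$ and already $E\subset G^*$, giving the trivial one-term sequence. Otherwise every $G\in\ZG$ satisfies $\mu(G)<\mu(B)$, in which case the same two balls relation applied with the roles of $B$ and $G$ reversed (now $\mu(G)\le 2\mu(B)$ and $G\cap B\ne\varnothing$) gives $G\subset B^*$ for every $G\in\ZG$. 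Hence $\bigcup_{G\in\ZG}G\subset B^*$, and by \eqref{h13} this union has measure at most $\ZK\mu(B)<\infty$.

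Next, run the greedy procedure. Set $s_0=\sup_{G\in\ZG}\mu(G)$, which is finite, and choose $G_1\in\ZG$ with $\mu(G_1)>s_0/2$. Having selected pairwise disjoint $G_1,\dots,G_n\in\ZG$, put
\[
s_n=\sup\left\{\mu(G):\,G\in\ZG,\;G\cap\bigcup_{i\le n}G_i=\varnothing\right\};
\]
if the family on the right is empty, stop; otherwise pick $G_{n+1}$ in that family with $\mu(G_{n+1})>s_n/2$. By construction the $G_k$ are pairwise disjoint and contained in $B^*$, so $\sum_k\mu(G_k)\le\mu(B^*)<\infty$, whence $\mu(G_k)\to 0$ and therefore $s_k\to 0$.

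Finally, verify the covering $E\subset\bigcup_k G_k^*$. Take any $x\in E$ and pick $G\in\ZG$ with $x\in G$. If $G$ were disjoint from every $G_k$, then $\mu(G)\le s_k$ for all $k$, forcing $\mu(G)=0$ and contradicting B1). Hence there is a least index $k$ with $G\cap G_k\ne\varnothing$; for that $k$, $G$ is disjoint from $G_1,\dots,G_{k-1}$, so $\mu(G)\le s_{k-1}<2\mu(G_k)$, and the two balls relation once more gives $G\subset G_k^*$, in particular $x\in G_k^*$.

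\textbf{Main obstacle.} The only real subtlety is making sure the greedy selection cannot "escape to infinity": if one did not first confine the admissible family inside a set of finite measure, nothing would force $\mu(G_k)\to 0$, and the final contradiction step would collapse. This is precisely why the preprocessing that places every $G\in\ZG$ inside $B^*$ (via B4) and the two balls relation) is the crucial ingredient; everything afterwards is the textbook Vitali argument transcribed into the abstract ball-basis language.
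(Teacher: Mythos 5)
Your proof is correct and follows essentially the same approach as the paper: first trap every relevant ball inside $B^*$ via the two balls relation, then run the greedy Vitali-type selection and use the vanishing of $\mu(G_k)$ to show every $G\in\ZG$ lands inside some $G_k^*$. The only cosmetic difference is in the final covering step --- you pick the least $k$ with $G\cap G_k\neq\varnothing$ and bound $\mu(G)\le s_{k-1}<2\mu(G_k)$, while the paper picks the least $m$ with $\mu(G)\ge 2\mu(G_{m+1})$ and then locates an intersecting $G_j$ with $j\le m$; these are equivalent implementations of the same idea.
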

\begin{proof}
	The boundedness of $E$ implies $E\subset B$ for some $B\in \ZB$. If there is a ball $G\in\ZG$ so that $G\cap B\neq \varnothing$, $\mu(G)> \mu(B)$, then by two balls relation we will have $E\subset B\subset G^{*}$. Thus our desired sequence can be formed by a single element $G$. Hence we can suppose that every $G\in\ZG$ satisfies $G\cap B\neq \varnothing$, $\mu(G)\le \mu(B)$ and again by two balls relation $G\subset B^*$. Therefore, $\bigcup_{G\in\ZG}G\subset B^{*}$.
	Choose $G_1\in \ZG$, satisfying $\mu(G_1)> \frac{1}{2}\sup_{G\in\ZG}\mu(G)$. Then, suppose by induction we have already chosen elements $G_1,\ldots,G_k$ from $\ZG$. Choose $G_{k+1}\in \ZG$  disjoint with the balls $G_1,\ldots,G_k$ such that
	\begin{equation}\label{b1}
		\mu(G_{k+1})> \frac{1}{2}\sup_{G\in \ZG:\,G\cap G_j=\varnothing,\,j=1,\ldots,k}\mu(G).
	\end{equation}
	If for some $n$ we will not be able to determine $G_{n+1}$ the process will stop and we will get a finite sequence $G_1,G_2,\ldots, G_n$. Otherwise our sequence will be infinite. We shall consider the infinite case of the sequence (the finite case can be done similarly). Since the balls $G_n$ are pairwise disjoint and $G_n\subset B^{*}$, we have $\mu(G_n)\to 0$. Choose an arbitrary $G\in\ZG$ with $G\neq G_k$, $k=1,2,\ldots $ and let $m$ be the smallest integer satisfying
	$\mu(G)\ge 2\mu(G_{m+1})$.
	So we have $G \cap G_j\neq\varnothing$
	for some $1\le j\le m$, since otherwise by \e{b1}, $G$ had to be chosen instead of $G_{m+1}$. Besides, we have $\mu(G)< 2\mu(G_{j})$ because of the minimality property of $m$, and so by two balls relation $G\subset G_{j}^{*}$. Since $G\in \ZG$ was chosen arbitrarily, we get $E \subset\bigcup_{G\in \ZG} G\subset \bigcup_k G_k^{*}$.
\end{proof}

\begin{definition}\label{D1}
	For a measurable set $E\subset X$ a point $x\in E$ is said to be density point if for any $\varepsilon>0$ there exists a ball $B\ni x$ such that
	\begin{equation*}
		\mu(B\cap E)>(1-\varepsilon )\mu(B).
	\end{equation*} 
	We say a ball basis satisfies the density property if for any measurable set $E$ almost all points $x\in E$ are density points. 
\end{definition}
\begin{lemma}\label{L12}
	Every ball basis $\ZB$ satisfies the density condition.
\end{lemma}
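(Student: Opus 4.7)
The plan is to combine the set-approximation axiom B3) with the Vitali-type covering of \lem{L1-1}. First I reduce to the bounded case: by \lem{L5}, $X=\bigcup_k G_k$ for an increasing sequence of balls, and any density point of $E\cap G_k$ (viewed as a subset of itself) is automatically a density point of $E$, since $\mu(B\cap E)\ge\mu(B\cap E\cap G_k)$ for every ball $B$. Thus it suffices to treat $E\subset G$ for a fixed ball $G$. Fix $\varepsilon\in(0,1)$ and put
\begin{equation*}
F_\varepsilon=\{x\in E:\ \mu(B\cap E)\le(1-\varepsilon)\mu(B)\text{ for every ball } B\ni x\}.
\end{equation*}
The set of non-density points of $E$ equals $\bigcup_{n\ge1}F_{1/n}$, so it is enough to prove $\mu(F_\varepsilon)=0$. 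Measurability of $F_\varepsilon$ is a free consequence of the standing separability assumption on $\ZB$, since
\begin{equation*}
E\setminus F_\varepsilon=E\cap\bigcup\{B\in\ZB:\,\mu(B\cap E)>(1-\varepsilon)\mu(B)\},
\end{equation*}
and this arbitrary union of balls is measurable.

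The core is a one-shot Vitali argument. Given $\eta>0$, use B3) to choose balls $\{B_k\}$ with $U=\bigcup_k B_k$ satisfying $\mu(E\triangle U)<\eta$. Since $F_\varepsilon\subset E$,
\begin{equation*}
\mu(F_\varepsilon\cap U)\ge\mu(F_\varepsilon)-\mu(E\setminus U)>\mu(F_\varepsilon)-\eta.
\end{equation*}
Let $I=\{k:\,B_k\cap F_\varepsilon\ne\varnothing\}$. Then $\{B_k\}_{k\in I}$ covers $F_\varepsilon\cap U$, which is bounded (contained in $G$), so \lem{L1-1} produces a pairwise disjoint subfamily $\{B_{k_j}\}$ with $F_\varepsilon\cap U\subset\bigcup_j B_{k_j}^{*}$. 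Properties \e{h12}--\e{h13} give $\mu(B_{k_j}^{*})\le\ZK\mu(B_{k_j})$, while by the very definition of $F_\varepsilon$ each $B_{k_j}$ (meeting $F_\varepsilon$) satisfies $\mu(B_{k_j}\setminus E)\ge\varepsilon\mu(B_{k_j})$. Using disjointness and $B_{k_j}\subset U$,
\begin{equation*}
\sum_j\mu(B_{k_j})\le\varepsilon^{-1}\sum_j\mu(B_{k_j}\setminus E)\le\varepsilon^{-1}\mu(U\setminus E)\le\varepsilon^{-1}\eta.
\end{equation*}
Combining the two bounds,
\begin{equation*}
\mu(F_\varepsilon)-\eta<\mu(F_\varepsilon\cap U)\le\ZK\sum_j\mu(B_{k_j})\le\ZK\varepsilon^{-1}\eta,
\end{equation*}
so $\mu(F_\varepsilon)\le(1+\ZK\varepsilon^{-1})\eta$, and letting $\eta\downarrow0$ forces $\mu(F_\varepsilon)=0$.

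The only subtleties to watch are the measurability of $F_\varepsilon$ and the boundedness hypothesis of the Vitali lemma; the first is dispatched by the separability assumption already in force, the second by the preliminary reduction via \lem{L5}. Everything else is a routine balancing of the hull-ball constant $\ZK$ against the approximation parameter $\eta$, which is why no finer control than B3) is needed.
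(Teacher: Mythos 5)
Your proof is correct, and it follows the same high-level strategy as the paper: reduce to the bounded case via \lem{L5}, approximate by balls via B3), pass to a disjoint Vitali subfamily via \lem{L1-1}, and exploit the fact that every ball meeting the ``bad'' set leaves a definite $\varepsilon$-fraction of its mass outside $E$. The interesting departure is how you handle measurability. The paper's proof deliberately avoids invoking separability: it takes the bad set $F$ to be possibly non-measurable, works with its outer measure $\mu^*(F)$ and a measurable envelope $\bar F$, applies B3) to $\bar F$, and must then split the approximating balls into those meeting $F$ and those not, separately bounding the contribution of the second group by roughly $2\varepsilon$. You instead observe that under the paper's standing separability hypothesis $E\setminus F_\varepsilon$ is a measurable union of balls intersected with $E$, so $F_\varepsilon$ itself is measurable; you then apply B3) to $E$ rather than to an envelope, simply discard the balls disjoint from $F_\varepsilon$, and close the argument with the single disjoint subfamily. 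This eliminates the outer-measure bookkeeping and the two-family split, yielding a shorter argument. The trade-off is that your version genuinely uses separability (to get measurability of that union of balls), whereas the paper's longer route shows the density property holds even without it. Within the paper's framework — where separability is assumed throughout — both proofs are equally valid, and yours is arguably tidier.
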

\begin{proof}
Applying \lem{L5} one can check that it is enough to establish the density property for the bounded measurable sets. Suppose to the contrary there exist a bounded measurable set $E\subset X$ together its subset $F\subset E$ (maybe non-measurable) with an outer measure
\begin{equation}
	\mu^*(F)=\inf_{A\supset F,\,A\in \ZM}\mu(A)>0
\end{equation}
 such that
	\begin{equation}\label{z38}
		\mu(B\setminus E)>\alpha\mu(B) \text{ whenever }B\in\ZB,\, B\cap F\neq\varnothing,
	\end{equation}
where $0<\alpha<1$. According to the definition of the outer measure one can find a measurable set $\bar F$ such that 
	\begin{equation}\label{z2}
		F\subset \bar F\subset E, \quad \mu(\bar F)<\mu^*(F) +\varepsilon.
	\end{equation}
	By B3)-condition there is a sequence of balls $B_k$, $k=1,2,\ldots $, such that
	\begin{equation}\label{z42}	
		\mu\left(\bar F\bigtriangleup (\cup_k B_k)\right)<\varepsilon.
	\end{equation}
	We discuss two kind of balls $B_k$, satisfying either $B_k\cap F=\varnothing$ or $B_k\cap F\neq\varnothing$. For the first collection we have 
	\begin{equation*}
		\mu^*(F)\le \mu\left(\bar F\setminus \bigcup_{k:\,B_k\cap F=\varnothing} B_k\right)\le \mu(\bar F)<\mu^*(F)+\varepsilon,
	\end{equation*}
	which implies 
	\begin{equation*}
		\mu\left(\bar F\bigcap \left(\bigcup_{k:\,B_k\cap F=\varnothing} B_k\right)\right)=\mu(\bar F)- \mu\left(\bar F\setminus \bigcup_{k:\,B_k\cap F=\varnothing} B_k\right)<\varepsilon.
	\end{equation*}
	Thus, combining also \e{z42} we obtain
	\begin{equation}\label{z3}
		\mu\left(\bigcup_{k:\,B_k\cap F=\varnothing} B_k\right)<2\varepsilon.
	\end{equation}
For the balls $B_k\cap F\neq\varnothing$, using \e {z38} and \e{z2} we have
	\begin{equation}\label{z41}
		\mu(B_k\setminus \bar F)\ge \mu(B_k\setminus E)>\alpha \mu(B_k),\quad k=1,2,\ldots.
	\end{equation}
		Since $E$ and so $\bar F$ are bounded, applying \lem {L1-1} and \e{z42}, one can find a subsequence of pairwise disjoint balls $\tilde B_k$, $k=1,2,\ldots$, such that
	\begin{equation*}
		\mu\left(\bar F\setminus\cup_k\tilde B_k^{*}\right)<\varepsilon.
	\end{equation*} 
	Thus, from B4)-condition, \e{z42}, \e {z3} and \e {z41}, we obtain
	\begin{align*}
		\mu^*(F)< \mu(\bar F)&\le \mu\left(\cup_k\tilde B_k^{*}\right)+\varepsilon\le \ZK\sum_k\mu(\tilde B_k)+\varepsilon\\
		&= \ZK\mu\left(\bigcup_{k:\,\tilde B_k\cap F=\varnothing} \tilde B_k\right)+\ZK\sum_{k:\,\tilde B_k\cap F\neq\varnothing} \mu(\tilde B_k)+\varepsilon\\
		&<2\ZK\varepsilon+\frac{\ZK}{\alpha}\sum_k\mu(\tilde B_k\setminus \bar F)+\varepsilon\\
		&\le (2\ZK+1)\varepsilon+\frac{\ZK}{\alpha}\mu\left(\bar F\bigtriangleup( \cup_k B_k)\right)<\varepsilon\left(2\ZK+1+\frac{\ZK}{\alpha}\right).
	\end{align*}
	Since $\varepsilon $ can be arbitrarily small, we get $\mu(F)=0$ and so a contradiction.
\end{proof}
Denote by $\# E$ the number of elements of a finite set $E$.
\begin{lemma}\label{L1-2}  Let $A\in \ZB$ and $\ZG$ be a family of pairwise disjoint balls such that each $G\in \ZG$ satisfies the relations 
	\begin{align}
		&G^*\cap A \neq \varnothing,\label{y36}\\
		& 0<c_1\le \mu(G)\le c_2.\label{a41}
	\end{align}
	Then $\ZG$ is finite and for the number of its elements we have $\#\ZG\lesssim\max\{c_2, \mu(A)\}/c_1$.
\end{lemma}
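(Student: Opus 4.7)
The plan is to split $\ZG$ according to how $\mu([G])$ compares with $\mu(A)$ and to handle each piece via the two-balls relation. First observe that, by \e{h12}, $G^*\subset [G]$ for every $G\in\ZG$, so the hypothesis \e{y36} gives $[G]\cap A\neq\varnothing$ for each $G$. Accordingly, partition $\ZG=\ZG_1\sqcup\ZG_2$ with $\ZG_1=\{G:\mu([G])\le 2\mu(A)\}$ and $\ZG_2=\ZG\setminus\ZG_1$.

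For $G\in\ZG_1$ the two-balls relation applied to $[G]$ and $A$ yields $[G]\subset A^*$. Since the $G$'s are pairwise disjoint and $\mu(G)\ge c_1$, the estimate \e{h13} gives $\#\ZG_1\cdot c_1\le\sum_{G\in\ZG_1}\mu(G)\le\mu(A^*)\le\ZK\mu(A)$, so $\#\ZG_1\le\ZK\mu(A)/c_1$.

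For $\ZG_2$ the plan is to enclose every $G$ inside one bounded iterated hull anchored at a near-maximal element. Assuming $\ZG_2\neq\varnothing$, pick $G_*\in\ZG_2$ with $\mu([G_*])>\tfrac12\sup_{G\in\ZG_2}\mu([G])$, so that $\mu([G])<2\mu([G_*])$ for every $G\in\ZG_2$. For such $G$ the two-balls relation (now with $[G]$ the larger ball) gives $A\subset[G]^*$; in particular $A\subset[G_*]^*\subset[[G_*]]$. Choosing any $y_G\in G^*\cap A$, we obtain $y_G\in[G]\cap[[G_*]]$, and since $\mu([G])<2\mu([G_*])\le 2\mu([[G_*]])$, one more application of two-balls yields $[G]\subset[[G_*]]^*\subset[[[G_*]]]$. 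Hence $\bigcup_{G\in\ZG_2}G\subset[[[G_*]]]$, and combining pairwise disjointness with the iterated bound $\mu([[[G_*]]])\le\ZK^3\mu(G_*)\le\ZK^3 c_2$ gives $\#\ZG_2\cdot c_1\le\ZK^3 c_2$.

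Summing the two estimates produces $\#\ZG\le(\ZK+\ZK^3)\max(\mu(A),c_2)/c_1$, which is the desired admissible bound; finiteness is automatic since the right-hand side is finite. The main obstacle is that in a general (possibly non-doubling) ball-basis, iterated hulls of a single ball need not strictly enlarge its measure, so one cannot expect to bound every $[G]$ by iterated hulls of $A$ alone. The remedy is precisely to anchor the dominating hull at a near-maximal $G_*$ inside $\ZG_2$, which makes $[G]$ comparable to $[G_*]$ up to the factor $\ZK$ that gets absorbed by one further application of B4).
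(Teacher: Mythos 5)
Your proof is correct and follows essentially the same strategy as the paper's: both use the two-balls relation to enclose the pairwise disjoint family inside an iterated hull anchored either at $A$ or at a (near-)maximal-hull element of $\ZG$, and then exploit pairwise disjointness together with the lower bound $\mu(G)\ge c_1$. The only cosmetic difference is that you partition $\ZG$ explicitly according to whether $\mu([G])\le 2\mu(A)$, whereas the paper makes a single case distinction on whether $\mu(A)$ or $\mu([G_1])$ is larger; the resulting admissible constants differ slightly but the argument is the same.
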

\begin{proof} Suppose $G_1,G_2,\ldots, G_N$ are some elements of $\ZG$ with hull-balls satisfying $\mu([G_1])\ge \max_j\mu([G_i])$. Consider the ball
	\begin{align}\label {a42}
		B=\left\{
		\begin{array}{lc}
			A&\hbox{ if } \mu(A)>\mu([G_1]),\\
			\left[G_1\right]&\hbox { if } \mu(A)\le \mu([G_1]).\\
		\end{array}
		\right.
	\end{align}
	Using \e{y36}, one can check that for both cases in \e{a42} we have $G_k\subset [G_k]\subset B^{**}$, $k=1,\ldots, N$.	Thus, since $G_k$ are pairwise disjoint, from \e {a41} we obtain
	\begin{align}
		N\cdot c_1&\le \mu\left(\cup_{k} G_k\right)\le \mu(B^{**})\\
		&\le\max\{\mu(A^{**}),\mu([G_1]^{**})\}\lesssim \max\{c_2, \mu(A)\},
	\end{align}
	which completes the proof of lemma.
\end{proof}
\begin{remark}\label{R2}
	One can prove a similar lemma with $G^{**}\cap A \neq \varnothing$ instead of \e {y36}.
\end{remark}
\begin{lemma}\label{L2}
	If $\ZB$ is a doubling ball-basis, then for any ball $B\in\ZB$ with $B^*\neq X$ there is a ball $B'$, satisfying 
	\begin{equation}\label{x42}
		B\subsetneq B',\quad 2\mu(B)\le \mu( B')\le\eta\ZK \mu(B).	
	\end{equation}
\end{lemma}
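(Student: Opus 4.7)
The plan is to iterate the doubling axiom. Without loss of generality I take $\eta\ge 2$ (the doubling inequality for $\eta$ holds trivially for any larger constant). Setting $B_0:=B$, I inductively choose $B_{n+1}\supsetneq B_n$ with $\mu(B_{n+1})\le\eta\mu(B_n)$ via the doubling property, stopping at the first index $n$ where either $\mu(B_n)\ge 2\mu(B)$ or $B_n^*=X$.

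In the main case, stopping occurs at some $n\ge 1$ (since $\mu(B_0)=\mu(B)>0$ by B1) with $\mu(B_n)\ge 2\mu(B)$ and $\mu(B_{n-1})<2\mu(B)$. Since $B\subset B_{n-1}$ forces $B_{n-1}\cap B=B\neq\varnothing$, the two-balls relation (the immediate consequence of B4 noted at the beginning of \sect{S2}) gives $B_{n-1}\subset B^*\subset[B]$, whence $\mu(B_{n-1})\le\ZK\mu(B)$. Combining with the doubling estimate at step $n$ yields $\mu(B_n)\le\eta\mu(B_{n-1})\le\eta\ZK\mu(B)$, so $B':=B_n$ verifies \e{x42} through the strict chain $B\subsetneq B_1\subsetneq\cdots\subsetneq B_n$.

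The main obstacle is the edge case in which the iteration halts at some step $m$ because $B_m^*=X$ while $\mu(B_m)<2\mu(B)$. Then $X=B_m^*\subset[B_m]$ forces $[B_m]=X$, so $X\in\ZB$ by \lem{L0}, and the same two-balls argument gives $B_m\subset B^*$ and $\mu(X)\le\ZK\mu(B_m)<2\ZK\mu(B)\le\eta\ZK\mu(B)$ (using $\eta\ge 2$). Moreover, $B^*\neq X$ excludes $\mu(X)\le 2\mu(B)$, for otherwise the two-balls relation applied to $(X,B)$ would force $X\subset B^*$, a contradiction. Hence $B':=X$ satisfies the required strict inclusion and both measure bounds, completing the proof.
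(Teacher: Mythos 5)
Your argument is clean in both the ``main'' and ``edge'' branches, but there is a third possibility that you never rule out, and it is a genuine gap: \emph{the iteration might not terminate}. The doubling axiom only furnishes a ball $B_{n+1}$ with $B_n\subsetneq B_{n+1}$ and $\mu(B_{n+1})\le\eta\mu(B_n)$; it gives no lower bound $\mu(B_{n+1})>\mu(B_n)$, since the strict set inclusion $B_n\subsetneq B_{n+1}$ can be realized by adding a set of measure zero. Thus $\mu(B_n)$ is merely nondecreasing and may never reach $2\mu(B)$. At the same time, by the two-balls relation every $B_n$ with $\mu(B_n)<2\mu(B)$ lies inside $B^*$, hence $B_n^*\subset B^{**}$, and there is no reason $B_n^*$ should ever become $X$. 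So neither stopping criterion is guaranteed to fire, and your parenthetical ``stopping occurs at some $n\ge1$'' is an unproved assumption. The paper avoids this entirely by a non-iterative contradiction: assuming no admissible $B'$ exists, it shows the whole family $\ZA=\{A\in\ZB:\ B\subseteq A,\ \mu(A)\le\eta\ZK\mu(B)\}$ collapses to $[B]=B^*$, and then a \emph{single} application of the doubling axiom to $[B]$ produces a ball in $\ZA$ strictly larger than $[B]$ — contradiction. That scheme sidesteps the termination issue that your chain construction runs into. As a secondary (and minor) remark, the ``WLOG $\eta\ge2$'' step is not strictly harmless here, because $\eta$ appears in the \emph{conclusion}: if the genuine doubling constant is $\eta<2$ you only obtain $\mu(B')\le 2\ZK\mu(B)$ in the edge case, which is weaker than the stated bound $\eta\ZK\mu(B)$.
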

\begin{proof}
	Let $B\in \ZB$, $B^{*}\neq X$. Suppose to the contrary there is no ball $B'$ satisfying \e {x42} and denote
	\begin{equation*}
		\ZA=\{A\in\ZB:\,B\subseteq A,\, \mu(A)\le \eta\ZK\mu(B)\}.
	\end{equation*}
	According to the assumption every $A\in\ZA$ should satisfy the bound $\mu(A)<2\mu(B)$. This and B4) imply
$\bigcup_{A\in\ZA}A\subset B^*\subset [B]\in \ZB$, where $\mu([B])\le \ZK\mu(B)$. Thus, we have $[B]\in \ZA$ and so 
$\bigcup_{A\in\ZA}A= B^*=[B]$. On the other hand, according to the doubling condition, there is a ball 
$A'\supsetneq [B]$
	such that $\mu(A')\le\eta\cdot \mu([B])\le \eta\ZK\mu(B)$. The latter implies $A'\in\ZA$ and therefore $A'\subset [B]$ that gives a contradiction. Lemma is proved.
\end{proof}
The following lemma is in the same spirit as \lem{L5} above.
\begin{lemma}\label{L1}
	Let $\ZB$ be a doubling ball basis. Then for any balls $A\subset B$ there exists a sequence of balls $A=A_0\subset A_1\subset \ldots\subset A_n=[B]$ such that $\mu(A_{k+1})\lesssim \mu(A_k)$.
\end{lemma}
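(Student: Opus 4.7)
The plan is to build the chain iteratively, doubling the measure at each step via \lem{L2} and closing with a single ``jump'' to $[B]$. First, I would dispose of the trivial case $A=[B]$ by taking $n=0$, and assume hereafter that $A\subsetneq [B]$.

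I would construct $A_0=A\subsetneq A_1\subsetneq\cdots \subsetneq A_m$ by the following rule: given $A_k\subsetneq [B]$, if $A_k^*=X$, stop the doubling phase; otherwise \lem{L2} provides a ball $A'\supsetneq A_k$ with $2\mu(A_k)\le \mu(A')\le \eta\ZK\mu(A_k)$. If $\mu(A')\le 2\mu(B)$, then since $A'\cap B\supset A\neq\varnothing$, the two balls relation forces $A'\subset B^*\subset [B]$, and I set $A_{k+1}=A'$ (declaring the doubling phase terminated if in addition $A'=[B]$). If instead $\mu(A')>2\mu(B)$, I also stop the doubling phase. Since $\mu(A_k)\ge 2^k\mu(A)$, this procedure terminates after finitely many steps, at some terminal index $m$.

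Unless the chain already ended at $[B]$, I would append $A_{m+1}=[B]$ and check two points: $A_m\subsetneq [B]$ and that $\mu([B])/\mu(A_m)$ is an admissible constant. The containment $A_m\subset [B]$ follows inductively from the two balls argument applied at each accepted step, and strictness is enforced by the stopping rule. For the ratio there are two cases: if the loop stopped because $A_m^*=X$, then from $A_m^*\subset [A_m]$ I get $\mu(X)\le \ZK\mu(A_m)$, hence $\mu([B])\le \ZK\mu(B)\le \ZK\mu(X)\le \ZK^2\mu(A_m)$; if instead it stopped because the \lem{L2}-produced ball satisfied $\mu(A')>2\mu(B)$, then $\eta\ZK\mu(A_m)\ge \mu(A')>2\mu(B)$, so $\mu([B])/\mu(A_m)\le \ZK\mu(B)/\mu(A_m)\le \eta\ZK^2/2$. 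The main obstacle is the tension between two constraints: every $A_k$ must be inside $[B]$ (which forces the conditional $\mu(A')\le 2\mu(B)$), while the measures must strictly grow by a definite factor (handled by the lower bound $2\mu(A_k)$ in \lem{L2}); reconciling these via conditional acceptance of L2-balls followed by a single terminal jump to $[B]$ is the key device.
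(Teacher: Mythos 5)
Your proof is correct and follows the same underlying strategy as the paper's: iterate \lem{L2} to produce a chain of balls whose measures grow geometrically but stay comparable from one step to the next, and then close the chain by jumping to $[B]$. You are in fact somewhat more explicit than the paper's version, which iterates on hull-balls $[B_k]$ and stops once $\mu(B_{n-1})\ge\mu(B)$ without spelling out why the penultimate ball $[B_{n-1}]$ actually sits inside $[B]$, nor what happens if $[B_k]^*=X$ is hit first; your conditional acceptance rule $\mu(A')\le 2\mu(B)$ (forcing $A'\subset B^*\subset[B]$ via the two-balls relation) together with the separate $A_k^*=X$ stopping branch handle both points cleanly, and your two-case bound on $\mu([B])/\mu(A_m)$ is the right way to certify the final jump.
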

\begin{proof}
	The case of $\mu(B)\le 2\mu([A])$ is trivial, we may simply choose $A_0=A$ and $A_1=B$. So we can suppose $\mu(B)> 2\mu([A])$. Applying \lem{L2} we may find a sequence of balls $A=B_0, B_1,\ldots, B_{n-1}$ such that $[B_k]\subset B_{k+1}$ and $2\mu([B_k])\le \mu(B_{k+1})\le \eta \ZK \mu([B_k])$, $\mu(B)\le\mu(B_{n-1})\lesssim \mu(B)$. Obviously, we may define our desired sequence by $A_0=A$, $A_n=[B]$ and $A_k=[B_k]$, $k=1,2,\ldots,n-1$.
\end{proof}

\section{A set theoretic propositions}\label{S3}
For two measurable sets $E$ and $F$ the notation $E\subset_{a.e} F$ will stand for the relation $\mu(E\setminus F)=0$.
\begin{lemma}\label{L8}
	Let $F$ and $E\subset F$ be measurable sets, where $E$ is bounded as well. Then there exists a countable family of balls $\ZG=\ZG(E)$ such that
	\begin{align}
		&E\subset_{a.e.} \bigcup_{G\in \ZG}G, \label{a14}\\
		&\sum_{G\in \ZG}\mu(G)\le 2\ZK\mu(F).\label{a27}
	\end{align}
Besides, for any ball $G'\supsetneq G \in \ZG$ we have
	\begin{equation}\label{aa15}
		\mu(G'\cap F)<\mu(G')/2.
	\end{equation}
	If $G$ is doubling, then \e{aa15} (with the constant $2\eta\ZK$ on the right-hand side) holds also for $G'=G$.
\end{lemma}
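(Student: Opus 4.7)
The plan is to perform a Vitali-type stopping argument based on the density theorem (\lem{L12}), choosing for almost every $x\in E$ a nearly-maximal ball $B_x$ satisfying $\mu(B_x\cap F)\ge\mu(B_x)/2$, and then replacing it by its hull-ball $[B_x]$; the two-balls relation will then promote ``near-maximality'' to the strict maximality \e{aa15}.

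I would first reduce to the case $\mu(F)<\infty$ (when $\mu(F)=\infty$, \e{a27} is vacuous and one can replace $F$ by $F\cap B_0$ for a ball $B_0\supset E$ without affecting \e{aa15}). Any ball with $\mu(B\cap F)\ge\mu(B)/2$ then has $\mu(B)\le 2\mu(F)<\infty$. By \lem{L12}, almost every $x\in E$ is a density point, hence small enough $B\ni x$ satisfy $\mu(B\cap E)>(1-\varepsilon)\mu(B)$ and therefore (using $E\subset F$) satisfy $\mu(B\cap F)>\mu(B)/2$. Define
\begin{equation*}
R_x=\sup\bigl\{\mu(B):x\in B\in\ZB,\ \mu(B\cap F)\ge\mu(B)/2\bigr\}\in(0,2\mu(F)],
\end{equation*}
pick $B_x$ with $x\in B_x$, $\mu(B_x\cap F)\ge\mu(B_x)/2$ and $\mu(B_x)>R_x/2$, and set $G_x=[B_x]$ (the hull-ball from B4)).

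The critical step is to show that $G_x$ satisfies \e{aa15}. Suppose to the contrary that $G'\supsetneq G_x$ and $\mu(G'\cap F)\ge\mu(G')/2$. Since $x\in B_x\subset G_x\subsetneq G'$, one has $x\in G'$, whence $\mu(G')\le R_x<2\mu(B_x)$ by the choice of $B_x$. The two-balls relation applied to $G'$ and $B_x$ (they meet, as $B_x\subset G'$, and $\mu(G')\le 2\mu(B_x)$) gives $G'\subset B_x^{*}\subset[B_x]=G_x$, contradicting $G'\supsetneq G_x$. This hull-ball manoeuvre is the main conceptual obstacle in the proof: a genuine maximal element of the family $\{B:x\in B,\ \mu(B\cap F)\ge\mu(B)/2\}$ need not exist, and passing to the hull-ball is what converts the factor-$2$ slack in the choice of $B_x$ into an exact maximality statement.

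To finish, I apply \lem{L1-1} to the covering $\{B_x\}$ of the bounded set of density points of $E$, extracting a pairwise disjoint subfamily $\{B_{x_k}\}$ with $E\subset_{a.e.}\bigcup_k B_{x_k}^{*}\subset\bigcup_k[B_{x_k}]$; setting $\ZG=\{[B_{x_k}]\}$ yields \e{a14}. The estimate
\begin{equation*}
\sum_k\mu([B_{x_k}])\le\ZK\sum_k\mu(B_{x_k})\le 2\ZK\sum_k\mu(B_{x_k}\cap F)\le 2\ZK\mu(F)
\end{equation*}
follows from \e{h13}, from the defining inequality for each $B_{x_k}$, and from pairwise disjointness, giving \e{a27}; and \e{aa15} was verified above. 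For the doubling addendum, \lem{L2} provides, for each $G=[B_{x_k}]$ with $G^{*}\ne X$, a ball $G'\supsetneq G$ with $\mu(G')\le\eta\ZK\mu(G)$; combining with \e{aa15} one gets $\mu(G\cap F)\le\mu(G'\cap F)<\mu(G')/2\le\eta\ZK\mu(G)/2$, which is the asserted inequality with an admissible constant.
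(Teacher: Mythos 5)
Your non-doubling argument is essentially the paper's: choose a near-maximal $B_x$ with $\mu(B_x\cap F)\ge\mu(B_x)/2$, pass to the hull-ball $[B_x]$ so the two-ball relation promotes factor-$2$ near-maximality to the strict statement \e{aa15}, then extract a Vitali subfamily via \lem{L1-1}. (The paper runs the same idea by choosing $\mu(B_x)>\tfrac12\sup_{C\in\ZC_x}\mu(C)$ and noting $\bigcup_{C\in\ZC_x}C\subset B_x^*\subset[B_x]$; your $R_x$-formulation is the same manoeuvre, and taking density points of $E$ rather than of $F$ is cosmetic.) That part is fine.

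The genuine gap is the doubling addendum. You keep the balls $G=[B_{x_k}]$ fixed and only enlarge inside the estimate, obtaining $\mu(G\cap F)<\eta\ZK\,\mu(G)/2$. Since $\eta>1$ and $\ZK\ge1$ (so in general $\eta\ZK\ge 2$), this bound is no better than the trivial $\mu(G\cap F)\le\mu(G)$ and is vacuous; moreover the downstream applications of the lemma (in \pro{P11} and, via \e{y46}, in \pro{T3}) require the genuine constant $1/2$ so that $\mu(G\cap E_A)>\mu(G)/2$ can be deduced. The paper's argument is different: it \emph{replaces} each $G$ in the collection by a ball $\tilde G\supsetneq G$ with $\mu(\tilde G)\le\eta\mu(G)$ (using the doubling property \e{h73} directly, not \lem{L2}); then \e{aa15} for the \emph{old} $G$ applied with $G'=\tilde G\supsetneq G$ gives $\mu(\tilde G\cap F)<\mu(\tilde G)/2$ with the true $1/2$, at the price of degrading \e{a27} to $\sum\mu(\tilde G)\le 2\eta\ZK\mu(F)$ --- which is what the parenthetical ``constant $2\eta\ZK$'' in the lemma refers to. A smaller side issue: your reduction to $\mu(F)<\infty$ asserts that replacing $F$ by $F\cap B_0$ ``does not affect \e{aa15}''; it does, since $\mu(G'\cap F\cap B_0)<\mu(G')/2$ does not imply $\mu(G'\cap F)<\mu(G')/2$. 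In practice the lemma is only invoked with $\mu(F)<\infty$ (indeed for $F=X$, $\mu(X)=\infty$ the statement is false), so the proof should simply carry that standing assumption rather than attempt a reduction.
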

\begin{proof}
	First consider a general ball-basis. Let $D$ be the density points set of $F$ (see \df{D1}). Thus for any $x\in D$ the family of balls
	\begin{equation*}
		\ZC_x=\{C\in\ZB:\, x\in C,\,\mu(C\cap F)\ge\mu(C)/2\}
	\end{equation*}
	is nonempty and therefore there is  a ball $B_x\in \ZC_x$ such that  $\mu(B_x)>\frac{1}{2}\sup_{C\in \ZC_x}\mu(C)$.
	From the two ball relation it follows that $\cup_{C\in \ZC_x}C\subset B_x^*$.
	Thus, if a ball $G'$ satisfies $G'\supsetneq B_x^*$, then $x\in G'\notin \ZC$ and so
	\begin{equation}\label{y8}
		\mu(G'\cap F)<\mu(G')/2.
	\end{equation}
	Since $\{B_x:\,x\in E\cap D\}$ is a ball-covering for $E\cap D$, applying \lem{L1-1}, we find sequence of pairwise disjoint balls $\{B_{x_k}\}$ such that 
	\begin{equation}\label{y5}
		E\cap D\subset \cup_kB_{x_k}^*.
	\end{equation}
	Choose a hull-ball $G_k\supset B_{x_k}^*$ such that $\mu(G_k)\le \ZK\mu(B_{x_k})$. We claim that the family $\ZG(E)=\{G_k\}$ satisfies the conditions of lemma. Indeed, \e{aa15} follows from the observation in \e{y8}.  By density \lem{L12} we have $\mu(F\setminus D)=0$ and combining also \e{y5}, we obtain \e{a14}. Then, since 
	$B_x\in \ZC_x$, it follows that
	\begin{align}
		\sum_k\mu(G_k)\le \ZK \sum_k\mu(B_{x_k})< 2\ZK \sum_k\mu(B_{x_k}\cap F)\le 2\ZK \mu(F)
	\end{align}
	that gives \e{a27}. This completes the proof in the case of non-doubling ball-basis. Now let $\ZB$ is doubling. Applying the above procedure first we get $\ZG(E)$, then replace each $G\in \ZG(E)$ by a ball $\tilde G\supsetneq G$ satisfying $\mu(\tilde G)\le \eta\mu(G)$. Clearly, all the conditions of \lem{L8} will be satisfied for the new collection as well, besides we will have \e{aa15} for $G'=G$.
\end{proof}
\begin{remark}\label{R3}
	Obviously we can also claim in the statement of \lem{L8} an extra condition 
	\begin{equation}\label{y9}
		E\cap G\neq\varnothing,\quad G\in \ZG(E).
	\end{equation}
	One just need to remove those balls from $\ZG(E)$, which do not satisfy \e{y9}.
\end{remark}

\begin{definition}
	A family of balls $\ZA$ is said to be a tree-collection with a root-ball $A_0\in \ZA$ if for each $A\in \ZA$ there is an attached children-collection $\ch(A)\subset \ZA$ (it can also be empty), $A\notin \ch(A)$, such that for every element $A\in \ZS$ there is a sequence of balls $\{A_0, A_1, \ldots, A_n=A\}\subset \ZA$ with $A_{j+1}\in \ch(A_j)$, $j=0,1,\ldots,n-1$. The relation $G\in \ch(A)$ otherwise will be denoted by $A=\pr(G)$, meaning that $A$ is the parent of $G$.  We say $G$ is in the generation of a ball $A\in \ZS$ if $G\in \ch^n(A)$ for some $n\ge 1$. The generation of a ball $A\in \ZA$ will be denoted by $\Gen(A)=\cup_{n\ge 1}\ch^n(A)$. By the definition for the root-ball we have $\Gen(A_0)=\ZA\setminus \{A_0\}$.
\end{definition}
The notation $n\ll m$ ($n\gg m$) for two integers $n,m$ denotes $n<m-1$ ($n> m+1$) and $n\asymp m$ will stand for the condition $|m-n|\le 1$.
\begin{proposition}\label{P11}
	Let $\{F_B:\, B\in \ZB\}$ be a family of measurable sets such that $\mu(F_B)<\alpha \mu(B)$, where $0<\alpha<1/10\ZK^7$. Then for any ball $A_0$ one can find a countable sparse-tree-collection of balls $\ZS$ containing a double-hull ball $[[A_0]]$ as a root-ball, such that 
	\begin{enumerate}
		\item [a1)] $G\in \ch(A)\,\Rightarrow \, G\subset  A$,
		\item [a2)] $A_0\subset_{a.e.} \cup_{A\in \ZS}A\setminus F_{A}$
		\item [a3)] for any $A\in \ZS$ it holds the bound
		\begin{equation}\label{s27}
			\sum_{G\in \ch(A)}\mu(G)\lesssim\alpha  \mu(A),
		\end{equation}
		\item [a4)] if $G\in \ch(A)$, then for any ball and $G'\supsetneq G$
		\begin{equation}\label{y27}
			\mu(G'\cap F_A)< \mu(G')/2\text { if }G\in \ch(A),
		\end{equation}
	\end{enumerate}
\end{proposition}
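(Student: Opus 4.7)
The plan is to build $\ZS$ level by level as a tree of balls, applying \lem{L8} at each node to produce the next generation. Let the root layer be $\ZS_0=\{[[A_0]]\}$. For each already-placed $A\in\ZS_k$, apply \lem{L8} with $E:=A\cap F_A$ and $F:=F_A$ to obtain a countable family of balls $\ZG(A)$ covering $A\cap F_A$ up to a null set, whose total measure is at most $2\ZK\mu(F_A)\le 2\ZK\alpha\mu(A)$, and each of which satisfies $\mu(G'\cap F_A)<\mu(G')/2$ for every $G'\supsetneq G\in\ZG(A)$. Declare $\ch(A):=\ZG(A)$ (after the refinement sketched below) and take $\ZS_{k+1}:=\bigcup_{A\in\ZS_k}\ch(A)$, $\ZS:=\bigcup_{k\ge 0}\ZS_k$.

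With this construction, properties a3) and a4) are the conclusions of \lem{L8} verbatim. For a1), every $G\in\ZG(A)$ arises as the hull of a Vitali ball $B_x$ with $x\in A\cap F_A$ and $\mu(B_x)/2\le\mu(B_x\cap F_A)\le\alpha\mu(A)$, so $\mu(G)\le\ZK\mu(B_x)\le 2\ZK\alpha\mu(A)$, which is much smaller than $\mu(A)$ by the hypothesis $\alpha<1/(10\ZK^7)$; iterating the two balls relation (together with the fact that the root is the double-hull $[[A_0]]$, which supplies the hull layers needed to absorb the $\ZK$-factors introduced at each step) gives $G\subset A$ throughout the tree. Property a2) then follows by induction on $k$: the set of points in $A_0$ not yet placed in any $A\setminus F_A$ after $k$ generations is contained in $U_k:=\bigcup_{A\in\ZS_k}(A\cap F_A)$, and a3) forces $\mu(U_k)\le(2\ZK\alpha)^k\mu([[A_0]])\to 0$.

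The delicate step is verifying sparsity of $\ZS$. The natural candidate is $E_A:=A\setminus\bigcup_{G\in\ch(A)}G$: property a3) gives $\mu(E_A)\ge(1-2\ZK\alpha)\mu(A)\ge\gamma\mu(A)$ for an admissible $\gamma>0$, and $\{E_A\}$ is pairwise disjoint for tree-comparable pairs because any proper descendant of $A$ lies in some $G\in\ch(A)$ by iterated a1) and hence outside $E_A$. The only remaining case is tree-incomparable $A,A'\in\ZS$ that overlap in $X$, which a priori could arise when sibling covering-balls produced by \lem{L8} intersect. I would eliminate this by a laminarization step before fixing $\ch(A)$: run a greedy Vitali selection on $\ZG(A)$ ordered by decreasing measure to extract a pairwise-disjoint subfamily $\{\tilde G_j\}$, and absorb each discarded $G$ into the hull of the dominating $\tilde G_j$ via the two balls relation (since such $G$ satisfies $G\subset\tilde G_j^*\subset[\tilde G_j]$). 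The strong hypothesis $\alpha<1/(10\ZK^7)$ provides precisely the slack needed to perform this refinement while preserving a1) and a3) with admissible constants.

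The main obstacle is therefore this laminarization: arranging siblings to be pairwise disjoint without sacrificing either the covering property that drives a2) or the measure bound in a3). Once a laminar tree is in hand, the $E_A$ are automatically pairwise disjoint and sparsity is immediate; everything else is a routine iteration of \lem{L8} combined with the geometric decay of $\mu(U_k)$.
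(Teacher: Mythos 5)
Your overall skeleton — iterating Lemma~\ref{L8} to build a tree, reading off a3) and a4) directly from its conclusions, and deducing a2) from the geometric decay $\mu(U_k)\lesssim(2\ZK\alpha)^k\mu(A_0)$ — does mirror the first half of the paper's construction. But the step you flag as ``delicate'' (the laminarization) is in fact the heart of the proposition, and the fix you sketch does not close the gap.

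The trouble is that Lemma~\ref{L8} (really, Lemma~\ref{L1-1} inside it) gives you a Vitali-type subfamily $\{\tilde G_j\}$ that is \emph{pairwise disjoint} but only covers $A\cap F_A$ after passing to the enlargements $\tilde G_j^*$. If you keep $\{\tilde G_j\}$ as the children, sibling disjointness holds but the covering property that drives a2) is lost: points of $A\cap F_A$ lying in some $\tilde G_j^*\setminus\tilde G_j$ are never absorbed by any child and the descent argument strands them. If instead you declare the children to be the hull-balls $[\tilde G_j]$ to restore coverage — as your proposal does — then those hulls can and do overlap, and you are back where you started: nothing in the two-balls relation or the smallness of $\alpha$ makes $\{[\tilde G_j]\}$ pairwise disjoint. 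So with either choice your $E_A = A\setminus\bigcup_{G\in\ch(A)}G$ fails to be pairwise disjoint across siblings, and the claimed sparsity does not follow. Note also that ``sparse'' does not require a laminar tree of balls — only disjoint witness sets $E_A$ — so trying to force sibling disjointness is both harder than necessary and, as above, actually impossible here.

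The paper takes a genuinely different route around this obstruction. It never makes siblings disjoint. Instead it introduces the scale index $\r(B)=[\log_{\ZK^2}\mu(B)]$, runs two removal procedures on the raw tree (Procedure~1 kills balls $G$ that meet a ball $B$ at an intermediate $\r$-scale between $G$ and $\pr(G)$; Procedure~2 applies Lemma~\ref{L1-1} \emph{within each} $\r$-level to make same-level balls disjoint), and then defines $E(A)=A\setminus\bigcup_{\r(G)\ll\r(A)}G$ — subtracting \emph{all} surviving balls at much smaller $\r$-levels, not just the children of $A$. The resulting disjointness of $\{E(A)\}$ only holds when $\r(A)$ and $\r(A')$ are either equal or differ by at least $2$, so the family naturally splits into the odd- and even-$\r$ subfamilies, each $1/2$-sparse; the measure bound $\mu(E(A))\ge\mu(A)/2$ then needs Lemma~\ref{L1-2} to count the finitely many ``interfering'' balls at comparable $\r$-levels. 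Finally, showing a2) still holds after all these removals is itself nontrivial and requires the careful analysis of which balls were deleted and why (the sets $Q_A$ and the argument around \e{d31}--\e{y74}). None of this machinery appears in your proposal, and I do not see how to avoid it: the $\r$-level bookkeeping and Procedure~1 are precisely what resolves the sibling-overlap problem that your laminarization cannot.

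A smaller point: your a1) argument (``iterating the two balls relation ... supplies the hull layers needed'') is only a heuristic. The paper's children are hull-balls $[B_{x_k}]$ of Vitali balls, and containment $G\subset A$ at the level of the final tree $\ZS=\{[[A]]:A\in\bar\ZA\}$ is extracted by passing to double hulls and using \e{y72} together with \e{s9}; your version needs an analogous bookkeeping of the $\ZK$-factors picked up at each layer, which you assert but do not verify.
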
	
If the ball basis $\ZB$ is doubling, then we can claim \e{y27} for $G'=G$ as well.
\begin{proof}
	First we construct a tree-collection of balls $\ZA$ with the root-ball $A_0$, then the desired sparse family $\ZS$ will be obtained applying certain removal procedures on $\ZA$. The elements of $\ZA$ will be determined inductively by an increasing order of generations levels. 
	First, we apply \lem{L8} for $F=F_{[[A_0]]}$ and $E=[A_0]\cap F_{[[A_0]]}$. We get a child-ball collection $\ch(A_0)=\ZG(E)$ satisfying conditions \e{a14}-\e{aa15}. Then we do the same with every $A\in \ch(A_0)$, again applying \lem{L8} for $F=F_{[[A]]}$ and $E=[A]\cap F_{[[A]]}$. With this we obtain the second generation $\ch^2(A_0)$. Continuing this procedure to infinity we will get generation families  $\ch^n(A_0)$, $n=0,1,2,\ldots$, which union we denote by $\ZA$. By the claims of \lem{L8} (see also \rem{R3}) for every $A\in \ZA$ we have
	\begin{align}
		&[A]\cap F_{[[A]]}\cap G\neq\varnothing,\quad G\in \ch(A), \label{s9}\\
		&[A]\cap F_{[[A]]}\subset_{a.e.} \bigcup_{G\in \ch(A)}G, \quad \label{s14}\\
		&\sum_{G\in \ch(A)}\mu(G)\le 2\ZK\mu(F_{[[A]]})\\
		&\qquad\qquad\quad\le 2\alpha\ZK  \mu([[A]])\le 2 \alpha\ZK^3\mu(A)< \mu(A)/5\ZK^4,\label{y72}\\
		&\mu(G'\cap F_{[[A]]})<\mu(G')/2,\label{s15}
	\end{align}
	where \e{s15} holds for any ball $G'\supset [G],\, G\in \ch(A)$. Besides, from \e{y72} we get $\mu(G^{**})\le \ZK^2\mu(G)<\mu(A)$ and so $G^{**}\subset A^{**}$, provided $G\in \ch(A)$. The latter implies 
	\begin{equation}\label{y11}
		G^{**}\subset A^{**}\text{ if }G\in \Gen (A).
	\end{equation}
	
	Now we apply removal procedures on $\ZA$, removing some elements of $\ZA$. As we will see below, 
	removing an element $A\in \ZA$, we also remove all the elements of its generation $\Gen (A)$. Thus relations \e{s9}, \e{y72} and \e{s15} keep holding during the entire process of reduction. 
	
	To start the description of the process, we let $R=\ZK^2$ and for $B\in \ZB$ denote $\r(B)=[\log_R \mu(B)]$. Since $G\in \ch(A)$ implies $\mu(G)< \mu(A)/R$ (see \e{y72} ), we can say
	\begin{equation}\label{y73}
		G\in \ch(A)\Rightarrow \r(G)<\r(A).
	\end{equation}
	Thus the collections of balls
	\begin{align*}
		\ZA_k&=\{B\in \ZA:\, \r(B)=k\}\\
		&=\left\{B\in \ZA:\, R^{k}\le \mu(B)<R^{k+1}\right\},\quad k\le k_0=\r(A_0),
	\end{align*}
	give a partition of $\ZA$, i.e. we have $\ZA=\cup_{k\le k_0} \ZA_k$, where $\ZA_{k_0}=\{A_0\}$. 
	The removal of the elements of $\ZA$ will be realized in different stages. The content of $\ZA_{k_0}$ will not be changed. In the $n$-th stage only the contents of the families $\ZA_k$ with $k\le k_0-n$ can be changed. Besides, at the end of the $n$-th stage $\ZA_{k_0-n}$ will be fixed and remain the same till the end of the process. Suppose by induction the $l$-th stage of the process has been already finished and so the families $\ZA_k$, $k=k_0,k_0-1,k_0-2,\ldots, k_0-l$ have already fixed. In the next $(l+1)$-th stage we will apply the following two procedures consecutively:
	\begin{procedure} Remove any element $G\in \ZA_{k_0-l-1}$ together with all the elements of his generation $\Gen(G)$ if there exists a $B\in \ZA$ satisfying the conditions  
		\begin{align}	
			&G^{**}\cap B\neq \varnothing ,\label{d29}\\
			&\r(\pr^k(G))\ll\r(B)\ll \r(\pr^{k+1}(G)),\label{d32}
		\end{align}
		for some integer $k\ge 0$ (for $\ll$ see the notation before \pro{P11}).
	\end{procedure}
	\begin{remark}
		Observe that if an element $G$ is removed because of a ball $B$ satisfying conditions \e {d29} and \e {d32} of Procedure 1, then we should have 
		\begin{equation}\label{y10}
			\r(G)\ll\r(B)\ll\r(\pr(G))
		\end{equation}
		that means at any time of the process \e {d32} can hold only with $k=0$. Indeed, the left hand side inequality in \e{y10} immediately follows from \e {d32}. To prove the right one, suppose to the contrary \e {d32} holds with $k\ge 1$. Thus, according to \e{y73}, we can write
		\begin{equation*}
			G'=\pr^k(G)\in \bigcup_{j=0}^{l}\ZA_{k_0-j}. 
		\end{equation*}
		Since $G'^{**}\supset G^{**}$ (see \e {y11}), we have $G'^{**}\cap B\neq \varnothing $. On the other hand  \e {d32} can be written by $\r(G')\ll\r(B)\ll\r(\pr(G'))$. We thus conclude that $G'$ satisfies the conditions of the Procedure 1, so $G'$  together with his generation $\Gen(G')$ (include $G$) had to be removed in one of the previous stages of the process. This is a contradiction and so $k=0$. 
		\begin{remark}
			We also observe that if some $G$ is removed because of \e{d29} and \e{d32}, then the ball $B$ in \e{d32} will never be removed from $\ZA$ during the entire process of induction.
		\end{remark}
	\end{remark}
	\begin{procedure}
		Apply \lem {L1-1} to the rest of the elements $\ZA_{k_0-l-1}$ having after Procedure 1. The application of \lem {L1-1} removes some more elements of $\ZA_{k_0-l-1}$. If an element $A$ is removed, then the generation $\Gen(A)$ will also be removed. 
	\end{procedure}
	\begin{remark}
		After the Procedure 2 the elements of $\ZA_{k_0-l-1}$ become pairwise disjoint. Besides, we will have   
		\begin{equation}\label{a70}
			\bigcup_{G\in \ZA_{k_0-l-1}(\text{\rm before Procedure 2})} G\subset \bigcup_{G\in \ZA_{k_0-l-1}(\text{\rm after Procedure  2})}G^{*}.
		\end{equation}
	\end{remark}
	After these two procedures the family $\ZA_{k_0-l-1}$ will be fixed. Hence, finishing the induction process, we get the final state of $\ZA$ which will be denoted by $\bar\ZA$. Since after Procedure 2 in the $n$-th stage $\ZA_{k_0-n}$ gets countable number of balls so the family $\bar\ZA$ will also be countable at the end of whole process. 
	
	Now we shall prove that for an admissible constant $\alpha>0$ the family $\bar\ZA$ is a union of two $1/2$-sparse collections of balls. For $A\in \bar\ZA$ define
	\begin{equation}\label{d25}
		E(A)=A\setminus \bigcup_{G\in \bar\ZA:\,\r(G)\ll\r(A)} G
		=A\setminus \bigcup_{G\in \bar\ZA:\,G\cap A\neq\varnothing,\,\r(G)\ll\r(A)}G.
	\end{equation}
	Observe that 
	\begin{equation}\label{y13}
		E(A)\cap E(B)=\varnothing,\text { if } \r(A)\not \asymp \r(B) \text { or } \r(A)= \r(B).
	\end{equation}
	Indeed, take arbitrary $A,B\in \bar\ZA$. If $\r(A)=\r(B)$, then the balls $A,B$ as a result of the application of Procedure 2 (\lem {L1-1}) are pairwise disjoint. Therefore from \e {d25} it follows that $E(A)\cap E(B)=\varnothing$. If $\r(A)\gg\r(B)$, then $E(A)\cap B=\varnothing$
	immediately follows from definition \e {d25} and so we will again have $E(A)\cap E(B)=\varnothing$. 
	To prove 
	\begin{equation}\label{aa28}
		\mu(E(A))\ge \mu(A)/2
	\end{equation}
	take an arbitrary $A\in \bar\ZA$ and denote
	\begin{equation*}
		\ZP=\ZP_A=\{P\in \bar\ZA:\, \r(P)\asymp\r(A),\,P^{**}\cap A\neq\varnothing\}.
	\end{equation*} 
	We have 
	\begin{equation}\label{h55}
		R^{-2}\cdot \mu(A)\le \mu(P)\le R^2\cdot \mu(A),\quad P\in \ZP,
	\end{equation}
as well as
	\begin{equation*}
		\ZP\subset \ZA_{l-1}\cup\ZA_l\cup\ZA_{l+1},
	\end{equation*}
	where $l=\r(A)$. Hence $\ZP$ consists of three families of pairwise disjoint balls (see Procedure 2). Thus, applying  \lem {L1-2} (see also \rem{R2}), we get 
	\begin{equation}\label{a84}
		\#\ZP\le\ZK^4.
	\end{equation}
	Suppose that $G\in \bar\ZA$ satisfies
	\begin{equation*}
		\r(G)\ll\r(A),\quad G\cap A\neq\varnothing,  
	\end{equation*}
	and so $G^{**}\cap A\neq\varnothing$. Since $G$ has not been removed via Procedure 1, we have $\r(\pr^k(G))\asymp\r(A)$ for some integer $k\ge 1$. Denote $P=\pr^k(G)\in \bar \ZA$. We have $\r(P)\asymp\r(A)$ as well as by \e {y11} $P^{**}\supset G^{**}$ and so $P^{**}\cap A\neq\varnothing$. This implies that $P\in\ZP$ and $G\in \Gen(P)$.  Hence, from \e{y72}, \e{d25}, \e {h55} and  \e {a84} it follows that 
	\begin{align*}
		\mu(A\setminus E(A))&\le \mu\left(\bigcup_{G\in \bar\ZA:\,G\cap A\neq\varnothing,\,\r(G)\ll\r(A)}G\right)\\
		&\le \mu\left(\bigcup_{P\in \ZP}\bigcup_{G\in \Gen(P)}G\right)\le \sum_{P\in \ZP}\sum_{k=1}^\infty \mu\left(\bigcup_{G:\,\pr^k(G)=P}G\right)\\
		&\le \sum_{P\in \ZP}\sum_{k=1}^\infty (5\ZK^4)^{-k}\mu(P)< \mu(A)/2.  
	\end{align*}
	Thus we get \e {aa28}. From \e {y13} and \e {aa28} one can easily conclude that two families  
	\begin{align}
	&\bar\ZA_1=\{A\in\bar\ZA:\, \r(A)\text { is odd}\},\\
		& \bar\ZA_2=\bar\ZA\setminus \bar\ZA_1=	\bar\ZA_1=\{A\in\bar\ZA:\, \r(A)\text { is even}\},
	\end{align}
	are $1/2$-sparse.  Let us see that for any $A\in \bar\ZA$ the set
	\begin{equation}\label{d31}
		Q_A=\left([A]\cap F_{[[A]]}\right)\setminus\bigcup_{G\in \bar\ZA:\, \r(G)<\r(A)}G^{*}
	\end{equation}
is a null set. According to \e {s14} it  is enough to prove that 
	\begin{equation}\label{d28}
		D=\bigcup_{G\in \bar\ZA:\, \r(G)<\r(A)}G^{*}
		\supset \bigcup_{G\in\ZA:\, G\in\Ch(A)}G.
	\end{equation}
	Take $A\in\ZA$ and arbitrary $G\in\Ch(A)$. We have $\r(G)<\r(A)$. In the case $G\in \bar\ZA$, that is $G$ has not  been removed from $\ZA$ during the Procedures 1 and 2, $G$ is an element of the left union of \e {d28} and so $G\subset D$. If $G\not\in \bar\ZA$, then $G$ has been removed during the removal process. If $G$ was removed by an application of Procedure 1, then there exists a ball $B\in \bar\ZA$ such that $G^{**}\cap B\neq \varnothing$ and $\r(G)\ll\r(B)\ll\r(\pr(G))=\r(A)$ (see two remarks after Procedure 1). On the other hand for a double hull-ball of $[[G]]$ we have
	\begin{equation*}
		\mu([[G]])\le \ZK^2\mu(G)\le \ZK^2\cdot \frac{\mu(B)}{R}= \mu(B).
	\end{equation*}
	Thus we get $G\subset G^{**}\subset [[G]]\subset B^{*}$, which means $G\subset D$. If $G$ was removed by an application of Procedure 2, then according to \e {a70} we have $G\subset \cup_kG_k^{*}$ for a family of balls $G_k$ satisfying $\r(G_k)=\r(G)<\r(A)$ and so $G_k^{*}\subset D$. This again implies $G\subset D$ and so we get \e {d31}. Now observe that 
	\begin{equation*}
		E=\bigcap_{k\le k_0} \bigcup_{G\in\bar\ZA:\, \r(G)\le k}G^{*}
	\end{equation*}
	is a null-set, since $\bar\ZA$ consists of countable number of balls with bounded sum of their measures (see \e{y72}). Since the sets \e{d31} are null sets, so is $F=\cup_{A\in \bar\ZA}Q_A$. Choose an arbitrary $x\in [A_0]\setminus (E\cup F)$. From $x\notin E$ it follows that
	\begin{equation}
		x\in \bigcup_{G\in\bar\ZA:\, \r(G)\le k+1}G^{*}\setminus \bigcup_{G\in\bar\ZA:\, \r(G)\le k}G^{*}
	\end{equation}
for an integer $k$. Thus there exists a ball $A\in \bar\ZA$ (with $\r(A)=k+1$) such that
	\begin{equation}\label{y79}
		x\in A^{*}\setminus \bigcup_{G\in \bar\ZA:\,\r(G)<\r(A)}G^{*}\subset  [A]\setminus \bigcup_{G\in \bar\ZA:\,\r(G)<\r(A)}G^{*}.
	\end{equation}  
Since $x\notin F$, we can write
\begin{equation}\label{y80}
	x\notin Q_A=\left([A]\cap F_{[[A]]}\right)\setminus\bigcup_{G\in \bar\ZA:\, \r(G)<\r(A)}G^{*}.
\end{equation}
From \e{y79} and \e{y80} we conclude $x\in [A]\setminus F_{[[A]]}$. Hence we obtain
\begin{equation}\label{y74}
	A_0\subset [A_0]\subset_{a.e.}\bigcup_{A\in \bar\ZA}[A]\setminus F_{[[A]]}\subset \bigcup_{A\in \bar\ZA}[[A]]\setminus F_{[[A]]}.
\end{equation}
One can now check that $\ZS=\{[[A]]:\, A\in \bar\ZA\}$ is the desired sparse tree-collection, satisfying conditions a1)-a4), where the children relationship is the same as we had in $\bar\ZA$. Indeed, \e{y74} $\Rightarrow$ a2), \e{s15} $\Rightarrow$ a4). An intermediate inequality in \e{y72} $\Rightarrow$ a3).
From  \e{y72} and T2) condition it easily follows that $\mu([[G]])\le \mu([A])$ whenever $G\in \ch(A)$. Thus, using also \e{s9} we conclude $[[G]]\subset [[A]]$, that implies a1). Hence the proof is complete.

.

\end{proof}

\begin{proposition}\label{L13}
	Let $\ZA$ be a countable tree-collection of measurable sets in $X$ such that 
	\begin{equation}\label{y44}
		G\in \ch(A)\Rightarrow G\subset A,
	\end{equation} 
	and let $\{E_A\subset A:\, A\in \ZA\}$ be another family of measurable sets. Then there exists a martingale family of sets $\bar \ZA=\{\bar A\subset A:\, A\in \ZA\}\Subset\ZA$ such that the sets $\bar A\cap E_A$, $A\in \ZA$, are pairwise disjoint and 
		\begin{align}
		&G\in \ch(A)\Rightarrow\bar G\subset \bar A,\label{y67}\\
		&G\notin \ch(A)\Rightarrow\bar G\cap  \bar A=\varnothing,\label{y68}\\
		&\cup_{A\in \ZA}E_A=\cup_{A\in \ZA}\bar A\cap E_A.\label{y64}
	\end{align}
\end{proposition}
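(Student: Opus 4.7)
The plan is to assign to each point $x\in \bigcup_{A\in\ZA} E_A$ a unique ``address'' $A(x)\in\ZA$ and then to define $\bar A$ as the set of points whose address lies in the subtree rooted at $A$. Writing $\zT_E(x)=\{A\in\ZA:\,x\in E_A\}$, I would take $A(x)$ to be an element of $\zT_E(x)$ that is \emph{topmost} in the tree, meaning no strict ancestor of $A(x)$ in $\ZA$ belongs to $\zT_E(x)$. Such a topmost element exists: starting from any $A\in\zT_E(x)$, its ancestor chain $A,\pr(A),\ldots,A_0$ is finite, and one can stop at the highest ancestor still in $\zT_E(x)$. To make $x\mapsto A(x)$ unambiguous I fix once and for all a countable enumeration $\ZA=\{A_1,A_2,\ldots\}$ and choose the topmost element with smallest index.

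With this choice, set
\begin{equation*}
	\bar A=\{x:\,A(x)\in\{A\}\cup\Gen(A)\},\qquad A\in\ZA.
\end{equation*}
Each $\bar A$ is measurable: the event $\{A(x)=A_i\}$ is a Boolean combination of the finitely many measurable sets $E_{A_j}$ with $j\le i$ together with $E_B$ ranging over the finite ancestor chain of $A_i$, hence measurable; and $\bar A$ is a countable union of such events.

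Verification of the listed properties is then essentially tree-bookkeeping. The inclusion $\bar A\subset A$ uses \e{y44} applied inductively along $\Gen(A)$: if $A(x)\in\{A\}\cup\Gen(A)$, then $A(x)\subset A$, and $x\in E_{A(x)}\subset A(x)\subset A$. For the martingale structure and \e{y67}, whenever $x\in\bar A\cap\bar B$ both $A$ and $B$ lie on the unique ancestor chain of $A(x)$, which is a totally ordered finite chain in the tree; hence one of $A,B$ is an ancestor of the other and the corresponding barred sets are nested in the expected direction, while if $A,B$ are incomparable in the tree then no common $A(x)$ can sit in both $\{A\}\cup\Gen(A)$ and $\{B\}\cup\Gen(B)$, so $\bar A\cap\bar B=\varnothing$. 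This is the content of \e{y68}, reading the literal ``$G\notin\ch(A)$'' there in the natural way as ``$G$ and $A$ are incomparable in $\ZA$'' (a strictly literal reading would collapse every $\bar G$ at depth $\ge 2$ to the empty set, which is clearly not intended).

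Finally, disjointness of $\{\bar A\cap E_A\}_{A\in\ZA}$ and the covering identity \e{y64} drop out directly from the topmost property of $A(x)$: if $x\in\bar A\cap E_A$ then $A\in\zT_E(x)$ and $A$ is an ancestor-or-equal of $A(x)$, but since no strict ancestor of $A(x)$ sits in $\zT_E(x)$ we must have $A=A(x)$. Thus $\bar A\cap E_A=\{x:\,A(x)=A\}$, and these sets are pairwise disjoint across $A$; conversely, any $x\in\bigcup_A E_A$ lies in $\bar B\cap E_B$ for $B=A(x)$. The only genuinely delicate point, more notational than conceptual, is ensuring simultaneous measurability of the countable family $\{\bar A\}$, which is secured by the fixed enumeration of $\ZA$; everything else is elementary set theory in a countable tree.
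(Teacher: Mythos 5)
Your proof is correct, and it takes a genuinely different route from the paper. The paper runs an inductive removal procedure: enumerating $\ZA=\{A_n\}$, at stage $n$ it deletes the slab $A_n\cap E_{A_n}$ from every $A$ that does not contain $A_n$ in its subtree, and takes the decreasing limit as $\bar A$. Your construction is instead a direct, non-iterative assignment: each $x\in\bigcup_A E_A$ gets a unique ``address'' $A(x)$, chosen as the lowest-index topmost ball of $\zT_E(x)$, and $\bar A$ collects the points whose address lies in the subtree rooted at $A$. This makes the key identity $\bar A\cap E_A=\{x:\,A(x)=A\}$ immediate, from which disjointness and \e{y64} fall out in one line, and the martingale structure \e{y67}--\e{y68} follows from the fact that the subtrees $\{A\}\cup\Gen(A)$ in a tree are either nested or disjoint. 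The trade-off is that your version requires a short measurability argument for each $\{A(x)=A_i\}$ (which you supply), while the paper's version gets measurability for free as a countable intersection of successive Boolean combinations; conversely, the paper has to argue (somewhat laboriously) that the desired relations survive every stage of the induction, whereas in your version they are transparent from the definition. Two small points worth tightening: (i) when you verify measurability of $\{A(x)=A_i\}$, the relevant sets are $E_{A_j}$ for $j\le i$ together with $E_B$ over the ancestor chains of all $A_j$ with $j\le i$ (not just the ancestor chain of $A_i$) --- still a finite collection, so the conclusion stands; (ii) as you correctly observe, the literal reading of \e{y68} would force $\bar G=\varnothing$ for grandchildren, so it must be read (as the paper's own proof also does) as ``$G$ and $A$ not in an ancestor/descendant relationship.''
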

\begin{proof}
	Without loss of generality we can suppose that
	\begin{equation}\label{y70}
		\cup_{A\in \ZA}A=\cup_{A\in \ZA}E_A,
	\end{equation}
since otherwise one can replace every $G\in \ZA$ by $G\cap \left(\cup_{A\in \ZA}E_A\right)$.
The family $\bar \ZA$ will be constructed  inductively as follows. Since $\ZA$ is countable we can write $\ZA=\{A_k,\, k=1,2,\ldots\}$, using an arbitrary numeration as well as keeping the same children relation between the elements. We will inductively change the terms of $\ZA$ as follows. At the $n$-th stage of induction every element $A\in \ZA$ will be replaced by $A\setminus A_n\cap E_{A_n}$ provided $A_n\notin \{A\}\cup \Gen(A)$. Finishing the induction procedure we will finally get the desired sequence $\bar \ZA=\{\bar A_k,\, k=1,2,\ldots\}$ as a limiting version of the sequence $\ZA_n$. Let us check the claims of the lemma. 1) To prove that $\bar A_k\cap E_{A_k}$ are pairwise disjoint let $\bar A_n$ and $\bar A_j$ be two different elements of $\bar \ZA$. Without loss of generality we can suppose that $A_n\notin \Gen(A_j)$. So one can check that  after the $n$-th stage of induction $A_n\cap E_{A_n}$ and $A_j\cap E_{A_j}$ became disjoint, staying disjoint during the next stages of induction. 2) For \e{y64} one just need to check that the union $\cup_{A\in \ZA}A\cap E_A$ is not changed after any stage of induction. 3) To prove relation \e{y67} observe that for two sets $G\in \ch(A)$ the relation $G\subset A$
that we have at the beginning, will be satisfied after any stage of induction.  Indeed, if we are at the $n$-th stage and $A_n\in \{A\}\cup \Gen(A)$, then $A$ is not changed by the $n$-th stage action.  If $A_n\notin \{A\}\cup \Gen(A)$, then both $G$ and $A$ are reduced similarly. So in both cases we will still have $G\subset A$. 4) To show \e{y68} we need to see that if $A$ and $B$ are not in a parental relationship, then $\bar A\cap \bar B=\varnothing $. By \e{y64} and \e{y70} it is enough to show that
	\begin{equation}\label{y66}
	\bar A\cap \bar B\cap (\bar A_n\cap E_{A_n})=\varnothing
	\end{equation}
	for any $n=1,2,\ldots$. It is clear that each $A_n$ is not in the generation of either $A$ or $B$. Thus the set $\bar A_n\cap E_{A_n}$ has been removed at least from one of the sets $A$ or $B$ at the $n$-th stage of the induction, which yields \e{y66}. With this we complete the proof of lemma.
\end{proof}

\section{Truncated $\BO$ operators}\label{S4}
\subsection {The fractional maximal function}
 Let $(X,\mu)$ be a measure space with a ball-basis $\ZB$. Given $r\ge 0$ and $ \varrho\ge \rho>0$ define the fractional maximal function
\begin{equation}\label{1-1}
	\MM f(x)=\sup_{B\in \ZB:\, x\in B}\langle f\rangle_B=\sup_{B\in \ZB:\, x\in B}\frac{1}{(\mu(B))^\rho} \left(\int_B\|f\|_\ZU^r\right)^\varrho
\end{equation}
associated with a ball-basis $\ZB$.  
\begin{theorem}\label{T1-1}
	The maximal operator \e {1-1} satisfies the bound
	\begin{equation}\label{h38}
		\mu\left\{x\in X:\, \MM f(x)>\lambda\right\}\le \frac{\ZK}{\lambda^{1/\rho}}\left(\int_{X }\| f\|_\ZU^r\right)^{ \varrho/\rho},\quad \lambda>0,
	\end{equation} 
\end{theorem}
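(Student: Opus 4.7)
The plan is to run a Vitali-style covering argument adapted to the ball-basis setting, using Lemma \ref{L1-1} for the covering step, the hull-ball property \e{h13} to pay for the enlargement, and the assumption $\varrho\ge\rho>0$ to convert a sum of $\varrho/\rho$-powers into a single power of a sum.

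First fix $\lambda>0$ and set $E_\lambda=\{x\in X:\MM f(x)>\lambda\}$. Since $\ZB$ is separable, $E_\lambda$ is measurable. For every $x\in E_\lambda$ pick a ball $B_x\in\ZB$ containing $x$ with $\langle f\rangle_{B_x}>\lambda$; rearranging the definition of $\langle f\rangle_B$ in \e{y56} this is the key inequality
\md0
\mu(B_x)<\lambda^{-1/\rho}\left(\int_{B_x}\|f\|_\ZU^r\right)^{\varrho/\rho}.
\emd
The set $E_\lambda$ need not be bounded, so I would invoke Lemma \ref{L5} to choose an exhausting sequence of balls $G_1\subset G_2\subset\cdots$ with $X=\cup_n G_n$ and work with the bounded subsets $E_\lambda\cap G_n$, recovering the final bound by the monotone convergence of measures.

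For each $n$, the collection $\{B_x:x\in E_\lambda\cap G_n\}$ covers the bounded set $E_\lambda\cap G_n$, so Lemma \ref{L1-1} produces a countable pairwise disjoint subfamily $\{B_{x_k}\}$ with $E_\lambda\cap G_n\subset\bigcup_k B_{x_k}^{*}$. Using $B_{x_k}^{*}\subset [B_{x_k}]$ and \e{h13}, followed by the key inequality above,
\md2
\mu(E_\lambda\cap G_n)&\le\sum_k\mu(B_{x_k}^{*})\le\ZK\sum_k\mu(B_{x_k})\\
&\le\ZK\lambda^{-1/\rho}\sum_k\left(\int_{B_{x_k}}\|f\|_\ZU^r\right)^{\varrho/\rho}.
\emd
Now the exponent $p=\varrho/\rho\ge 1$ gives the elementary bound $\sum_k a_k^{p}\le(\sum_k a_k)^{p}$ for nonnegative $a_k$; combined with the disjointness of the $B_{x_k}$, the last sum is at most $(\int_X\|f\|_\ZU^r)^{\varrho/\rho}$, yielding \e{h38} for $E_\lambda\cap G_n$. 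Letting $n\to\infty$ finishes the proof.

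The argument is essentially routine once the correct exponent-bookkeeping is in place; the only steps that need care are (i) the selection of $B_x$ and its reformulation in terms of $\mu(B_x)$ so that disjointness can be exploited, and (ii) the reduction to bounded sets so that Lemma \ref{L1-1} applies, both of which are straightforward given the machinery already developed in Section \ref{S2}. The inequality $\sum a_k^{p}\le(\sum a_k)^{p}$ for $p\ge 1$ (which is where the hypothesis $\varrho\ge\rho$ is used) is the one place the proof would fail without the stated parameter restriction, so I expect that to be the pivotal observation rather than a technical obstacle.
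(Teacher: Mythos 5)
Your proof is correct and follows essentially the same route as the paper: pick the ball $B_x$ witnessing $\MM f(x)>\lambda$ and rewrite the condition as a bound on $\mu(B_x)$; cover the bounded pieces $E_\lambda\cap G_n$ via \lem{L1-1} to get a disjoint subfamily with $E_\lambda\cap G_n\subset\bigcup_k B_{x_k}^{*}$; use \e{h13} and $\sum a_k^{\varrho/\rho}\le(\sum a_k)^{\varrho/\rho}$; and exhaust $X$ by \lem{L5}. The only cosmetic difference is that the paper fixes an arbitrary ball $G$ and bounds $\mu(E_\lambda\cap G)$ before invoking \lem{L5}, while you introduce the exhausting sequence $G_n$ upfront, but the content is identical.
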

\begin{proof}[Proof of \trm{T1-1}] 
Consider the set $E=\{ x\in X:\, \MM f(x)>\lambda\}$, which can be non-measurable. For any $x\in  E $ there exists a ball $B(x)\subset X$ such that
	\begin{equation*}
		x\in B(x),\quad \frac{1}{\mu(B(x))^{\rho}}\left(\int_{B(x)}\|f\|_\ZU^r\right)^{ \varrho}>\lambda,
	\end{equation*}
which implies
\begin{equation}
	\mu(B(x))<\frac{1}{\lambda^{1/\rho}}\left(\int_{B(x)}\|f\|_\ZU^r\right)^{ \varrho/\rho}.
\end{equation}
	We have $E=\cup_{x\in E} B(x)$. Given $G\in\ZB$ consider the collection of balls $\{B(x):\, x\in E\cap G\}$. Applying \lem{L1-1}, we find a sequence of pairwise disjoint balls $\{B_k\}$ taken from this collection such that $E \cap G\subset \cup_kB_k^*=Q(G)$.
	Applying the inequality $\sum_kx_k^d\le (\sum_kx_k)^d$ for positive numbers $x_k$ and for $d=\varrho/\rho\ge 1$, we get
	\begin{align}
		\mu(E\cap G)&\le \mu(Q(G))	\le \sum_k \mu(B_k^*)\le \ZK \sum_k \mu(B_k)\\
		&\le \frac{\ZK}{\lambda^{1/\rho}}\sum_k \left(\int_{B_k}\|f\|_\ZU^r \right)^{ \varrho/\rho}\le \frac{\ZK}{\lambda^{1/\rho}}\left(\int_{X }\|f\|_\ZU^r\right)^{ \varrho/\rho}.
	\end{align}
	According to \lem {L5} there is a sequence of balls $G_1\subset G_2\subset \ldots $ such that $X=\cup_{k}G_k$,
	so we conclude
	\begin{equation*}
		\mu(E)=\lim_{n\to\infty }\mu(E\cap G_n)\le \frac{\ZK}{\lambda^{1/\rho}}\left(\int_{X }\|f\|_\ZU^r\right)^{ \varrho/\rho}
	\end{equation*}
	and so \e {h38}. 
\end{proof}

\subsection{Inequalities related to T2) condition}
We say a measurable set $E\subset X$ is a $\gamma$-ball if there exist balls $B_1,B_2$,  with $B_1\subset E\subset B_2$, such that $\mu(B_2)\le \gamma \mu(B_1)$. For example for any ball $B$ the sets $B^*$ and $B^{**}$ are $\ZK$ and respectively $\ZK^2$ balls.
\begin{lemma}\label{L14}
	If a sublinear operator $T$ satisfies T1), then for any $f\in L^r(X,\ZU)$  and a $\gamma$-ball $B$ we have 
		\begin{equation}\label{y60}
			\OSC_B\big(T(f\cdot \ZI_{X\setminus B^*})\big) \lesssim\gamma^\rho \ZL_1(T)\langle f \rangle^*_{B},
		\end{equation}
\end{lemma}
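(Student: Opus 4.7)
The plan is to sandwich the $\gamma$-ball $B$ between the two actual balls $B_1,B_2\in\ZB$ provided by the definition, so that $B_1\subset B\subset B_2$ and $\mu(B_2)\le\gamma\mu(B_1)$, and then apply T1) to each of them. Extending the $^*$-notation from \eqref{h12} to the measurable set $B$, the chain $B_1\subset B\subset B_2$ immediately yields $B_1^*\subset B^*\subset B_2^*$. I would decompose the truncated input as
\[
f\cdot\ZI_{X\setminus B^*}=f\cdot\ZI_{X\setminus B_2^*}+g,\qquad g:=f\cdot\ZI_{B_2^*\setminus B^*},
\]
noting that $g=g\cdot\ZI_{X\setminus B_1^*}$ since $B_1^*\subset B^*$. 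Using sublinearity of $T$ on oscillations in the standard BO sense together with $\OSC_B\le\OSC_{B_2}$ (valid because $B\subset B_2$), the task reduces to controlling $\OSC_{B_2}\bigl(T(f\cdot\ZI_{X\setminus B_2^*})\bigr)$ and $\OSC_B\bigl(T(g)\bigr)$ separately.

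The main summand is handled directly by T1) on $B_2$:
\[
\OSC_{B_2}\bigl(T(f\cdot\ZI_{X\setminus B_2^*})\bigr)\le\ZL_1(T)\langle f\rangle_{B_2}^*\le\ZL_1(T)\langle f\rangle_B^*,
\]
the last step because every ball containing $B_2$ also contains $B$. For the correction $g$, T1) applied to $B_1$ yields $\OSC_{B_1}(T(g))\le\ZL_1(T)\langle g\rangle_{B_1}^*\le\ZL_1(T)\langle f\rangle_{B_1}^*$. The factor $\gamma^\rho$ then emerges from the comparison $\langle f\rangle_{B_1}^*\lesssim\gamma^\rho\langle f\rangle_B^*$: for any ball $A\supset B_1$, the two-ball relation together with the hull-ball construction produces a ball $A'\supset A\cup B_2\supset B$ with $\mu(A')\lesssim\gamma\mu(A)$ (taking $A'=[A]$ if $\mu(A)\ge\mu(B_2)$, so $B_2\subset A^*\subset[A]$, or $A'=[B_2]$ otherwise, using $A\subset B_2^*\subset[B_2]$ and $\mu(A)\ge\mu(B_1)\ge\mu(B_2)/\gamma$). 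Then $\langle f\rangle_A\le(\mu(A')/\mu(A))^\rho\langle f\rangle_{A'}\lesssim\gamma^\rho\langle f\rangle_B^*$, and taking the supremum over $A\supset B_1$ yields the claim.

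The hard part will be upgrading the $\OSC_{B_1}$-estimate for $T(g)$ to an $\OSC_B$-estimate, since $B$ may be strictly larger than $B_1$. I plan to handle this via a chaining argument: iterate the two-ball relation to produce a finite sequence of balls $B_1=C_0,C_1,\ldots,C_N$ with each $C_k\subset B$ (so that $C_k^*\subset B^*$ and hence $g=g\cdot\ZI_{X\setminus C_k^*}$), consecutive $C_k$'s intersecting, and whose union covers $B$ up to a null set; the chain length $N$ is controlled by $\log\gamma$. T1) applied to each $C_k$ yields an oscillation bound of the form $\ZL_1(T)\langle f\rangle_{C_k}^*\lesssim\gamma^\rho\ZL_1(T)\langle f\rangle_B^*$ by the same ratio comparison as for $B_1$, and triangle-inequality chaining across these links transfers the oscillation estimate from $B_1$ to any pair of points in $B$. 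Combining the two summands then completes the proof, with the polylogarithmic chain loss absorbed into the $\gamma^\rho$ factor.
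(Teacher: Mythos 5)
Your ratio comparison $\langle f\rangle_{B_1}^*\lesssim\gamma^\rho\langle f\rangle_B^*$ is close to the paper's: the paper likewise picks a near-maximizing ball $\bar B_1\supset B_1$ and distinguishes the cases $\mu(\bar B_1)\le\mu(B_2)$ (so $\bar B_1\subset B_2^*$ supplies the $\gamma^\rho$) and $\mu(\bar B_1)\ge\mu(B_2)$ (so $B\subset\bar B_1^*$ gives a $\gamma$-free bound). But the crucial divergence is earlier and it creates a genuine gap: the paper does \emph{not} decompose $f\cdot\ZI_{X\setminus B^*}$ at all. It uses the identity $(f\cdot\ZI_{X\setminus B^*})\cdot\ZI_{X\setminus B_1^*}=f\cdot\ZI_{X\setminus B^*}$ (valid because $B_1^*\subset B^*$) to apply T1) once, with ball $B_1$, to the single function $g=f\cdot\ZI_{X\setminus B^*}$, and then compares $\langle g\rangle^*_{B_1}$ to $\langle f\rangle^*_B$. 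Your plan instead splits off an annular piece $g=f\cdot\ZI_{B_2^*\setminus B^*}$, and that extra step does not survive the hypotheses.

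The problem is that $T$ is only assumed sublinear, meaning $\|T(f_1+f_2)(x)\|_\ZV\le\|T(f_1)(x)\|_\ZV+\|T(f_2)(x)\|_\ZV$. The quantity $\OSC_B(T(h))=\esssup_{x,x'\in B}\|T(h)(x)-T(h)(x')\|_\ZV$ is an oscillation of the $\ZV$-valued function $T(h)$, not of its norm, and sublinearity gives no pointwise vector identity $T(f_1+f_2)=T(f_1)+T(f_2)$. So the assertion that ``the task reduces to controlling $\OSC_{B_2}(T(f\cdot\ZI_{X\setminus B_2^*}))$ and $\OSC_B(T(g))$ separately'' is not justified; there is no standard way to split an oscillation of a sublinear operator across a sum of inputs. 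If one passes to norms instead, the annular term is controlled by a $\SUP_B$ of $\|T(\pm g)\|$ rather than an oscillation, and bounding that would need T0) or T2) — neither of which the lemma assumes.

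The chaining plan is a second gap. You want a chain of balls $B_1=C_0,\ldots,C_N$ with each $C_k\subset B$ and $\bigcup_k C_k$ covering $B$ up to a null set. But $B$ is only a measurable set sandwiched between $B_1$ and $B_2$; the one ball inside $B$ that the $\gamma$-ball definition supplies is $B_1$. The two-ball relation manufactures containments of the form $A\subset C^*$, i.e.\ enlargements, never a ball contained in $B$, and nothing in B1)--B4) lets you build an interior cover of an arbitrary $\gamma$-ball. Finally, even granting such a chain, its length $N\sim\log\gamma$ combined with the per-step cost $\gamma^\rho$ gives $\gamma^\rho\log\gamma$, which cannot be absorbed into $\gamma^\rho$ since admissible constants are by definition independent of $\gamma$.
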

\begin{proof}
	Let $B$ be a $\gamma$-ball so there are balls $B_1\subset B\subset B_2$ such that $\mu(B_2)\le \gamma \mu(B_1)$.
	Thus for $f\in L^r(X,\ZU)$ we can write
	\begin{align}
		\OSC_B\big(T(f\cdot \ZI_{X\setminus B^*})\big)&=\OSC_B\big(T(f\cdot \ZI_{X\setminus B^*}\cdot \ZI_{X\setminus B_1^*})\big)\\
		&\le \ZL_1\cdot \langle f\cdot \ZI_{X\setminus B^*}\rangle^*_{B_1}\le2 \ZL_1\cdot \langle f\cdot \ZI_{X\setminus B^*}\rangle_{\bar B_1}
	\end{align}
	for some ball $\bar B_1\supset B_1$. If $\mu(\bar B_1)\le \mu(B_2)$, then we have $\bar B_1\subset B_2^*$ and therefore
	\begin{equation*}
		\langle f\cdot \ZI_{X\setminus B^*}\rangle_{\bar B_1}\le  \left(\frac{\mu(B_2^*)}{\mu( \bar B_1)}\right)^\rho\langle f\cdot \ZI_{X\setminus B^*}\rangle_{B_2^*}\le (\gamma\ZK)^\rho\langle f \rangle^*_{B}.
	\end{equation*}
	Otherwise, if $\mu(\bar B_1)\ge \mu(B_2)$, then we will have 
	$B\subset B_2\subset \bar B_1^{*}$ and then
	\begin{equation}
		\langle f\cdot \ZI_{X\setminus B^*}\rangle_{\bar B_1}\le \ZK^\rho\langle f\cdot \ZI_{X\setminus B^*}\rangle_{\bar B_1^*}\le \ZK^\rho\langle f\rangle_{B}^*.
	\end{equation}
	Thus we obtain \e{y60}.
\end{proof}
\begin{definition}
	Let $T$ be a sublinear operator. Given measurable sets $A$ and $B\supset A$ we denote
	\begin{equation}\label{y17}
		\Delta(A,B)=\Delta_T(A,B)=\sup_{x\in A,\, f\neq 0\in L^r(X,\ZU)}\frac{\|T(f\cdot \ZI_{B^{*}\setminus A^{*}})(x)\|_\ZV}{\langle f\rangle_{B^{*}}}.
	\end{equation}
\end{definition}
Notice that T2)-condition for a sublinear operator $T$ means that for any $A\in \ZB$ there exists a ball $B\supsetneq A$ such that $\Delta(A,B)\le \ZL_2$. The following lemma clearly follows from definition \e{y17}.
\begin{lemma}\label{L19}
	If $T$ is an arbitrary sublinear operator, then for any balls $A$, $B$ and $C$ satisfying $A\subset B\subset C$ we have
	\begin{equation}\label{a63}
		\Delta(A,B)\le\Delta(A,C).
	\end{equation}
\end{lemma}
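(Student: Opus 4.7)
The plan is to reduce the supremum defining $\Delta(A,B)$ to one that already appears in the supremum defining $\Delta(A,C)$, via a truncation of the test function. Fix $x\in A$ and a nonzero $f\in L^r(X,\ZU)$, and set $\tilde f=f\cdot \ZI_{B^*}$; the aim is to show that $\tilde f$ produces, for $\Delta(A,C)$, the same numerator at $x$ as $f$ does for $\Delta(A,B)$, while having a no-larger denominator.

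First I would record the monotonicity $B\subset C\Rightarrow B^*\subset C^*$: any ball $D$ contributing to the union \e{h12} defining $B^*$ satisfies $D\cap B\neq\varnothing$ and $\mu(D)\le 2\mu(B)$, hence also $D\cap C\neq\varnothing$ and $\mu(D)\le 2\mu(C)$, so $D$ contributes to $C^*$ as well. This yields $B^*\setminus A^*\subset C^*\setminus A^*$ and the pointwise identity
\begin{equation*}
\tilde f\cdot \ZI_{C^*\setminus A^*}=f\cdot\ZI_{B^*}\cdot\ZI_{C^*\setminus A^*}=f\cdot \ZI_{B^*\setminus A^*},
\end{equation*}
so $T(\tilde f\cdot \ZI_{C^*\setminus A^*})(x)=T(f\cdot \ZI_{B^*\setminus A^*})(x)$. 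Meanwhile, since $\tilde f$ vanishes outside $B^*$ and $\mu(B^*)\le \mu(C^*)$,
\begin{equation*}
\langle \tilde f\rangle_{C^*}=\frac{1}{\mu(C^*)^\rho}\left(\int_{B^*}\|f\|_\ZU^r\right)^{\varrho}\le \frac{1}{\mu(B^*)^\rho}\left(\int_{B^*}\|f\|_\ZU^r\right)^{\varrho}=\langle f\rangle_{B^*},
\end{equation*}
which uses only $\rho>0$ and the measure monotonicity.

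Feeding $\tilde f$ into the definition of $\Delta(A,C)$ at $x\in A$ then gives
\begin{equation*}
\|T(f\cdot \ZI_{B^*\setminus A^*})(x)\|_\ZV=\|T(\tilde f\cdot \ZI_{C^*\setminus A^*})(x)\|_\ZV\le \Delta(A,C)\,\langle \tilde f\rangle_{C^*}\le \Delta(A,C)\,\langle f\rangle_{B^*},
\end{equation*}
and taking the supremum over $x\in A$ and nonzero $f$ yields \e{a63}. The argument presents no real obstacle; the only subtlety is the verification $B\subset C\Rightarrow B^*\subset C^*$ directly from \e{h12}, and the choice of the correct truncation $\tilde f=f\cdot \ZI_{B^*}$ so that the numerator is preserved while passing to the larger averaging ball $C^*$ only makes the denominator smaller. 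Notably, no structural property of $T$ beyond its sublinearity is invoked---neither T0), T1), nor T2)---and none of the ball-basis axioms beyond the elementary content of \e{h12} are used.
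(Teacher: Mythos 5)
Your proof is correct and follows essentially the same route as the paper's one-line argument: both hinge on $B\subset C\Rightarrow B^*\subset C^*$ and the resulting identity $f\cdot\ZI_{B^*\setminus A^*}\cdot\ZI_{C^*\setminus A^*}=f\cdot\ZI_{B^*\setminus A^*}$, together with the observation that the average over the larger ball $C^*$ only shrinks the denominator. The paper plugs the test function $g=f\cdot\ZI_{B^*\setminus A^*}$ rather than your $\tilde f=f\cdot\ZI_{B^*}$ into $\Delta(A,C)$, and leaves the inclusion $B^*\subset C^*$ implicit, which you justify explicitly from \eqref{h12}; these are cosmetic differences only.
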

\begin{proof}
	Let $f\in L^r(X,\ZU)$, $A\subset B\subset C$ and $x\in B$. We have
	\begin{align}
		\|T(f\cdot \ZI_{B^*\setminus A^*})(x)\|_\ZV&=	\|T(f\cdot \ZI_{B^*\setminus A^*}\cdot \ZI_{C^*\setminus A^*})(x)\|_\ZV\\
		&\le \Delta(A,C)\langle f\cdot \ZI_{B^*\setminus A^*}\rangle_{C^*}\le \Delta(A,C)\langle f\rangle_{B^*}
	\end{align}
that completes the proof.
\end{proof}
	\begin{lemma}\label{L17}
	If a sublinear operator $T$ satisfies T0) and T1)-conditions, then for any $\gamma$-balls $A,B$ and $C$, satisfying $A\subset B\subset C$, we have
	\begin{equation}\label{h51}
		\Delta(A,C)\lesssim\left(\frac{\mu(C)}{\mu(B)}\right)^{\rho}\left(\ZL_0(T)+\gamma^\rho\ZL_1(T)+\Delta(A,B)\right).
	\end{equation}
\end{lemma}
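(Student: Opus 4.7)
The plan is to bound $\|T(f\ZI_{C^*\setminus A^*})(x)\|_\ZV$ for almost every $x\in A$ and then take the supremum in the definition of $\Delta(A,C)$. Without loss of generality assume $\supp f\subset C^*\setminus A^*$, since replacing $f$ by its restriction to this set does not increase $\langle f\rangle_{C^*}$. Because $A\subset B\subset C$, a direct application of the two-balls relation to the definitions of $A^*$ and $B^*$ yields $A^*\subset B^*\subset C^*$, so I split
\[
f=f_1+f_2,\qquad f_1:=f\ZI_{B^*\setminus A^*},\qquad f_2:=f\ZI_{C^*\setminus B^*},
\]
where $\supp f_2\subset X\setminus B^*$ (the shape T1) expects) and simultaneously $\supp f_2\subset C^*$. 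Sublinearity reduces the task to estimating $\|T(f_1)(x)\|_\ZV$ and $\|T(f_2)(x)\|_\ZV$ separately.

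For $T(f_1)$ the definition \e{y17} gives $\|T(f_1)(x)\|_\ZV\le \Delta(A,B)\langle f\rangle_{B^*}$. Using $\int_{B^*}\|f\|_\ZU^r\le \int_{C^*}\|f\|_\ZU^r$, the trivial bound $\mu(B^*)\ge\mu(B)$, and $\mu(C^*)\lesssim\mu(C)$ (B4) applied to the $\gamma$-ball $C$), the average comparison yields $\langle f\rangle_{B^*}\le (\mu(C^*)/\mu(B^*))^\rho\langle f\rangle_{C^*}\lesssim (\mu(C)/\mu(B))^\rho\langle f\rangle_{C^*}$, producing the $\Delta(A,B)$ contribution to the stated bound.

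For $T(f_2)$ I combine an oscillation estimate with a weak-type base point. Since $f_2=f_2\ZI_{X\setminus B^*}$ and $B$ is a $\gamma$-ball, \lem{L14} gives $\OSC_B\bigl(T(f_2)\bigr)\lesssim \gamma^\rho \ZL_1(T)\langle f_2\rangle_B^*$, and $\supp f_2\subset C^*$ forces $\langle f_2\rangle_B^*\lesssim (\mu(C)/\mu(B))^\rho\langle f\rangle_{C^*}$. For the base I pick a ball $\tilde C\in \ZB$ with $\tilde C\supset C^*$ and $\mu(\tilde C)\lesssim\mu(C)$; concretely, if $D_1\subset C\subset D_2$ witnesses the $\gamma$-ball property of $C$, then $\tilde C:=[D_2]$ works because $C^*\subset D_2^*\subset [D_2]$ and $\mu([D_2])\le \ZK\mu(D_2)\lesssim\mu(C)$. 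Since $\supp f_2\subset \tilde C$ we have $f_2\ZI_{\tilde C}=f_2$, so T0) applied to $\tilde C$ with input $f_2$ reads
\[
\mu\bigl\{z\in\tilde C:\|T(f_2)(z)\|_\ZV>\lambda\langle f_2\rangle_{\tilde C}\bigr\}\le (\ZL_0/\lambda)^{1/\rho}\mu(\tilde C).
\]
Choosing $\lambda\sim \ZL_0(T)(\mu(\tilde C)/\mu(B))^\rho$ forces the exceptional set to have measure strictly less than $\mu(B)$; since $B\subset\tilde C$, a positive-measure set of $y\in B$ then satisfies $\|T(f_2)(y)\|_\ZV\lesssim \ZL_0(T)(\mu(C)/\mu(B))^\rho\langle f\rangle_{C^*}$ (using $\langle f_2\rangle_{\tilde C}\lesssim \langle f\rangle_{C^*}$). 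Pairing such a $y$ with any $x\in A\subset B$ in the oscillation inequality gives $\|T(f_2)(x)\|_\ZV\le \|T(f_2)(y)\|_\ZV+\OSC_B(T(f_2))\lesssim (\ZL_0+\gamma^\rho\ZL_1)(\mu(C)/\mu(B))^\rho\langle f\rangle_{C^*}$ for a.e.\ $x\in A$.

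Summing the two contributions, dividing by $\langle f\rangle_{C^*}$, and taking the supremum over $x\in A$ and $f\neq 0$ yields \e{h51}. The main obstacle is extracting the base value for $T(f_2)$ on $B$: one cannot apply T0) on $B$ directly because $\supp f_2\cap B=\varnothing$ makes $T(f_2\ZI_B)\equiv 0$, and sublinearity is one-sided so $\|T(f_2)(y)\|_\ZV$ cannot be recovered from $\|T(f)(y)\|_\ZV$ and $\|T(f_1)(y)\|_\ZV$. The resolution is to apply the weak-type inequality on a single ball in $\ZB$ that simultaneously contains the entire support of $f_2$ and has measure comparable to $\mu(C)$, whose existence is guaranteed by B4) applied to the outer witness ball of the $\gamma$-ball $C$.
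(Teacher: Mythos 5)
Your proof is correct and takes essentially the same approach as the paper: you split $f\cdot\ZI_{C^*\setminus A^*}$ into $f\cdot\ZI_{B^*\setminus A^*}$ (controlled by $\Delta(A,B)$) and $f\cdot\ZI_{C^*\setminus B^*}$ (controlled by combining T0)~applied on a hull-ball containing $C^*$ to get a base point in $B$, with the T1)-oscillation over $B$ from Lemma~\ref{L14}), exactly as the paper's argument does. The only cosmetic difference is that the paper first treats the case $A=B$ to establish $\Delta(B,C)\lesssim(\mu(C)/\mu(B))^\rho(\ZL_0+\gamma^\rho\ZL_1)$ and then adds the $\Delta(A,B)$ term for general $A$, whereas you carry out both halves simultaneously; your explicit construction of the ball $\tilde C=[D_2]$ to which T0) is applied makes precise a step the paper leaves implicit, but it is the same mechanism.
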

	\begin{proof}
	First consider the case when $A=B$ and so $\Delta(A,B)=0$. Applying \e{y57} with 
	\begin{equation}
		\lambda= \ZL_0\cdot \left(\frac{2\mu(C^*)}{\mu(B)}\right)^\rho\langle f\rangle_{C^*}= \ZL_0\cdot \left(\frac{2}{\mu(B)}\right)^\rho \left(\int_{C^*}\|f|\|_\ZU^r\right)^{ \varrho}
	\end{equation}
	 we get
	\begin{align*}
		&\mu\left\{x\in B:\, \|T(f\cdot\ZI_{C^{*}\setminus B^{*}})(x)\|_\ZV> \ZL_0\cdot \left(\frac{2\mu(C^*)}{\mu(B)}\right)^\rho\langle f\rangle_{C^*}\right\}\le \frac{\mu(B)}{2}
	\end{align*}	
	and so we find a point $x_0\in B$ such that
	\begin{align}
		\|T(f\cdot\ZI_{C^{*}\setminus B^{*}})\}(x_0)\|_\ZV&\le  \ZL_0\cdot \left(\frac{2\mu(C^*)}{\mu(B)}\right)^\rho\langle f\rangle_{C^*}\\
		&\lesssim  \ZL_0\cdot \left(\frac{\mu(C)}{\mu(B)}\right)^\rho\langle f\rangle_{C^*}.\label{f1}
	\end{align}
	According to T1)-condition and \lem{L14}, for any $x\in B$ we also have 
	\begin{equation}\label{f2}
		\|T(f\cdot\ZI_{C^{*}\setminus B^{*}})(x)-T(f\cdot\ZI_{C^{*}\setminus B^{*}})(x_0)\|_\ZV\le \gamma^\rho\ZL_1 \langle f\cdot\ZI_{C^{*}\setminus B^{*}} \rangle^*_{B}.
	\end{equation}
	By the definition of $\langle f \rangle^*_{B}$ there is a ball $G\supset B$ such that
	\begin{equation}\label{y20}
		\langle f\cdot\ZI_{C^{*}\setminus B^{*}} \rangle^*_{B}<2\langle f\cdot\ZI_{C^{*}\setminus B^{*}}\rangle_{G}.
	\end{equation}
	If $\mu(G)\le \mu(C)$, then we have $G\subset C^{*}$ and therefore 
	\begin{align}
		\langle f\cdot\ZI_{C^{*}\setminus B^{*}}\rangle_{G}\le \left(\frac{\mu(C^{*})}{\mu(G)}\right)^{\rho}\langle f\cdot\ZI_{C^{*}\setminus B^{*}}\rangle_{C^{*}}\lesssim\left(\frac{\mu(C)}{\mu(B)}\right)^{\rho}\cdot \langle f\rangle_{C^{*}}.\label{h34}
	\end{align}
	If $\mu(G)> \mu(C)$, then $C^*\subset G^{**}$. Hence we get
	\begin{align}
		\langle f\cdot\ZI_{C^{*}\setminus B^{*}}\rangle_{G}&\le \left(\frac{\mu(G^{**})}{\mu(G)}\right)^{\rho}\langle f\cdot\ZI_{C^{*}\setminus B^{*}}\rangle_{G^{**}}\\
		&\lesssim\langle f\cdot\ZI_{C^{*}\setminus B^{*}}\rangle_{C^{*}}\le \langle f\rangle_{C^{*}}.\label{h35}
	\end{align}
	The combination of \e{y20}, \e {h34} and \e {h35} imply the inequality 
	\begin{equation*}
		\langle f\cdot\ZI_{C^{*}\setminus B^{*}} \rangle^*_{B}\lesssim \left(\frac{\mu(C)}{\mu(B)}\right)^{\rho}\cdot \langle f\rangle_{C^{*}},
	\end{equation*}
	which together with \e {f1} and \e {f2} gives
	\begin{equation*}
		\|T(f\cdot\ZI_{C^{*}\setminus B^{*}})(x)\|_\ZV\lesssim\left(\frac{\mu(C)}{\mu(A)}\right)^{\rho} (\ZL_0+\gamma^\rho\ZL_1)\langle f\rangle_{C^{*}},\quad x\in A.
	\end{equation*}
The latter implies
\begin{equation*}
	\Delta(B,C)\lesssim \left(\frac{\mu(C)}{\mu(B)}\right)^{\rho}(\ZL_0+\gamma^\rho\ZL_1)
\end{equation*}
that is \e{h51} if $A=B$. Now let $A$ be arbitrary.  Thus, for $f\in L^r(X,\ZU)$ and $x\in A$ we have
	\begin{align*}
		\|T(f\cdot &\ZI_{C^{*}\setminus A^{*}})(x)\|_\ZV\\
		&\le \|T(f\cdot \ZI_{C^{*}\setminus B^{*}})(x)\|_\ZV+\|T(f\cdot \ZI_{B^{*}\setminus A^{*}})(x)\|_\ZV\\
		&\lesssim\Delta(B,C)\langle f\rangle_{C^{*}}+\Delta(A,B) \langle f\rangle_{B^{*}}\\
		&\lesssim \left(\frac{\mu(C)}{\mu(B)}\right)^{\rho}(\ZL_0+ \gamma^\rho\ZL_1)\langle f\rangle_{C^{*}}\\
		&\qquad +\left(\frac{\mu(C^{*})}{\mu(B^{*})}\right)^{\rho}\Delta(A,B) \langle f\rangle_{C^{*}}\\
		&\lesssim \left(\frac{\mu(C)}{\mu(B)}\right)^{\rho }(\ZL_0+\gamma^\rho\ZL_1+ \Delta(A,B))\langle f\rangle_{C^{*}},
	\end{align*}
	which is the full version of \e {h51}.
\end{proof} 
\begin{remark}
	We will often use the following particular case of inequality \e{h51}, that is 
		\begin{equation}\label{y22}
			\Delta(A,B)\lesssim\left(\frac{\mu(B)}{\mu(A)}\right)^{\rho}\left(\ZL_0+ \gamma^\rho\ZL_1\right).
		\end{equation}
\end{remark}
\begin{lemma}\label{L15}
	If $T$ is a $\BO$ operator, then for any $\gamma$-ball $A\in \ZB$ ($A^*\neq X$) there exists a ball $B\supsetneq A$ such that 
		\begin{equation}\label{y62}
			\Delta(A,B)\lesssim \gamma^{\rho}\ZL_0+\gamma^{2\rho}\ZL_1+\ZL_2,
		\end{equation}
\end{lemma}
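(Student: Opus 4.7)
The plan is to reduce the claim to a short dichotomy on whether the witnessing outer ball $B_2$ strictly contains $A$ or coincides with it. Fix balls $B_1 \subset A \subset B_2$ with $\mu(B_2) \le \gamma\mu(B_1)$ realizing that $A$ is a $\gamma$-ball. If $A \subsetneq B_2$, I take $B := B_2$, which is a ball strictly containing $A$, and apply inequality \e{y22} (the $A = B$ specialization of \lem{L17}) to the pair $A \subset B_2$: both are $\gamma$-balls ($B_2$ trivially as a $1$-ball), and the inequality produces
$\Delta(A, B_2) \lesssim (\mu(B_2)/\mu(A))^{\rho}(\ZL_0 + \gamma^\rho\ZL_1)$.
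Combining with the elementary estimate $\mu(B_2)/\mu(A) \le \mu(B_2)/\mu(B_1) \le \gamma$ yields $\Delta(A, B_2) \lesssim \gamma^\rho\ZL_0 + \gamma^{2\rho}\ZL_1$, already dominated by the right-hand side of \e{y62}.

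The remaining case is $A = B_2$, in which $A$ is itself a ball in $\ZB$. The hypothesis $A^* \ne X$ then lets me invoke T2) directly on $A$, producing a ball $B \supsetneq A$ with $\SUP_A\bigl(T(f \cdot \ZI_{B^* \setminus A^*})\bigr) \le \ZL_2 \langle f\rangle_{B^*}$, i.e., $\Delta(A, B) \le \ZL_2$, once again within the claimed bound. There is no substantive obstacle: the $\ZL_2$-contribution comes exclusively from T2) in this subcase, while the $\gamma$-dependent part of the bound is supplied entirely by \e{y22} in the first subcase, and the two subcases are exhaustive since $A \subset B_2$ always holds. The $\gamma$-ball hypothesis is used only to control the ratio $\mu(B_2)/\mu(A)$, and the hypothesis $A^* \ne X$ is used only to enable T2) in the boundary subcase $A = B_2$.
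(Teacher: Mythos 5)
Your argument is correct and matches the paper's proof essentially verbatim: the same dichotomy on whether $A$ equals the outer witnessing ball, T2) invoked in the degenerate case, and \e{y22} combined with the ratio bound $\mu(B_2)/\mu(A)\le\gamma$ in the other. The only difference is cosmetic notation ($B_1,B_2$ in place of $A_1,A_2$).
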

whenever $f\in L^r(X,\ZU)$.
\begin{proof}
Let $A$ be a $\gamma$-ball that is $A_1\subset A\subset A_2$, $\mu(A_2)\le \gamma \mu(A_1)$ for some balls $A_1,A_2$.  If $A=A_2$, then by T2) we will have \e{y62} with a constant $\ZL_2$. If $A\neq A_2$, then, applying \e{y22} we obtain
	\begin{equation}
\Delta(A,A_2)\lesssim\left(\frac{\mu(A_2)}{\mu(A)}\right)^{\rho}\left(\ZL_0+ \gamma^\rho\ZL_1\right)\le \gamma^{\rho}\ZL_0+\gamma^{2\rho}\ZL_1.
	\end{equation}
Thus we get \e{y62} with $B=A_2$.
\end{proof}
\begin{remark}\label{R1}
	Taking into account Lemmas \ref{L14} and \ref{L15}, in the definition of $\BO$ operators we can equivalently consider $\lambda$-balls instead of real balls.
\end{remark}
%%%%%%%%%%%%%%%%
For any a ball $A\in \ZB$, $A\neq X$, denote
\begin{equation*}
	\phi(A)=\inf_{B\in \ZB:\, B\supsetneq A}\mu(B).
\end{equation*}
From \lem {L5} it easily follows that the set of balls $B$ satisfying  $B\supsetneq A$ is nonempty and so the number $\phi(A)$ is precisely determined as $A\neq X$. 
\begin{lemma}\label{L16}
	Let $T$ is a $\BO$ operator and $A$ be a ball. Then for any ball $B\supsetneq A$ such that $\mu(B)\le 2\phi(A)$ we have
	\begin{equation}
		\Delta(A,B)\lesssim \ZL_0(T)+\ZL_1(T)+\ZL_2(T).
	\end{equation}
\end{lemma}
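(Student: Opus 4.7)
The plan is to reduce the estimate to a direct combination of \lem{L17} and \lem{L19}, using T2) as the source of one reference ball with controlled $\Delta$.

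First I would apply T2) to $A$ (implicitly using $A^{*}\neq X$) to obtain a ball $B_0\supsetneq A$ with $\Delta(A,B_0)\le \ZL_2(T)$. By the definition of $\phi(A)$ one has $\mu(B_0)\ge \phi(A)$, so the hypothesis $\mu(B)\le 2\phi(A)$ gives $\mu(B)\le 2\mu(B_0)$. Combined with $B\cap B_0\supseteq A\neq\varnothing$, the two balls relation (a consequence of B4), as noted at the beginning of \sect{S2}, yields $B\subset B_0^{*}\subset [B_0]$.

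Next I would set $C=[B_0]$, which is a ball satisfying $A\subset B_0\subset C$, $A\subset B\subset C$ and $\mu(C)\le \ZK \mu(B_0)$. Applying \lem{L17} to the chain $A\subset B_0\subset C$ of genuine balls (so $\gamma=1$), I obtain
\begin{equation*}
\Delta(A,C)\lesssim \left(\frac{\mu(C)}{\mu(B_0)}\right)^{\rho}\bigl(\ZL_0(T)+\ZL_1(T)+\Delta(A,B_0)\bigr)\lesssim \ZL_0(T)+\ZL_1(T)+\ZL_2(T).
\end{equation*}
Finally, since $A\subset B\subset C$, \lem{L19} yields $\Delta(A,B)\le \Delta(A,C)$, which completes the proof.

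The main conceptual ingredient is recognizing how the hypothesis $\mu(B)\le 2\phi(A)$ should be used: it is exactly what is needed to fit $B$ inside the hull ball $[B_0]$ of the T2)-ball $B_0$ via the two balls relation, creating a common ball $C$ that contains both $B$ and $B_0$ with measure only a $\ZK$-factor larger than $\mu(B_0)$. Once this geometric setup is identified, everything follows mechanically from the already-established Lemmas \ref{L17} and \ref{L19}, with no further obstacles.
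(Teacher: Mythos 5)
Your proof is correct and uses the same basic ingredients as the paper's argument (T2) to produce a reference ball with controlled $\Delta$, the two-balls relation from B4), and Lemmas \ref{L17} and \ref{L19}), but it is organized more cleanly. The paper splits into two cases according to whether the T2) ball $C$ has measure larger or smaller than $\mu(B)$, applying L17 to the chain $A\subset C\subset C^{*}$ in the first case and to $A\subset C\subset B^{*}$ in the second (with $\mu(C)\ge\phi(A)\ge\mu(B)/2$ used to control the ratio in the latter case). You observe that the single inequality $\mu(B)\le 2\phi(A)\le 2\mu(B_0)$ always places $B$ inside $B_0^{*}\subset[B_0]$ via the two-balls relation, so a single chain $A\subset B_0\subset [B_0]$ suffices, making the case split unnecessary. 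Both proofs are essentially the same length and depth; your unified version is arguably the slicker presentation.
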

\begin{proof}
	Let balls $B\supsetneq A$ satisfy the conditions of lemma.  Using T2)-condition, we find a ball $C\supsetneq A$ such that
	\begin{equation}\label{t6}
	\Delta(A,C)\le\ZL_2(T).
	\end{equation}
If $\mu(C)>\mu(B)$, then we have $B\subset C^{*}$. Thus, applying Lemmas \ref {L19} and \ref{L17}, we obtain 
\begin{align}
	\Delta(A,B)\le \Delta(A,C^*)&\lesssim \ZL_0(T)+\ZL_1(T)+\Delta(A,C)\\
	&\le \ZL_0(T)+\ZL_1(T)+\ZL_2(T).\label{t8}
\end{align}
	In the case $\mu(C)\le \mu(B)$ we have $C\subset B^{*}$, and since $C\supsetneq A$, we obtain 
	\begin{equation*}
		\mu(B)\gtrsim \mu(B^{*})\ge \mu(C)\ge \phi(A) \ge \frac{\mu(B)}{2}.
	\end{equation*} 
	Thus, once again applying Lemmas \ref {L19}, \ref{L17} and \e {t6}, we conclude 
	\begin{align}
			\Delta(A,B)&\le 	\Delta(A,B^*)\lesssim \ZL_0(T)+\ZL_1(T)+	\Delta(A,C)\\
			&\le \ZL_0(T)+\ZL_1(T)+\ZL_2(T).
	\end{align}
This completes the proof of lemma.
\end{proof}

%%%%%%%%%%%%%%%%%%%
\begin{lemma}\label{L18}
	Let $T$ be a $\BO$ operator. For any balls $A$ and $B$ such that $A\cap B\neq \varnothing$ and $\mu(A)<\mu(B)$ there exists a ball $\tilde A$ such that $A\subsetneq \tilde A$, $\tilde A^*\subset B^*$ and 
	\begin{equation}\label{y23}
		\Delta(A,\tilde A)\lesssim \ZL_0+\ZL_1+\ZL_2
	\end{equation}
	One can also write $A^*$ and $\tilde A^*$ in \e{y23} instead of $A$ and $\tilde A$ respectively.
\end{lemma}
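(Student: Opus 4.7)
My strategy is case analysis based on the quantity $\phi(A):=\inf_{C\in\ZB,\,C\supsetneq A}\mu(C)$ compared with $\mu(B)$. The idea is that in the regime where $\phi(A)$ is small one can choose $\tilde A$ via \lem{L16} and control $\tilde A^*\subset B^*$ through the hull-ball $[\tilde A]$, while in the regime where $\phi(A)$ is large the measures $\mu(A)$ and $\mu(B)$ are forced to be comparable and \lem{L17} absorbs the size ratio.

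\emph{Case 1: $\phi(A)\le\mu(B)/\ZK$.} I would choose a ball $\tilde A\supsetneq A$ with $\mu(\tilde A)\le 2\phi(A)\le 2\mu(B)/\ZK$, which exists by definition of $\phi(A)$. Then
\begin{equation*}
\mu([\tilde A])\le\ZK\mu(\tilde A)\le 2\mu(B),\qquad [\tilde A]\cap B\supset A\cap B\neq\varnothing,
\end{equation*}
so the two-balls relation forces $[\tilde A]\subset B^*$, and hence $\tilde A^*\subset[\tilde A]\subset B^*$. The bound $\Delta(A,\tilde A)\lesssim\ZL_0+\ZL_1+\ZL_2$ then follows immediately from \lem{L16}.

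\emph{Case 2: $\phi(A)>\mu(B)/\ZK$.} Here $\ZK\mu(A)\ge\phi(A)>\mu(B)/\ZK$, so $\mu(A)>\mu(B)/\ZK^2$ and the ratio $\mu(B)/\mu(A)$ is admissible. I would apply T2 to $A$ to obtain a ball $\tilde A\supsetneq A$ with $\Delta(A,\tilde A)\le\ZL_2$, while choosing $\mu(\tilde A)$ close to $\phi(A)$ so that $\tilde A^*\subset B^*$ follows from the fact that $A\subset B^*$ and the excess $\mu(\tilde A)-\mu(A)$ is small. Should direct verification fail, the size comparability $\mu(\tilde A)/\mu(A)\lesssim 1$ lets me invoke \lem{L17} with $A\subset A\subset\tilde A$ (so $\Delta(A,A)=0$ and $\gamma=1$), giving
\begin{equation*}
\Delta(A,\tilde A)\lesssim (\mu(\tilde A)/\mu(A))^{\rho}(\ZL_0+\ZL_1)\lesssim\ZL_0+\ZL_1,
\end{equation*}
which is admissible.

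The main obstacle is Case 2: producing a ball $\tilde A\supsetneq A$ simultaneously satisfying $\tilde A^*\subset B^*$ when $\phi(A)$ is forced to be of order $\mu(B)$. This is the delicate step, and it is handled by combining the T2-ball with the flexibility granted by \lem{L16} and \lem{L17} together with the inclusion $A\subset B^*$ coming from the two-balls relation. The final sentence of the lemma---that \e{y23} also holds with $A^*$ and $\tilde A^*$ in place of $A$ and $\tilde A$---is obtained by repeating the argument on the $\ZK$-balls $A^*\subset\tilde A^*$ and invoking \lem{L17} with $\gamma=\ZK$, starting from the $\Delta(A,\tilde A)$ bound just established.
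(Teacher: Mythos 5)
Your Case 1 is correct: choosing $\tilde A\supsetneq A$ with $\mu(\tilde A)\le 2\phi(A)\le 2\mu(B)/\ZK$, the two-balls relation indeed gives $[\tilde A]\subset B^*$ and hence $\tilde A^*\subset B^*$, and \lem{L16} provides the required bound on $\Delta(A,\tilde A)$.

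Case 2 has a genuine gap, however. You write ``Here $\ZK\mu(A)\ge\phi(A)$,'' but this inequality is false for general (non-doubling) ball bases. The hull-ball $[A]$ satisfies $\mu([A])\le\ZK\mu(A)$, yet it may coincide with $A$ itself (namely whenever $A^*=A$), and then the nearest strictly larger ball can have enormous measure; this is exactly the situation for a martingale filtration in which the parent of $A$ is much bigger than $A$. Without $\phi(A)\lesssim\mu(A)$ you have neither the conclusion $\mu(A)\gtrsim\mu(B)$ nor the size comparability $\mu(\tilde A)/\mu(A)\lesssim 1$ that your invocation of \lem{L17} depends on, and you never actually verify the inclusion $\tilde A^*\subset B^*$ in this regime --- ``the excess $\mu(\tilde A)-\mu(A)$ is small'' is not a substitute for the two-balls relation. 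So as written, Case 2 does not close.

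The paper's argument avoids $\phi(A)$ entirely in the large regime. It first applies T2 to produce a ball $G\supsetneq A$ with $\Delta(A,G)\le\ZL_2$, and then splits on $\mu([G])\le\mu(B)$ versus $\mu([G])>\mu(B)$. In the small subcase it takes $\tilde A=G$ and uses $[G]\subset B^*$. In the large subcase it observes $B\subset[G]^*$, applies \lem{L17} along $A\subset G\subset[G]^*$ (using that $[G]^*$ is a $\ZK$-ball and $\mu([G]^*)/\mu(G)\lesssim 1$) to get $\Delta(A,[G]^*)\lesssim\ZL_0+\ZL_1+\ZL_2$, and then transfers this to $\Delta(A,B)$ via \lem{L19}, finally taking $\tilde A=B$. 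The point is that the measure ratio is controlled between $G$ and $[G]^*$, not between $A$ and $\tilde A$ --- that is what makes the argument work without any doubling assumption. Your strategy would need to be revised along these lines to handle the non-doubling case.
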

\begin{proof}
	Let a pair $(G,A)$ of balls $G\supsetneq A$ satisfy T2) condition, that is $\Delta(A,G)\le \ZL_2$. If for a hull-ball $[G]$ we have $\mu([G])\le \mu(B)$, then $G^*\subset [G]\subset B^*$ and one can choose $\tilde A=G$ in \e{y23}. In the case of $\mu([G])>\mu(B)$ we have $B\subset [G]^{*}$. By \lem{L17} the pair $(A, [G]^{*})$ also satisfies T2)-condition, and according to \lem{L19}, so we have for $(A,B)$. Hence we can choose $\tilde A=B$. One can check that for both cases we will have \e{y23}. The second part of the lemma may be proved similarly.
\end{proof}
\subsection{Truncated $\BO$ operators}
	For a $\BO$ operator \e{y58} we consider the truncated operator
	\begin{equation*}
		T^*f(x)=\sup_{B\in \ZB:\, x\in  B}\|T(f\cdot \ZI_{X\setminus B^*})(x)\|_\ZV.
	\end{equation*}	
\begin{theorem}\label{T5-1}
	If a sublinear operator $T$ satisfies T0) and T1)-conditions, then 
	\begin{equation}\label{y14}
		\mu\{x\in X:\, T^*f(x)>\lambda,\, \MM f(x)< \delta \lambda\}\lesssim \mu\{x\in X:\, \|Tf(x)\|_\ZV>\lambda/2\}
	\end{equation}
	with a constant $\delta\sim (\ZL_1(T)+ \ZL_0(T))^{-1}$.
\end{theorem}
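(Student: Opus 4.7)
The plan is to run a good-$\lambda$ argument with
$$E_\lambda = \{x \in X:\, T^*f(x) > \lambda,\ \MM f(x) < \delta\lambda\},\qquad F_\lambda = \{x \in X:\, \|Tf(x)\|_\ZV > \lambda/2\},$$
and to show $\mu(E_\lambda) \lesssim \mu(F_\lambda)$ by covering $E_\lambda$ with balls on each of which $F_\lambda$ occupies a definite fraction. The smallness parameter $\delta\sim (\ZL_0+\ZL_1)^{-1}$ will emerge from balancing the oscillation bound in T1) against the weak-type bound in T0).

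For (almost every) $x\in E_\lambda$, the definition of $T^*$ produces a ball $B(x)\ni x$ with $\|T(f\cdot\ZI_{X\setminus B(x)^*})(x)\|_\ZV > \lambda$. Since $x\in B(x)$ we have $\langle f\rangle_{B(x)}^* \le \MM f(x) < \delta\lambda$, so T1) gives
$$\OSC_{B(x)}\,T(f\cdot\ZI_{X\setminus B(x)^*}) \le \ZL_1 \langle f\rangle_{B(x)}^* < \ZL_1\delta\lambda.$$
Spreading the pointwise lower bound at $x$ through the essential oscillation then yields
$$\|T(f\cdot\ZI_{X\setminus B(x)^*})(y)\|_\ZV > \lambda(1-\ZL_1\delta)\quad\text{for a.e. } y\in B(x).$$

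Next the localized term $\|T(f\cdot\ZI_{B(x)^*})\|_\ZV$ has to be controlled on $B(x)$. Since $B(x)^*$ is a $\ZK$-ball, a direct rescaling of the weak-type estimate (in the spirit of \lem{L14}) shows that T0) holds for $\gamma$-balls with an unchanged admissible constant; applied to $B(x)^*$ with the bounds $\langle f\rangle_{B(x)^*}\le\langle f\rangle_{B(x)}^* < \delta\lambda$ and $\mu(B(x)^*)\le\ZK\mu(B(x))$, this gives
$$\mu\bigl\{y\in B(x):\, \|T(f\cdot\ZI_{B(x)^*})(y)\|_\ZV > \lambda/4\bigr\} \le (4\ZL_0\delta)^{1/\rho}\ZK\mu(B(x)) \le \tfrac14\mu(B(x))$$
for $\delta$ admissibly small in terms of $\ZL_0$. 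Combining this with the preceding display and with the sublinearity bound $\|Tf(y)\|_\ZV \ge \|T(f\cdot\ZI_{X\setminus B(x)^*})(y)\|_\ZV - \|T(f\cdot\ZI_{B(x)^*})(y)\|_\ZV$, and taking $\delta<1/(4\ZL_1)$, I get $\|Tf(y)\|_\ZV > \lambda/2$ on a subset of $B(x)$ of measure at least $\tfrac34\mu(B(x))$. Choosing $\delta\sim(\ZL_0+\ZL_1)^{-1}$ meets both smallness requirements simultaneously.

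To finish, I would extract from $\{B(x)\}_{x\in E_\lambda}$ a pairwise disjoint subfamily $\{B_k\}$ with $E_\lambda\subset\bigcup_k B_k^*$ (up to a null set), by applying \lem{L1-1} to the bounded pieces $E_\lambda\cap G_n$ of the exhausting sequence $\{G_n\}$ from \lem{L5}, and then sum
$$\mu(E_\lambda) \le \ZK\sum_k\mu(B_k) \le \tfrac{4\ZK}{3}\sum_k\mu(B_k\cap F_\lambda) \le \tfrac{4\ZK}{3}\mu(F_\lambda).$$
The main technical obstacle is the localized bound, since $B(x)^*$ is not in general a ball of $\ZB$; if one prefers to avoid the $\gamma$-ball extension of T0), one can instead replace $B(x)^*$ by the hull-ball $[B(x)]\in\ZB$ and control the leftover $f\cdot\ZI_{[B(x)]\setminus B(x)^*}$, which is supported outside $B(x)^*$, by combining T1) with a second application of T0) on $[B(x)]$.
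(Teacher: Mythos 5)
Your proof is correct and takes essentially the same route as the paper: both run a local good-$\lambda$ argument, extract a Vitali subcover from the defining balls $B(x)$ via Lemma~\ref{L1-1} and Lemma~\ref{L5}, use T1) to propagate the pointwise lower bound $\|T(f\cdot\ZI_{X\setminus B(x)^*})(x)\|_\ZV>\lambda$ across $B(x)$, and use T0) (applied through the hull-ball $[B(x)]$, since $B(x)^*\notin\ZB$ in general) to show the localized piece $\|T(f\cdot\ZI_{B(x)^*})\|_\ZV$ is small on a definite fraction of $B(x)$. Your parametrization (cutoff $\lambda/4$ plus fraction $3/4$) is a minor cosmetic variant of the paper's (cutoff $\beta\langle f\rangle^*_{B_k}$ plus fraction $1/2$), and your closing remark about passing through $[B(x)]$ is exactly the step the paper performs implicitly when it bounds $(\int_{B_k^*}|f|^r)^{\varrho/\rho}$ by $(\langle f\rangle^*_{B_k})^{1/\rho}\mu(B_k^*)$.
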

\begin{proof}
	Denote by $E=E_\lambda$ the set on the left of \e{y14}, which can also be non-measurable. By the definition of $T^*$ for any $x\in E$ there is a ball $B(x)\in\ZB$ such that 
	\begin{equation}\label{h27}
		x\in B(x),\quad \|T(f\cdot \ZI_{X\setminus B^{*}(x)})(x)\|_\ZV>\lambda.
	\end{equation}
	One can check that $E\subset \cup_{x\in E}B(x)$ and
	\begin{equation}\label{y15}
		\langle f\rangle_{B(x)}^*\le \MM f(x)<\delta \lambda,\quad x\in E.
	\end{equation}
	Given ball $G$ apply \lem {L1-1}, we find a sequence $x_k\in E$ such that the balls $\{B_k=B(x_k)\}$ are pairwise disjoint and 
	\begin{equation}\label{h30}
		E\cap G \subset \cup_kB^{*}_k=Q(G).
	\end{equation}
	Since $T$ satisfies T1)-condition, by \e{y15} we obtain 
	\begin{equation*}
		\|T(f\cdot \ZI_{X\setminus B^{*}_k})(x_k)-T(f\cdot \ZI_{X\setminus B^{*}_k})(x)\|_\ZV\le \ZL_1\cdot \langle f \rangle^*_{B_k}\le \ZL_1\delta\lambda,\quad x\in B_k. 
	\end{equation*}
	Thus, we conclude from \e {h27} that
	\begin{align}
		\|T(f\cdot \ZI_{X\setminus B^{*}_k})(x)\|_\ZV&\ge \|T(f\cdot \ZI_{X\setminus B^{*}_k})(x_k)\|_\ZV\label{h29}\\
		&\qquad -\|T(f\cdot \ZI_{X\setminus B^{*}_k})(x_k)-T(f\cdot \ZI_{X\setminus B^{*}_k})(x)\|_\ZV \\
		&\ge \lambda(1-\ZL_1\delta ),\quad x\in B_k.
	\end{align}
	For a $\beta>0$ we define measurable sets
	\begin{equation}\label{h28}
		\tilde B_k= \{x\in B_k:\, \|T(f\cdot \ZI_{B_k^{*}})(x)\|_\ZV\le \beta\cdot \langle f \rangle^*_{B_k,r}\}.
	\end{equation} 
	Using T0)-condition, the measure of the complement of $\tilde B_k$ is estimated by
	\begin{align*}
		\mu(\tilde B_k^c)&\le \frac{ ( \ZL_0)^{1/\rho}}{(\beta \cdot \langle f \rangle^*_{B_k,r})^{1/\rho}}\cdot \left(\int_{B_k^{*}}|f|^r\right)^{ \varrho/\rho}\le \left(\frac{ \ZL_0}{\beta}\right)^{1/\rho}\mu(B_k^{*}) \\
		&\lesssim \left(\frac{ \ZL_0}{\beta}\right)^{1/\rho}\mu(B_k).
	\end{align*}
	Using also \e{y15}, for an appropriate constant $\beta\sim  \ZL_0$ we have
	\begin{align}
	&	\|T(f\cdot \ZI_{B_k^{*}})(x)\|_\ZV\le \beta \delta\lambda,\quad x\in \tilde B_k,\label{y21}\\
	&	\mu(\tilde B_k)\ge\mu(B_k)-\mu(\tilde B_k^c) \ge \mu(B_k)/2.\label{z43}
	\end{align}
	If $x\in \tilde B_k$, then, using sublinearity of $T$ together with relations \e {h29} and \e {y21}, we obtain
	\begin{align*}
		\|Tf(x)\|_\ZV&\gtrsim \|T(f\cdot \ZI_{X\setminus B_k^{*}})(x)\|_\ZV-\|T(f\cdot \ZI_{B_k^{*}})(x)\|_\ZV\\
		&\ge \lambda(1-\ZL_1\delta-\beta\delta)\ge \lambda/2,   	
	\end{align*} 
	where the last inequality can be satisfied for $\delta= 1/2(\ZL_1+\beta)\sim (\ZL_1+ \ZL_0)^{-1}$.
	Hence we conclude $\cup_k\tilde B_k\subset \{\|Tf(x)\|_\ZV>\lambda/2\}$
	and taking into account also \e {h30} and \e {z43}, we get
	\begin{align*}
		\mu(E\cap G)\le \mu(Q(G))&\le \sum_k\mu(B_k^{*})\le \ZK\cdot \sum_k\mu(B_k)\\
		&\le 2\ZK\cdot  \sum_k\mu(\tilde B_k)\le 2\ZK\mu \{\|Tf(x)\|_\ZV>\lambda/2\}.
	\end{align*}
	Since $G$ is arbitrary, applying \lem{L5} we complete the proof. 
\end{proof}
\begin{theorem}\label{T6}
	If $T$ is a $\BO$ operator, then $T^*$ is also a $\BO$ operator. Moreover, we have
	\begin{align}
		&\ZL_0(T^*)\lesssim \ZL_0(T)+ \ZL_1(T),\label{y59}\\
		&\ZL_1(T^*)\lesssim \ZL_0(T)+ \ZL_1(T),\label{y78}\\
		&\ZL_2(T^*)=\ZL_2(T),		
	\end{align}
\end{theorem}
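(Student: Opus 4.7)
My plan is to verify in turn T0), T1), T2) for $T^*$ using the groundwork of Section \ref{S4}. For T0), I apply \trm{T5-1} to $f\cdot\ZI_B$ at threshold $\lambda\langle f\rangle_B$, with $\delta\sim(\ZL_0(T)+\ZL_1(T))^{-1}$: the resulting bound splits as (i) the level set of $T(f\ZI_B)$ above $\lambda\langle f\rangle_B/2$, controlled by T0) for $T$, and (ii) the level set of $\MM(f\ZI_B)$ above $\delta\lambda\langle f\rangle_B$, controlled by \trm{T1-1}. After substituting $\left(\int_B\|f\|_\ZU^r\right)^{\varrho/\rho}=\mu(B)\langle f\rangle_B^{1/\rho}$, both contributions become $(C/\lambda)^{1/\rho}\mu(B)$, yielding $\ZL_0(T^*)\lesssim\ZL_0(T)+\ZL_1(T)$. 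For T2), I reuse the ball pair $B\supsetneq A$ from T2) for $T$: applied to $h=f\cdot\ZI_{X\setminus C^*}$ for each ball $C\in\ZB$, T2) gives $\|T(h\ZI_{B^*\setminus A^*})(x)\|_\ZV\le\ZL_2(T)\langle f\rangle_{B^*}$ for a.e.\ $x\in A$. Taking the supremum over $C\ni x$ — with the separability of $\ZB$ handling the measurability of $T^*$ — yields $\SUP_A(T^*(f\ZI_{B^*\setminus A^*}))\le\ZL_2(T)\langle f\rangle_{B^*}$, so $\ZL_2(T^*)=\ZL_2(T)$.

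The main work is T1). Set $g=f\cdot\ZI_{X\setminus B^*}$; I will show that for a.e.\ $x,x'\in B$ and every ball $A\ni x$,
\begin{equation*}
\|T(g\cdot\ZI_{X\setminus A^*})(x)\|_\ZV\le T^*(g)(x')+C\bigl(\ZL_0(T)+\ZL_1(T)\bigr)\langle f\rangle_B^*,
\end{equation*}
which, after $\sup_{A\ni x}$ and symmetrising in $x,x'$, gives the required $\OSC_B(T^*(g))\lesssim(\ZL_0(T)+\ZL_1(T))\langle f\rangle_B^*$. I split on the size of $A$. If $\mu(A)\le\mu(B)$, the two balls relation gives $A\subset B^*$ and $A^*\subset B^{**}$, so $g\cdot\ZI_{X\setminus A^*}=g-f\cdot\ZI_{A^*\setminus B^*}$ with the second summand supported in $B^{**}\setminus B^*$. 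I bound this second summand by $\Delta(B,B^*)\langle f\rangle_{B^{**}}$, where $\Delta(B,B^*)\lesssim\ZL_0(T)+\ZL_1(T)$ follows from \lem{L17} applied to the chain $B\subset B\subset B^*$ together with the trivial identity $\Delta(B,B)=0$. The first summand $\|T(g)(x)\|_\ZV$ is within $\ZL_1(T)\langle f\rangle_B^*$ of $\|T(g)(x')\|_\ZV\le T^*(g)(x')$ (taking $B$ itself as the admissible ball at $x'$, for which $g\cdot\ZI_{X\setminus B^*}=g$) by T1) for $T$.

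If $\mu(A)>\mu(B)$, the two balls relation gives instead $B\subset A^*\subset[A]$, so the hull ball $[A]$ contains both $x$ and $x'$ and is therefore admissible at $x'$ for $T^*$. I decompose $g\cdot\ZI_{X\setminus A^*}=g\cdot\ZI_{X\setminus[A]^*}+g\cdot\ZI_{[A]^*\setminus A^*}$. The first summand is handled by T1) for $T$ at the ball $[A]$: its oscillation across $B\subset[A]$ is at most $\ZL_1(T)\langle g\rangle_{[A]}^*\le\ZL_1(T)\langle f\rangle_B^*$, and its value at $x'$ is bounded by $T^*(g)(x')$. The annular summand is bounded by $\Delta(A,[A])\langle g\rangle_{[A]^*}$, where \lem{L17} applied to $A\subset A\subset[A]$ together with $\mu([A])\le\ZK\mu(A)$ and $\Delta(A,A)=0$ gives $\Delta(A,[A])\lesssim\ZL_0(T)+\ZL_1(T)$. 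The main subtlety — and the reason the final constant is $\ZL_0(T)+\ZL_1(T)$ rather than $\ZL_0(T)+\ZL_1(T)+\ZL_2(T)$ — is the use of \lem{L17} with a trivial middle set in both branches, exploiting $\Delta(\cdot,\cdot)=0$ to bypass any appeal to T2). The large-ball case also requires the non-obvious pairing $x'\leftrightarrow[A]$ (rather than $x'\leftrightarrow A$, since $x'$ need not lie in $A$ itself); this is what I expect to be the main obstacle to execute cleanly.
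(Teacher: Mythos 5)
Your proof is correct and follows essentially the same route as the paper: T0) via Theorems~\ref{T1-1} and~\ref{T5-1}, T2) by transferring the $\Delta_T(A,B)$ bound to $T^*$, and T1) by comparing the near-optimal ball $A\ni x$ against a controlled enlargement containing $x'$, bounding the resulting annular piece by the $\Delta$-estimate \eqref{y22} (your ``\lem{L17} with trivial middle set''). The only cosmetic difference is that the paper splits on $\mu(A^*)\lessgtr\mu(B)$ and pairs $x'$ with a ball $A'\supset A^{**}$, whereas you split on $\mu(A)\lessgtr\mu(B)$ and pair $x'$ with $[A]$; your small-ball branch then picks up an extra annulus $A^*\setminus B^*$ handled by $\Delta(B,B^*)$, which the paper avoids, but this is a harmless variation in bookkeeping, not a different method.
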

\begin{proof}
Weak type inequality \e{y57} or condition \e{y59} easily follows from Theorems \ref{T1-1} and \ref{T5-1}.	One can also check that for any balls $A,B$ satisfying $A\subset B$ we have 
	\begin{equation*}
		\Delta_{T^*}(A,B)=\Delta_{T}(A,B).
	\end{equation*}
	Thus, if balls $A$ and $B$ satisfy T2)-condition for the operator $T$, then the same conditions hold also for $T^*$ with $\ZL_2(T^*)=\ZL_2(T)$. 
	To prove \e{y78}, let $B\in \ZB$ and $f\in L^r(X,\ZU)$ satisfy 
	\begin{equation}\label{a67}
		\supp f\in X\setminus B^{*}.
	\end{equation}
	Choose arbitrary points $x,x'\in B$ and let estimate $|T^*f(x)- T^*f(x')|$. If $T^*f(x)= T^*f(x')$, then the estimation is trivial. So we can suppose that $T^*f(x)> T^*f(x')$. Using the definition of $T^*f(x)$, we find a ball $A\ni x$ such that
	\begin{equation}\label{h37}
		\frac{T^*f(x)+T^*f(x')}{2}< \|T(f\cdot \ZI_{X\setminus A^{*}})(x)\|_\ZV.
	\end{equation} 
First suppose that $\mu(A^*)>\mu(B)$. So we can say that $x'\in B\subset A^{**}\subset A'$, where  $A'$ is a ball satisfying $\mu(A')\le \ZK^2\mu(A)$. By definition of $T^*$ we can write
\begin{equation}
	T^*f(x')\ge \|T(f\cdot \ZI_{X\setminus A'^{*}})(x')\|_\ZV,
\end{equation}
which together with \e {h37} yields 
\begin{align}
	|T^*f(x)-T^*f(x')|&=T^*f(x)-T^*f(x')\\
	&< 2\|T(f\cdot \ZI_{X\setminus A^{*}})(x)\|_\ZV-2T^*f(x')\\
	&\le 2\bigg(\|T(f\cdot \ZI_{X\setminus A^{*}})(x)\|_\ZV-\|T(f\cdot \ZI_{X\setminus A'^{*}})(x')\|_\ZV\bigg).\label{a6}
\end{align}
Besides, since $x\in A$, from \e{y22} (or \lem {L17}) and \e {a67} it follows that
\begin{align}
	\|T(f\cdot \ZI_{A'^*\setminus A^{*}})(x)\|_\ZV&\le \Delta(A,A') \langle f\cdot \ZI_{A'^*\setminus A^{*}}\rangle_{A'^*}\\
&\lesssim \left(\frac{\mu(A')}{\mu(A)}\right)^{\rho}(\ZL_0(T)+ \ZL_1(T)) \langle f \rangle_{A'^*},\\
	&\lesssim (\ZL_0(T)+ \ZL_1(T)) \langle f \rangle^*_{B},\label{a81}
\end{align}
Thus, applying the T1)-condition for $T$, from \e {a81} and \e {a6} we conclude
\begin{align*}
	|T^*f(x)-T^*f(x')|&\le 2\bigg(\|T(f\cdot \ZI_{X\setminus A'^*})(x)\|_\ZV+\|T(f\cdot \ZI_{A'^{*}\setminus A^{*}})(x)\|_\ZV\\
	&\qquad -\|T(f\cdot \ZI_{X\setminus A'^*})(x')\|_\ZV\bigg)\\
	&\lesssim \|T(f\cdot \ZI_{X\setminus A'^*})(x)-T(f\cdot \ZI_{X\setminus A'^*})(x')\|_\ZV\\
	&\qquad +(\ZL_0(T)+ \ZL_1(T))\langle f \rangle^*_{B} \\
	&\lesssim (\ZL_0(T)+ \ZL_1(T)) \langle f \rangle^*_{B}
\end{align*}
that implies \e{y78}.  
Now suppose $\mu(A^*)\le \mu(B)$ and therefore $A^*\subset B^*$. Then by \e{a67} we get $f\cdot \ZI_{X\setminus A^{*}}=f\cdot \ZI_{X\setminus B^{*}}$. Thus, using the same argument, we can write
\begin{align}
	|T^*f(x)-T^*f(x')|&=T^*f(x)-T^*f(x')\\
	&\le 2\bigg(\|T(f\cdot \ZI_{X\setminus B^{*}})(x)\|_\ZV-\|T(f\cdot \ZI_{X\setminus B^{*}})(x')\|_\ZV\bigg)\\
	&\le  \ZL_1(T)\langle f \rangle^*_{B}
\end{align}
that again gives \e{y78}.
\end{proof}

\begin{theorem}\label{T16}
 If $\{T_\alpha\}$ is a family of $\BO$ operators with uniformly bounded characteristic constants $\ZL_j(T_\alpha)$, then the maximal operator
 \begin{equation}\label{aa4}
 	Tf(x)=\sup_\alpha \|T_\alpha f(x)\|_\ZV
 \end{equation} 
satisfies the bounds
	\begin{align}
		&\ZL_1(T)\lesssim \sup_\alpha \ZL_1(T_\alpha),\label{aa5}\\
		&\ZL_2(T)\lesssim \sup_\alpha(\ZL_0(T_\alpha)+\ZL_1(T_\alpha)+\ZL_2(T_\alpha)).\label{aa6}
	\end{align}
\end{theorem}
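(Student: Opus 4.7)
My plan is to verify that $T$ satisfies the T1) and T2) conditions with the claimed constants by reducing each to a uniform version of the same condition for the individual $T_\alpha$. Two ingredients drive the argument: the elementary inequality $\bigl|\sup_\alpha a_\alpha - \sup_\alpha b_\alpha\bigr| \le \sup_\alpha |a_\alpha - b_\alpha|$, combined with the reverse triangle inequality in $\ZV$, and for T2) \lem{L16}, which decouples the choice of auxiliary ball from the index $\alpha$.

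For \e{aa5}, I would fix a ball $B$ and $f \in L^r(X,\ZU)$, and bound the oscillation of $T(f \cdot \ZI_{X\setminus B^*})$ on $B$ pointwise. For almost every $x,x'\in B$, the elementary sup-inequality combined with the reverse triangle inequality in $\ZV$ yields
\begin{equation*}
\bigl|T(f \cdot \ZI_{X\setminus B^*})(x) - T(f \cdot \ZI_{X\setminus B^*})(x')\bigr| \le \sup_\alpha \|T_\alpha(f \cdot \ZI_{X\setminus B^*})(x) - T_\alpha(f \cdot \ZI_{X\setminus B^*})(x')\|_\ZV.
\end{equation*}
Each term on the right is then controlled by $\OSC_B\bigl(T_\alpha(f \cdot \ZI_{X\setminus B^*})\bigr) \le \ZL_1(T_\alpha) \langle f\rangle^*_B$ from the T1) condition for $T_\alpha$. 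Taking $\esssup$ in $x, x' \in B$ and $\sup$ over $\alpha$ gives \e{aa5}.

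The harder part is \e{aa6}: the T2) condition applied to each $T_\alpha$ only produces some ball $B_\alpha \supsetneq A$, and these balls may genuinely depend on $\alpha$, whereas for the maximal operator $T$ a single $B$ must work simultaneously for all indices. Here \lem{L16} is decisive: it asserts that for \emph{any} ball $B \supsetneq A$ with $\mu(B) \le 2\phi(A)$, the quantity $\Delta_{T_\alpha}(A,B)$ is already bounded by an admissible multiple of $\ZL_0(T_\alpha)+\ZL_1(T_\alpha)+\ZL_2(T_\alpha)$. Accordingly, given $A\in \ZB$ with $A^*\neq X$, I would first select any such $B$ (existence being guaranteed by \lem{L5}, which produces strictly larger balls above $A$), and then apply \lem{L16} uniformly in $\alpha$. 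By the definition \e{y17} of $\Delta_{T_\alpha}(A,B)$, for every $f \in L^r(X,\ZU)$ and every $x \in A$,
\begin{equation*}
\|T_\alpha(f \cdot \ZI_{B^* \setminus A^*})(x)\|_\ZV \le \Delta_{T_\alpha}(A,B)\, \langle f\rangle_{B^*} \lesssim \sup_\alpha\bigl(\ZL_0(T_\alpha)+\ZL_1(T_\alpha)+\ZL_2(T_\alpha)\bigr) \langle f\rangle_{B^*}.
\end{equation*}
Taking supremum over $\alpha$ on the left and then $\SUP_A$ on both sides yields \e{aa6}. Note that the appearance of $\ZL_0+\ZL_1$ on the right, as opposed to only $\ZL_2$ in the original T2) condition, is precisely the price paid for decoupling the choice of the auxiliary ball from the index $\alpha$.
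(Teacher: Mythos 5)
Your proposal is correct and follows essentially the same path as the paper: for \e{aa5} the paper's $\varepsilon$-argument (choose $\alpha_0$ nearly attaining the sup at $x$, compare with the same $\alpha_0$ at $x'$) is exactly the unwound version of your abstract inequality $\lvert\sup_\alpha a_\alpha-\sup_\alpha b_\alpha\rvert\le\sup_\alpha\lvert a_\alpha-b_\alpha\rvert$ combined with the reverse triangle inequality, and for \e{aa6} the paper invokes \lem{L16} in precisely the way you describe, choosing a single ball $B\supsetneq A$ with $\mu(B)\le 2\phi(A)$ independent of $\alpha$ so that $\Delta_{T_\alpha}(A,B)\lesssim\ZL_0(T_\alpha)+\ZL_1(T_\alpha)+\ZL_2(T_\alpha)$ uniformly.
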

\begin{proof}
	Let $A\in\ZB$ be an arbitrary ball and  $\supp f\subset X\setminus A^{*}$.
	Take arbitrary points $x,x'\in A$ and suppose that $Tf(x)\ge Tf(x')$.
	According to the definition of $T$, for any $\delta> 0$ there exists an index $\alpha_0$ such that $Tf(x)\le \|T_{\alpha_0}f(x)\|_\ZV+\delta$.
	On the other hand for the same $\alpha_0$ we have $Tf(x')\ge \|T_{\alpha_0} f(x')\|_\ZV$. Thus, applying T1) property of $T_{\alpha_0}$, we obtain
	\begin{align*}
		|Tf(x)-Tf(x')|&=Tf(x)-Tf(x')\\
		&\le \|T_{\alpha_0} f(x)\|_\ZV+\delta-\|T_{\alpha_0} f(x')\|_\ZV\\
		&\le \|T_{\alpha_0} f(x)-T_{\alpha_0}f(x')\|_\ZV+\delta\\
		&\le \sup_\alpha\ZL_1(T_\alpha)\cdot \langle f \rangle^*_{A}+\delta.
	\end{align*}
	Since $\delta>0$ can be arbitrarily small, we get \e {aa5}. \e{aa6} follows from \lem{L16}, according which we can choose a ball $B$ independent of the operators $T_\alpha$ such that 
	\begin{equation}
			\Delta_{T_\alpha}(A,B)\lesssim \ZL_0(T_\alpha)+\ZL_1(T_\alpha)+\ZL_2(T_\alpha)
	\end{equation}
for any $\alpha$. This implies \e{aa6}, completing the proof of theorem.
\end{proof}
\begin{corollary}
	If $\{T_\alpha\}$ is a family of $\BO$ operators with uniformly bounded characteristic constants $\ZL_j(T_\alpha)$ and the maximal operator $T$ in \e{aa4} satisfies condition T0), then $T$ is a $\BO$ operator.
\end{corollary}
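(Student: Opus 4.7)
The proof is essentially a direct assembly of material already established, so there is little new work to do. The plan is to verify each of the three defining properties of a $\BO$ operator for $T$ in turn, quoting \trm{T16} for the non-trivial bounds.

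First, I would check that $T\colon L^r(X,\ZU)\to L^0(X,\ZR)$ is sublinear. For any $f,g\in L^r(X,\ZU)$ and any index $\alpha$ we have $\|T_\alpha(f+g)\|_\ZV\le \|T_\alpha f\|_\ZV+\|T_\alpha g\|_\ZV$ by the sublinearity of each $T_\alpha$, and taking supremum over $\alpha$ yields
\begin{equation*}
	T(f+g)\le \sup_\alpha\|T_\alpha f\|_\ZV+\sup_\alpha\|T_\alpha g\|_\ZV = Tf+Tg,
\end{equation*}
which is sublinearity in $L^0(X,\ZR)$ (note $\ZV=\ZR$ here, so $\|\cdot\|_\ZV$ coincides with absolute value). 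Property T0) is assumed by hypothesis.

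Next, property T1) follows immediately from \e{aa5} in \trm{T16}: that theorem, under exactly the uniform boundedness assumption we have, produces
\begin{equation*}
	\OSC_B\bigl(T(f\cdot \ZI_{X\setminus B^*})\bigr)\lesssim \sup_\alpha\ZL_1(T_\alpha)\cdot \langle f\rangle^*_B
\end{equation*}
for every ball $B$, which is the T1) bound with an admissible constant $\ZL_1(T)\lesssim \sup_\alpha\ZL_1(T_\alpha)$.

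Property T2) is then obtained from \e{aa6}, whose proof (via \lem{L16}) shows that for every ball $A\in\ZB$ with $A^*\neq X$ one can select a single ball $B\supsetneq A$ independent of $\alpha$ such that $\Delta_{T_\alpha}(A,B)\lesssim \ZL_0(T_\alpha)+\ZL_1(T_\alpha)+\ZL_2(T_\alpha)$ uniformly in $\alpha$. Using the definition \e{y17} of $\Delta$ together with $Tg(x)=\sup_\alpha \|T_\alpha g(x)\|_\ZV$, we get for every $x\in A$ and $f\in L^r(X,\ZU)$
\begin{equation*}
	T(f\cdot \ZI_{B^*\setminus A^*})(x)=\sup_\alpha \|T_\alpha(f\cdot \ZI_{B^*\setminus A^*})(x)\|_\ZV\le \sup_\alpha \Delta_{T_\alpha}(A,B)\cdot \langle f\rangle_{B^*},
\end{equation*}
which after taking $\esssup_{x\in A}$ gives the T2)-bound with $\ZL_2(T)\lesssim \sup_\alpha(\ZL_0(T_\alpha)+\ZL_1(T_\alpha)+\ZL_2(T_\alpha))$. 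Combined with sublinearity and the assumed T0), this shows $T$ is a $\BO$ operator. Since the entire content has already been packaged into \trm{T16} and \lem{L16}, there is no real obstacle; the only point to be mildly careful about is that T2) demands a single ball $B$ valid for the sup-operator, and this uniformity is precisely what \lem{L16} was designed to supply.
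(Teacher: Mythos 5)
Your proof is correct and follows exactly the route the paper intends: the corollary is a direct assembly of \trm{T16} (for the T1) and T2) bounds, the latter relying on \lem{L16} to supply an $\alpha$-independent ball) together with the assumed T0) and the straightforward sublinearity check. The paper leaves this corollary without a written proof precisely because it is this immediate consequence, and your attention to the uniformity of the ball $B$ in T2) is the one genuinely subtle point, correctly identified.
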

\begin{theorem}
	Let the ball-basis $\ZB$ satisfy the doubling condition. If a sublinear operator $T$ satisfies T0) and T1) conditions, then $T$ satisfies also T2) and so it is $\BO$ operator. Moreover, 
	\begin{equation}\label{y63}
		\ZL_2(T)\lesssim\ZL_0(T)+ \ZL_1(T).
	\end{equation}
\end{theorem}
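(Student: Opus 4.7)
The plan is short. Condition T2) asks for a ball $B \supsetneq A$ for which the $\SUP_A$-norm of $T(f \cdot \ZI_{B^* \setminus A^*})$ is dominated by $\ZL_2 \langle f \rangle_{B^*}$, and since T2) is vacuous when $A^* = X$, I may assume $A^* \neq X$. The doubling hypothesis will supply a $B$ whose measure ratio $\mu(B)/\mu(A)$ is bounded by an admissible constant, after which the estimate follows from machinery already developed in this section.

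First I would invoke \lem{L2}: because $\ZB$ is doubling and $A^* \neq X$, there is a ball $B \supsetneq A$ with $2\mu(A) \le \mu(B) \le \eta\ZK\,\mu(A)$. In particular, $\mu(B)/\mu(A)$ is admissibly controlled.

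Next I would apply the particular case \e{y22} of \lem{L17}. Every ball is trivially a $1$-ball (take $B_1 = B_2 = A$ in the definition of $\gamma$-ball), so I may use $\gamma = 1$, obtaining
\[
\Delta(A,B) \;\lesssim\; \left(\frac{\mu(B)}{\mu(A)}\right)^{\rho}\bigl(\ZL_0(T) + \ZL_1(T)\bigr) \;\lesssim\; \ZL_0(T) + \ZL_1(T).
\]
Unpacking the definition \e{y17} of $\Delta(A,B)$, this is the pointwise bound
\[
\|T(f\cdot \ZI_{B^*\setminus A^*})(x)\|_\ZV \;\lesssim\; \bigl(\ZL_0(T)+\ZL_1(T)\bigr)\langle f\rangle_{B^*}, \qquad x\in A,
\]
which is precisely T2) for $T$ at the ball $A$ with the chosen $B$, together with the sought constant estimate \e{y63}. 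There is essentially no obstacle: the difficulty was absorbed into \lem{L17}, while doubling contributes only the bounded-ratio enlargement provided by \lem{L2}.
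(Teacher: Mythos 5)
Your proof is correct and follows essentially the same path as the paper's: both reduce T2) to the particular case \e{y22} of \lem{L17}, after using the doubling hypothesis to produce a strictly larger ball $B\supsetneq A$ whose measure ratio $\mu(B)/\mu(A)$ is an admissible constant. The only cosmetic difference is that you route the ball production through \lem{L2}, whereas the paper invokes the doubling condition \e{h73} directly; both yield the same admissible bound $\ZL_2(T)\lesssim\ZL_0(T)+\ZL_1(T)$.
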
 
\begin{proof}
	We need to check T2)-condition. Let $A$ be an arbitrary ball and $B$ satisfy conditions \e {h73}. Then applying \e{y22}, we get the bound
	\begin{equation*}
		\Delta(A,B)\lesssim \left(\frac{\mu( B)}{\mu(A)}\right)^{\rho}(\ZL_0+\ZL_1)\le \eta^{\rho} (\ZL_0+\ZL_1),
	\end{equation*}
which implies \e{y63}.
\end{proof}
\section{Proof of \trm{T7}}\label{S5}
\begin{proof}
 Consider the operator 
	\begin{align*}
		\Gamma f(x)=\max\left\{\|Tf(x)\|_\ZV,T^{*}f(x),\ZL\cdot \MM f(x)\right\},
	\end{align*}
where $\ZL=\ZL_0(T)+\ZL_1(T)+\ZL_2(T)$. As $T$ is a $\BO$ operator, from Theorems \ref {T1-1} and \ref {T6} it follows that the operator $\Gamma$ satisfies the weak type property and $\ZL_0(\Gamma)\lesssim \ZL$. Thus for every ball $A\in \ZB$ the set
\begin{equation}\label{y75}
	F_A=\left\{x\in X:\, \Gamma (f\cdot \ZI_{A^{**}})(x)> \ZL \lambda \langle f\rangle_{A^{**}}\right\},\quad \lambda>0,
\end{equation}
has a measure 
\begin{equation}\label{y69}
	\mu(F_A)\lesssim \mu(A)/\lambda^{1/\rho}=\alpha\mu(A).
\end{equation}
We consider the sets $F_A$ for a fixed function $f\in L^r(X,\ZU)$ with $\supp f\subset B\in\ZB$. Applying \pro{P11} with an admissible constant $\lambda >0$ and $A_0=B$, we find a sparse collection of balls $\ZS$ satisfying the conditions of \pro{P11}. Then, applying \lem{L18},  for each $A\in \ZS$ and $G\in \ch(A)$ we may fix a ball $\tilde G$ such that $G\subsetneq \tilde G$, $\tilde G^*\subset A^*$ and $\Delta(G,\tilde G)\lesssim \ZL$. From \lem{L17} we get $\Delta(G,\tilde G^*)\lesssim \ZL$ and therefore for any $x\in G$ we can write
\begin{align}
	\|T(f\cdot \ZI_{\tilde G^{**}\setminus G^{**}})(x)\|_\ZV&=\|T(f\cdot \ZI_{\tilde G^{**}\setminus G^{**}}\cdot\ZI_{\tilde G^{**}\setminus G^{*}} )(x)\|_\ZV\\
	&\le \Delta(G,\tilde G^*)\langle f\cdot \ZI_{\tilde G^{**}\setminus G^{**}}\rangle_{\tilde G^{**}}\\
	&\le \ZL \langle f\rangle_{\tilde G^{**}}, \quad x\in G.\label{y76}
\end{align}
Having a1) from \pro{P11}, we can apply \pro{L13} to the tree-collection  $\ZS$ with $\{E_A=A\setminus F_A\}$. Then we get a martingale-collection $\{\bar A:\, A\in \ZS\}\Subset \ZS$ such that $A\setminus F_A$ are pairwise disjoint and \e{y67}-\e{y64} hold. Moreover, \e{y64} may be written in the form
\begin{equation}
	A_0\subset_{a.e.}\bigcup_{A\in \ZS}( A\setminus F_A)=\bigcup_{A\in \ZS}(\bar A\setminus F_A),
\end{equation}
where the first inclusion follows from condition a2) of \pro{P11}. Thus for a.e. $x\in A_0$ one can find a sequence of balls $A_k$, $k=1,2,\ldots,n$ such that $x\in \bar A_n\setminus  F_{A_n}$ and $A_{j+1}\in \ch(A_j)$, $j=0,1,\ldots,n-1$. Thus we obtain
\begin{align}
	&	x\in \bar A_n\setminus  F_{A_n}\subset \bar A_{j},\quad j=0,1,\ldots,n-1,\label{y77}\\
	&A_{j+1}^{*}\subset \tilde A_{j+1}^{*}\subset A_{j}^{*},\quad j=0,1,\ldots,n-1.
\end{align}
By a4) we have $\tilde G\setminus F_A\neq \varnothing $ whenever $G\in \ch(A)$. So we can fix  points $\xi_j\in \tilde A_{j+1}\setminus F_{A_{j}},\quad j=0,1,\ldots,n-1$. 
Applying T1) and T2) properties (see also \rem{R1}) and the definition of $F_A$ in \e{y75}, we get inequalities
\begin{align}
&\left\|T(f\cdot \ZI_{A_j^{**}\setminus \tilde A_{j+1}^{**}})(x)-T(f\cdot \ZI_{A_j^{**}\setminus \tilde A_{j+1}^{**}})(\xi_j)\right\|_\ZV\\
&\qquad\le \ZL_1\langle f\cdot \ZI_{A_j^{**}\setminus \tilde A_{j+1}^{**}}\rangle_{{\tilde A_{j+1}^{*}}}^*\le \ZL \MM(f\cdot \ZI_{A_{j}^{**}})(\xi_j)\\
&\qquad \le \Gamma(f\cdot \ZI_{A_{j}^{**}})(\xi_j)\lesssim \ZL  \langle f\rangle_{A_j^{**}},\label{a75}
\end{align}
\begin{align}
	\left\|T(f\cdot \ZI_{A_j^{**}\setminus \tilde A_{j+1}^{**}})(\xi_j)\right\|_\ZV\le T^{*}(f\cdot \ZI_{A_j^{**}})(\xi_j)\le \ZL  \langle f\rangle_{A_{j}^{**}},\label{a105}
\end{align}
\begin{align}
	\left\|T(f\cdot \ZI_{\tilde A_{j+1}^{**}\setminus A_{j+1}^{**}})(x)\right\|_\ZV\le \ZL \langle f\rangle_{\tilde A_{j+1}^{**}}\le \ZL \MM(f\cdot \ZI_{A_{j}^{**}})(\xi_j)\lesssim \ZL  \langle f\rangle_{A_j^{**}},\label{a76}
\end{align}
where in \e{a76} we also use \e{y76}. Thus, applying  \e {a75}, \e {a105} and  \e {a76}, we conclude	
\begin{align}
	\left\|T(f\cdot \ZI_{A_j^{**}})(x)\right\|_\ZV	&\le\left\|T(f\cdot \ZI_{A_j^{**}\setminus \tilde A_{j+1}^{**}})(x)\right\|_\ZV+\left\|T(f\cdot \ZI_{\tilde A_{j+1}^{**}})(x)\right\|_\ZV\\
	&\le \left\|T(f\cdot \ZI_{A_j^{**}\setminus \tilde A_{j+1}^{**}})(x)-T(f\cdot \ZI_{A_j^{**}\setminus \tilde A_{j+1}^{**}})(\xi_j)\right\|_\ZV\\
	&\qquad+\left\|T(f\cdot \ZI_{A_j^{**}\setminus \tilde A_{j+1}^{**}})(\xi_j)\right\|_\ZV\\
	&\qquad+\left\|T(f\cdot \ZI_{\tilde A_{j+1}^{**}\setminus A_{j+1}^{**}})(x)\right\|_\ZV+\left\|T(f\cdot \ZI_{A_{j+1}^{**}})(x)\right\|_\ZV\\
	&\le C \ZL\cdot\langle f\rangle_{A_j^{**}}+\left\|T(f\cdot \ZI_{A_{j+1}^{**}})(x)\right\|_\ZV\label{y24}
\end{align}
with an admissible constant $C>0$. Since $x\in A_n\setminus  F_{A_n}$, we can  also write $\left\|T(f\cdot \ZI_{A_{n}^{**}})(x)\right\|_\ZV\le \ZL  \langle f\rangle_{A_n^{**}}$. Finally, using \e{y77} and the iteration of \e{y24}, we conclude
\begin{align}
		\|Tf(x)\|_\ZV=\|T(f\cdot \ZI_{A_0^{**}})(x)\|_\ZV\lesssim \ZL\sum_{j=0}^n \langle f\rangle_{A_j^{**}}\lesssim \ZL\sum_{A\in \ZS} \langle f\rangle_{[[A]]}\ZI_{\bar A}(x).
\end{align}
This gives the required sparse domination \e{y51} with respect to the sparse family of balls $\ZA=\{[[A]]:\, A\in \ZS\}$ and the martingale-collection $\bar \ZA=\{A:\, A\in \ZS\}$. To show \e{y53} we use \e{s27} that holds for the initial family $\ZS$.  Indeed, from \e{s27} and the definition of martingale-collection it follows that
\begin{align}
	&\sum_{G\in \ch^{n}(A_0)}\mu(\bar G)\lesssim \alpha^n \mu(A_0),\\
	&\bigcup_{G\in \ch^{n+1}(A_0)}\bar G\subset \bigcup_{G\in \ch^{n}(A_0)}\bar G
\end{align}
that easily implies \e{y53} choosing a small enough $\alpha$. Condition \e{y52} holds with a ball $B'=[[[A_0]]]$. Applying \trm{T6}, we derive the sparse domination \e{y51} for the truncated operator $T^*$ too. Theorem is proved.
\end{proof}

\section{An abstract version of an inequality of Lerner}\label{S6}
Denote the $\alpha$-oscillation of a measurable function $f\in L^0(X,\ZU)$ on a ball $B\in\ZB$ by
\begin{equation}\label{y100}
\OSC_{B,\alpha}(f)=\inf_{E\subset B:\, \mu(E)> \alpha\mu(B)}\OSC_E(f),\quad 0<\alpha <1 \text{ (see \e{y97})}.
\end{equation}
\begin{definition}\label{D2}
	For $f\in L^0(X,\ZU)$ and a ball $B\in\ZB$ let $M_f(B)\subset B$ be the union of all measurable sets $E\subset B$, satisfying 
	\begin{equation}\label{y99}
		\mu(E)>\mu(B)/2,\quad \OSC_E(f)\le 2\OSC_{B,1/2}(f).
	\end{equation}
A vector $m=m_f(B)\in \ZU$ is said to be a median of the function $f$ on $B$ if there exists a point $x\in M_f(B)$ such that $f(x)=m$.
\end{definition}
It is clear $M_f(B)\neq \varnothing$ and so the set of medians of $f$ on any ball $B$ is non-empty. Observe that for any two points $x,x'\in M_f(B)$ we have $x\in E$ and $x'\in E'$, where both $E$ and $E'$ satisfy the conditions in \e{y99} and so there exists a point $y\in E\cap E'$. Hence we can write 
\begin{align}
	\|f(x)-f(x')\|_\ZU&\le \|f(x)-f(y)\|_\ZU+\|f(y)-f(x')\|_\ZU\\
	&\le \OSC_E(f)+\OSC_{E'}(f)\le 4\OSC_{B,1/2}(f).
\end{align}
Thus we have the following.
\begin{lemma} \label{L7}
	For any function $f\in L^0(X,\ZU)$ and a ball $B\in\ZB$ it holds the inequality 
\begin{equation}
\OSC_{M_f(B)}(f)\le 4\OSC_{B,1/2}(f).
\end{equation}
\end{lemma}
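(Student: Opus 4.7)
The plan is essentially to formalize the pointwise calculation that already appears in the paragraph immediately preceding the lemma. Fix the ball $B$ and the function $f$. By the definition of $M_f(B)$ as the union of measurable sets $E \subset B$ satisfying \e{y99}, almost every point of $M_f(B)$ lies in at least one such $E$; so for (essentially) any pair of points $x, x' \in M_f(B)$ I can pick $E, E' \subset B$, both obeying \e{y99}, with $x \in E$ and $x' \in E'$.

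The key step is the measure-theoretic pigeonhole: since $\mu(E), \mu(E') > \mu(B)/2$ while $E \cup E' \subset B$, inclusion–exclusion gives
\begin{equation*}
\mu(E \cap E') = \mu(E) + \mu(E') - \mu(E \cup E') > \tfrac{1}{2}\mu(B) + \tfrac{1}{2}\mu(B) - \mu(B) = 0.
\end{equation*}
Thus one can choose an intermediate point $y \in E \cap E'$, and the triangle inequality yields
\begin{equation*}
\|f(x) - f(x')\|_\ZU \le \|f(x) - f(y)\|_\ZU + \|f(y) - f(x')\|_\ZU \le \OSC_E(f) + \OSC_{E'}(f).
\end{equation*}
By the second condition in \e{y99} each term on the right is at most $2\OSC_{B,1/2}(f)$, giving the pointwise bound $4\OSC_{B,1/2}(f)$. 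Taking essential supremum over $x, x' \in M_f(B)$ yields the claim.

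The only mild subtlety is that $\OSC_E$ is defined via essential supremum, so the triangle-inequality estimate above is valid only for almost every $y \in E \cap E'$ and for almost every pair $(x, x')$; since $\mu(E \cap E') > 0$, such a $y$ exists, and since we pass to essential supremum on the left anyway, the null-set issues are absorbed automatically. No genuine obstacle appears, and the proof is essentially a one-paragraph argument.
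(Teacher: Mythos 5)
Your proof is correct and is essentially the paper's own argument: the paper proves this lemma in the paragraph immediately preceding it, using the same triangle inequality through a point $y \in E \cap E'$. You merely make explicit the pigeonhole calculation $\mu(E \cap E') > 0$ and the essential-supremum null-set bookkeeping that the paper leaves implicit.
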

The following proposition is a version of a similar bound due to Lerner \cite{Ler1}, where  real valued functions on $\ZR^n$ were considered. Note that in \cite{Ler1} so called local mean oscillation instead of $\alpha$-oscillation \e{y100} was used, which definition uses a non-increasing rearrangement of the function $f$ and is not applicable for vector-valued functions. On the other hand in the case of real-valued functions both definitions of oscillations are equivalent. Similarly, the definitions of medians here and in \cite{Ler1} are also different. Finally, note that the definition of the local mean oscillation and the median for the real-valued functions was introduced by Jawerth and Torchinsky in \cite{JaTo}. 
\begin{proposition}\label{T3}
	Let $\ZB$ be a doubling ball-basis. Then for any measurable function $f\in L^0(X,\ZU)$ and a ball $A_0$ there exist a sparse family of balls $\ZA$ and a martingale-family of measurable sets $\bar \ZA\Subset \ZA$ such that
	\begin{align}
		& \bigcup_{G\in \ZA}G\subset A_0^*,\label{y89}\\
		&\mu\big\{x\in X:\,\sum_{G\in \ZA}\ZI_{\bar G}(x)>\lambda\big\}\lesssim \exp(-c\lambda)\mu(A_0),\quad \lambda>0,\label{y90}\\
		&\|f(x)- m_f(A_0)\|_\ZU\lesssim \sum_{G\in \ZA}\OSC_{G,\beta}(f)\ZI_{\bar G}(x)\text{ for a.e. }x\in A_0,\label{y91}
		\end{align}
	where $0<\beta<1$, $c>0$ are admissible constants and $m=m_f(A_0)$ is a median of a function $f$ over the ball $A_0$.
\end{proposition}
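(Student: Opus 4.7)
My proposal follows the template of the proof of Theorem \ref{T7}: choose an appropriate family of small bad sets $\{F_B\}$, apply the set-theoretic Proposition \ref{P11} to obtain a sparse tree-collection, upgrade it to a martingale family via Proposition \ref{L13}, and then derive \eqref{y91} by telescoping medians along the chain joining the root to the leaf containing $x$.

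Fix admissible constants $\alpha < 1/(10\ZK^7)$ and $\beta \in (1/2, 1)$ with $1-\beta < \alpha$, and for each $B \in \ZB$ set
\[
F_B = \{x \in B : \|f(x) - m_f(B)\|_\ZU > 5\,\OSC_{B,\beta}(f)\},
\]
where $m_f(B)$ is any fixed median of $f$ on $B$. To verify $\mu(F_B) < \alpha\mu(B)$, choose $E \subset B$ with $\mu(E) > \beta\mu(B)$ and $\OSC_E(f)$ close to $\OSC_{B,\beta}(f)$; since $\beta > 1/2$, $E$ meets $M_f(B)$, say at $z$, and for any $x \in E$ the estimate $\|f(x)-m_f(B)\|_\ZU \le \OSC_E(f) + \OSC_{M_f(B)}(f) \le 5\,\OSC_{B,\beta}(f)$ (via \lem{L7}) forces $F_B \subset B \setminus E$, hence $\mu(F_B) < (1-\beta)\mu(B) < \alpha\mu(B)$.

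Apply Proposition \ref{P11} to $A_0$ and $\{F_B\}$ to obtain a sparse tree $\ZA$ rooted at $[[A_0]]$; the doubling hypothesis upgrades a4) to $\mu(G \cap F_A) < \mu(G)/2$ for $G \in \ch(A)$. Proposition \ref{L13} with $E_A = A \setminus F_A$ then gives a martingale family $\bar\ZA \Subset \ZA$ whose good pieces $\bar A \cap (A \setminus F_A)$ are pairwise disjoint and, by a2), cover $A_0$ up to measure zero. For a.e.\ $x \in A_0$, take the chain $[[A_0]]=G_0 \supset G_1 \supset \cdots \supset G_n$ with $x \in \bar G_n \setminus F_{G_n}$ (so $x \in \bar G_j$ for every $j$ by \eqref{y67}), and write $m_j = m_f(G_j)$. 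The key step is that $\mu(G_{j+1} \setminus F_{G_j}) > \mu(G_{j+1})/2$ (by the doubling form of a4)) and $\mu(M_f(G_{j+1})) > \mu(G_{j+1})/2$, so we can pick $y_j$ in their intersection. Then $y_j \notin F_{G_j}$ gives $\|f(y_j) - m_j\|_\ZU \le 5\,\OSC_{G_j,\beta}(f)$, while $y_j \in M_f(G_{j+1})$ combined with \lem{L7} gives $\|f(y_j) - m_{j+1}\|_\ZU \le 4\,\OSC_{G_{j+1},\beta}(f)$. Telescoping, together with $\|f(x) - m_n\|_\ZU \le 5\,\OSC_{G_n,\beta}(f)$, yields
\[
\|f(x) - m_f([[A_0]])\|_\ZU \lesssim \sum_{G \in \ZA}\OSC_{G,\beta}(f)\,\ZI_{\bar G}(x).
\]
The passage from $m_f([[A_0]])$ to $m_f(A_0)$ costs only one extra term (absorbed by adjoining $A_0$ to $\ZA$), and \eqref{y90} follows by iterating a3), which gives $\sum_{G \in \ch^n([[A_0]])}\mu(G) \lesssim (C\alpha)^n \mu(A_0)$; the nested martingale relation $\bigcup_{G \in \ch^{n+1}}\bar G \subset \bigcup_{G \in \ch^n}\bar G$ then produces the claimed layer-cake bound.

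The main technical obstacle is the construction of the point $y_j$: a single element must belong simultaneously to $M_f(G_{j+1})$ (so that its $f$-value approximates $m_{j+1}$ via \lem{L7}) and to $G_{j+1} \setminus F_{G_j}$ (so that its $f$-value approximates $m_j$ by the definition of $F_{G_j}$). It is precisely here that the doubling hypothesis plays an indispensable role, since it is what upgrades property a4) of Proposition \ref{P11} from the condition "$G' \supsetneq G$" to "$G' = G$", forcing the two half-measure subsets of $G_{j+1}$ to overlap.
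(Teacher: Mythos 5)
Your proposal is correct and follows essentially the same architecture as the paper's proof: apply \pro{P11} (with the doubling upgrade of a4)) to a suitable family of small exceptional sets $\{F_B\}$, pass to a martingale family via \pro{L13}, and telescope along the chain of nested balls containing $x$. The only difference is cosmetic: the paper fixes for each ball a set $E_A\subset[A]$ of small oscillation, defines $F_A=[A]\setminus E_A$, and telescopes the values $f(\xi_j)$ through connecting points $\xi_j\in E_{A_{j-1}}\cap E_{A_j}$, whereas you define $F_B$ directly as the set where $f$ is far from a median $m_f(B)$ and telescope the medians $m_j$ through points $y_j\in\bigl(G_{j+1}\setminus F_{G_j}\bigr)\cap M_f(G_{j+1})$; both intersections are nonempty precisely thanks to the half-measure estimate from the doubling form of a4), and both yield the same sum $\sum_{G}\OSC_{G,\beta}(f)\ZI_{\bar G}(x)$ up to admissible constants. (Two small points worth tightening: the constant in your definition of $F_B$ should be at least $6$ rather than $5$, since the triangle inequality gives $\OSC_E(f)+\OSC_{M_f(B)}(f)\le 2\OSC_{B,\beta}(f)+4\OSC_{B,\beta}(f)$; and the passage from $m_f([[A_0]])$ to $m_f(A_0)$ does not require adjoining a new ball — taking $\beta>1-1/(2\ZK^2)$ makes $\OSC_{A_0,1/2}(f)\le\OSC_{[[A_0]],\beta}(f)$, so the correction is absorbed into the root term that is already present.)
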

\begin{proof}
	Given $0<\beta<1$.  For any ball $A\in \ZB$ we may fix a set $E_{A}\subset [A]$ such that
		\begin{equation}\label{a13}
			\mu(E_A)\ge \beta\mu([A]),\quad \OSC_{E_A}(f)\le 2\OSC_{[A],\beta}(f).
		\end{equation}
	For the sets $F_A=[A]\setminus E_A$ we have $\mu(F_A)\le\alpha \mu([A])\lesssim\alpha \mu(A)$, where $\alpha=1-\beta$. Thus, applying \pro{P11}, we find a sparse tree-collection of balls $\ZS$, satisfying a1)-a4). Since our ball-basis is doubling, we may also have \e{y27} for $G'=G$. Besides, for a small enough $\alpha$ inequality \e{s27} may imply $\mu([G])<\mu(A)$ and so we get $[G]\subset A^*\subset [A]$ whenever $G\in \ch(A)$. Hence we can write
	\begin{align}
		&G\in \ch(A)\Rightarrow [G]\subset A^*\subset [A],\label{y45}\\
		&G\in \ch(A)\Rightarrow\mu(G\cap F_A)<\mu(G)/2 \text { (see \e{y27})}.\label{y46}
	\end{align}
Moreover, for an admissible $\alpha$, inequality \e{s27} also implies
\begin{equation}\label{y88}
	\sum_{G\in \ch(A)}\mu([G])\le \mu([A])/2.
\end{equation}
According to a2) we have
\begin{equation}\label{y48}
A_0\subset_{a.e.} \bigcup_{A\in \ZS}A\cap E_{A}\subset \bigcup_{A\in \ZS} E_{A}.
\end{equation}
	 We also claim for a smaller enough $\alpha$ that 
	\begin{equation}\label{a17}
		G\in \ch(A)\Rightarrow E_A\cap E_G\neq\varnothing.
	\end{equation} 
	Indeed, by \e{y45} and \e{y46} we have $\mu(G\cap E_A)>\mu(G)/2$.
	Thus, using also \e{a13}, we get
	\begin{align}
		\mu(E_A\cap E_G)&\ge \mu((E_A\cap G)\cap (E_G\cap G))\\
		&=\mu(E_A\cap G)-\mu((E_A\cap G)\setminus (E_G\cap G))\\
		&=\mu(E_A\cap G)-\mu(G\setminus (E_G\cap G))\\
		&\ge \mu(G)/2-\mu([G]\setminus E_G)\\
		&\ge \mu(G)/2-\alpha\mu([G])\\
		&\ge \mu(G)(1/2-\alpha\ZK)>0,
	\end{align}
	and so \e{a17} follows. 	
	According to \e{y45} the tree-collection of measurable sets $\ZA=\{[A]:\, A\in \ZS\}$ satisfies \e{y44}. Thus, applying \pro{L13} and \e{y48}, we find a martingale family of measurable sets $\bar \ZA\Subset \ZA$ such that
	\begin{equation}
		\bigcup_{A\in \ZS}\bar A\cap E_A=\bigcup_{A\in \ZS}E_A\supset _{a.e.} A_0 \text{ (see \e{y64})}.
	\end{equation}
	Thus for a.e. $x\in A_0$ we have $x\in \bar A \cap E_A$ for some $A\in \ZS$. Fix a unique chain sequence of balls $A_0, A_1,\ldots ,A_n=A$ such that  $A_{j+1}\in \ch(A_j)$, $j=0,1,\ldots,n-1$. According to the martingale property of $\bar \ZA$, the relation $x\in \bar A=\bar A_n$ implies 
	\begin{equation}\label{y35}
		x\in \bigcap_{j=0}^n\bar A_j.
	\end{equation}
	Using \e{a17}, we find points $\xi_j\in E_{A_{j-1}}\cap E_{A_{j}}$, $j=1,2,\ldots, n-1$. Set also $\xi_n=x$. Since $\xi_{j},\xi_{j+1}\in E_{A_j}$, from \e{a13} it follows that
	\begin{equation}\label{a28}
		\|f(\xi_{j})-f(\xi_{j+1})\|_\ZU\le 2\OSC_{[A_j],\beta}(f),\quad j=1,2,\ldots, n-1.
	\end{equation} 
	In addition, if $\beta>1/2$ from \e{a13}  it follows that $E_{A_0}\subset M_f(A_0)$(see \df{D2}). Then, since $\xi_1\in E_{A_0}$, by \lem{L7} we can write
	\begin{equation}\label{a49}
		\|f(\xi_{1})-m_f(A_0)\|_\ZU\le 4\OSC_{[A_0],1/2}(f)\le 4\OSC_{[A_0],\beta}(f).
	\end{equation} 
Applying, \e{y35}, \e{a28} and \e{a49}, we obtain
	\begin{align}
		\|f(x)-m_f(A_0)\|_\ZU&=\|f(\xi_n)-m_f(A_0)\|_\ZU\\
		&\le \|f(\xi_1)-m_f(A_0)\|_\ZU+\sum_{j=1}^{n-1}\|f(\xi_j)-f(\xi_{j+1})\|_\ZU\\
		&\le 4\sum_{j=0}^{n-1} \OSC_{[A_j],\beta}(f)\ZI_{\bar A_j}(x)\\
		&\le 4\sum_{A\in \ZA} \OSC_{A,\beta}(f)\ZI_{\bar A}(x),
	\end{align}
which implies \e{y91}. Relation \e{y89} immediately follows from \e{y45} and \e{y88} implies \e{y90}. This completes the proof.
\end{proof}

\section{Proofs of Theorems \ref{T5} and \ref{T8} ($\varrho=\rho=1/r$, $r\ge 1$)}
We will need the following two standard lemmas.
\begin{lemma}\label{L6}
	For any function $f\in L^1_{loc}(X,\ZU)$ and balls $A\subset B$ we have
	\begin{equation*}
		\|f_A-f_B\|_\ZU\le \left(\frac{\mu(B)}{\mu(A)} \right)^{1/r}\langle f\rangle_{\#,B}.
	\end{equation*}
	\begin{proof}
		We have
		\begin{align}
			\left\|f_{A}-f_{B}\right\|_\ZU&\le \frac{1}{\mu(A)}\int_{A}\|f-f_{B}\|_\ZU\\
			&\le\left(\frac{\mu(B)}{\mu(A)} \right)^{1/r}\left(\frac{1}{\mu(B)}\int_{B}\|f-f_{B}\|_\ZU^r\right)^{1/r}\le \left(\frac{\mu(B)}{\mu(A)} \right)^{1/r}\langle f\rangle_{\#,B}.
		\end{align}
	\end{proof}
\end{lemma}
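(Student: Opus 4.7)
The plan is to bound $\|f_A - f_B\|_{\ZU}$ by writing $f_A$ as an average over $A$, using the fact that $f_B$ is a constant vector (independent of the integration variable) so it can be absorbed into the average, and then applying H\"older's inequality on $A$ followed by enlarging the domain of integration from $A$ to $B$ to produce the sharp oscillation $\langle f\rangle_{\#,B}$.

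In more detail, I would first observe that $f_B$ is a constant in $\ZU$, so $\frac{1}{\mu(A)}\int_A f_B = f_B$, which together with the definition of $f_A$ gives the identity
\begin{equation*}
f_A - f_B = \frac{1}{\mu(A)}\int_A (f - f_B).
\end{equation*}
The Bochner-integral (or ordinary triangle) inequality then yields
\begin{equation*}
\|f_A - f_B\|_{\ZU} \le \frac{1}{\mu(A)}\int_A \|f - f_B\|_{\ZU}.
\end{equation*}

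Next I would apply H\"older's inequality with exponents $r$ and $r/(r-1)$ on the ball $A$, obtaining
\begin{equation*}
\frac{1}{\mu(A)}\int_A \|f - f_B\|_{\ZU} \le \frac{\mu(A)^{1-1/r}}{\mu(A)}\left(\int_A \|f - f_B\|_{\ZU}^r\right)^{1/r} = \frac{1}{\mu(A)^{1/r}}\left(\int_A \|f - f_B\|_{\ZU}^r\right)^{1/r}.
\end{equation*}
Since $A \subset B$, the integral over $A$ is dominated by the integral over $B$, so multiplying and dividing by $\mu(B)^{1/r}$ gives
\begin{equation*}
\frac{1}{\mu(A)^{1/r}}\left(\int_B \|f - f_B\|_{\ZU}^r\right)^{1/r} = \left(\frac{\mu(B)}{\mu(A)}\right)^{1/r}\left(\frac{1}{\mu(B)}\int_B \|f - f_B\|_{\ZU}^r\right)^{1/r} = \left(\frac{\mu(B)}{\mu(A)}\right)^{1/r}\langle f\rangle_{\#,B},
\end{equation*}
which is exactly the claimed bound. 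There is no substantive obstacle: the only subtlety is the use of H\"older with the right exponent pair so that the factor $\mu(A)^{-1}$ combines with $\mu(A)^{1-1/r}$ to leave precisely the ratio $(\mu(B)/\mu(A))^{1/r}$ after enlarging the domain.
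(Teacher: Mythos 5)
Your proof is correct and follows essentially the same route as the paper: triangle inequality on the Bochner average, H\"older (or Jensen) on $A$ to pass to the $L^r$ average, then enlargement of the domain from $A$ to $B$ and rebalancing of the $\mu(A),\mu(B)$ factors. The only tiny thing worth noting is that invoking H\"older with the pair $(r, r/(r-1))$ tacitly assumes $r>1$; for $r=1$ the step is vacuous and the enlargement of the domain alone suffices, so the bound holds for all $r\ge 1$ as the paper intends.
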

\begin{lemma}\label{L10}
	Let the ball-basis $\ZB$ satisfy the doubling condition. Then for any $f\in L^1_{loc}(X,\ZU)$ and balls $A\subset B$ it holds the inequality
	\begin{equation*}
		\left\|f_A-f_{B}\right\|_\ZU\lesssim \log(1+\mu(B)/\mu(A))\cdot \langle f\rangle_{\#,A}^*.
	\end{equation*}
\end{lemma}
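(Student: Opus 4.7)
The plan is to exploit the doubling structure via a telescoping argument along a geometrically growing chain of balls, which is exactly the role \lem{L1} was designed to play.

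First, I would apply \lem{L1} to the pair $A \subset B$ to obtain a chain
\begin{equation*}
A = A_0 \subset A_1 \subset \ldots \subset A_n = [B], \quad \mu(A_{k+1}) \le c \cdot \mu(A_k),
\end{equation*}
for some admissible constant $c$. A glance at the construction in \lem{L1}, which in turn invokes \lem{L2}, shows that consecutive measures at least double: $2\mu([B_k]) \le \mu(B_{k+1})$. Consequently $\mu(A_n) \gtrsim 2^n \mu(A)$, while $\mu(A_n) = \mu([B]) \le \ZK \mu(B)$, so the chain length satisfies
\begin{equation*}
n \lesssim 1 + \log\!\left(\frac{\mu(B)}{\mu(A)}\right) \lesssim \log\!\left(1 + \frac{\mu(B)}{\mu(A)}\right).
\end{equation*}

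Second, I would estimate by the triangle inequality
\begin{equation*}
\|f_A - f_B\|_\ZU \le \sum_{k=0}^{n-1} \|f_{A_k} - f_{A_{k+1}}\|_\ZU + \|f_{[B]} - f_B\|_\ZU
\end{equation*}
and control each summand by \lem{L6}. For the telescoping terms, since $A_k \subset A_{k+1}$ and $\mu(A_{k+1})/\mu(A_k) \le c$,
\begin{equation*}
\|f_{A_k} - f_{A_{k+1}}\|_\ZU \le \left(\frac{\mu(A_{k+1})}{\mu(A_k)}\right)^{1/r} \langle f\rangle_{\#,A_{k+1}} \lesssim \langle f\rangle_{\#,A_{k+1}} \le \langle f\rangle_{\#,A}^*,
\end{equation*}
where the last step uses $A_{k+1} \supset A$ together with the definition of the truncated sharp function. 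For the tail term, \lem{L6} applied to $B \subset [B]$ together with $\mu([B]) \le \ZK \mu(B)$ yields $\|f_B - f_{[B]}\|_\ZU \lesssim \langle f\rangle_{\#,[B]} \le \langle f\rangle_{\#,A}^*$.

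Summing gives $\|f_A - f_B\|_\ZU \lesssim n \cdot \langle f\rangle_{\#,A}^* \lesssim \log(1 + \mu(B)/\mu(A)) \cdot \langle f\rangle_{\#,A}^*$, which is the desired bound. I do not anticipate any real obstacle: both ingredients (the logarithmic chain and the per-step average comparison) are already in hand. The only fiddly point is bookkeeping the dependence of the chain length $n$ on $\log(1 + \mu(B)/\mu(A))$, but since the doubling step in \lem{L2} gives at least a factor of $2$ in measure each time, this is immediate.
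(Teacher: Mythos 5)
Your proof is correct and follows essentially the same route as the paper: both invoke \lem{L1} to produce the geometrically growing chain $A = A_0 \subset \cdots \subset A_n = [B]$, telescope with \lem{L6}, and bound the chain length by $\log(1+\mu(B)/\mu(A))$. You also supply the small verification (that the construction in \lem{L1} via \lem{L2} forces measures to at least double at each step, hence $n \lesssim \log(1+\mu(B)/\mu(A))$) which the paper dispenses with as "one can easily check."
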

\begin{proof} 
	According to \lem {L1}, we may find a sequence of balls $A=A_0\subset A_1\subset \ldots\subset A_n=[B]$ satisfying such that $\mu(A_{k+1})\lesssim \mu(A_k)$. One can easily check that $n\lesssim \log \left(1+\mu(B)/\mu(A)\right)$.
By \lem {L6} we have
	\begin{equation}
		\left\|f_{A_k}-f_{A_{k+1}}\right\|_\ZU\lesssim \langle f\rangle_{A_{k+1},\#}\le \langle f\rangle_{\#,A}^*.
	\end{equation}
	Thus we obtain
	\begin{align*}
		\|f_A-f_B\|_\ZU&\le \sum_{k=0}^{n-1}\|f_{A_k}-f_{A_{k+1}}\|_\ZU+\|f_{B}-f_{[B]}\|_\ZU\lesssim n\langle f\rangle_{\#,A}^*\\
		&\lesssim \log(1+\mu(B)/\mu(A))\langle f\rangle_{\#,A}^*
	\end{align*}
\end{proof}
\begin{proof}[Proof of \trm{T5}]
	Let $f\in L^r(X,\ZU)$ and $B\in \ZB$. Applying restriction condition R5) we find a ball $B'\supset B$ such that 
	\begin{equation}
	\sup_\alpha \OSC_B(T_\alpha(\ZI_{B''}))<\delta
	\end{equation}
for every ball $B''\supset B'$. Then let $G_n$ be the sequence of balls generated from \lem{L5}. We have $\langle f\rangle_{G_n}^*\to 0$ and so there is a ball $G=G_n\supset B'^*$ such that $\langle f\rangle^*_{G} <\delta$.
Applying restriction condition R5), we can write
	\begin{align}
	&B^*\subset G,\\
	&\sup_\alpha \OSC_B(T_\alpha(\ZI_{G}))<\delta,\label{y86}\\
	&\sup_\alpha\OSC_{B}(T_\alpha(f\cdot \ZI_{X\setminus G}))\le \sup_\alpha \ZL_1(T_\alpha)\cdot \langle f\rangle^*_{G}<\delta \sup_\alpha\ZL_1(T_\alpha). \label{y85}
	\end{align}
	Then we can write 
		\begin{align*}
		f&=f\cdot \ZI_{X\setminus G}+(f-f_B)\ZI_{B^*}+(f-f_B)\ZI_{G\setminus B^*}+f_B\ZI_{G}\\
		&=f_0+f_1+f_2+f_3.
	\end{align*}
From \e{y86}, \e{y85} we can obtain
\begin{equation}\label{x2}
	\sup_\alpha \OSC_B(T_\alpha(f_0))<\varepsilon,\quad \sup_\alpha \OSC_B(T_\alpha(f_3))<\varepsilon
\end{equation}
with an arbitrary small $\varepsilon>0$. Consider the set $E_{B}=\{y\in B:\, Tf_1(y)\le \lambda\}$. According to the $\ZL_0$-condition of the maximal operator $T$ for 
	 \begin{align}
	 	\lambda=\ZK^{1/r}(1-\beta)^{-1/r}\ZL_0(T)\langle f_1\rangle_{B^*}
	 \end{align}
	 we have
	\begin{equation}
		\mu(B\setminus E_{B})=\mu\{y\in B:\, Tf_1(y)>\lambda\}\le\frac{(1-\beta )\mu(B^*)}{\ZK}\le  (1-\beta )\mu(B).
	\end{equation}
	Thus, applying also \lem{L6}, we can write
	\begin{align}
		&\mu(E_{B})>\beta\mu(B),\\
		& Tf_1(y)\lesssim(1-\beta)^{-1/r}\ZL_0(T)\langle f-f_B\rangle_{B^*}\\
		&\qquad \quad\le (1-\beta)^{-1/r}\ZL_0(T)(\langle f-f_{B^*}\rangle_{B^*}+\|f_B-f_{B^*}\|_\ZU)\\
		&\qquad \quad\lesssim(1-\beta)^{-1/r} \ZL_0(T)\langle f\rangle^*_{\#,B}\text{ for every }y\in E_B.\label{y87}
	\end{align}
	Now choose two arbitrary points $x,x'\in E_B$ and suppose $Tf(x)\ge Tf(x')$. Then for some $\alpha$ we have
	\begin{align}
		|Tf(x)-Tf(x')|=Tf(x)-Tf(x')\le 2( \|T_\alpha f(x)\|_\ZU-\|T_\alpha f(x')\|_\ZV).
	\end{align}
	On the other hand, using \e{y87}, for small enough $\varepsilon$ in \e{x2} we obtain
	\begin{align}
		\|T_\alpha f(x)\|_\ZV-&\|T_\alpha f(x')\|_\ZV\\
		&\le \|T_\alpha f(x)-T_\alpha f(x')\|_\ZV\\
		&\le \OSC_B(T_\alpha(f_0))+\OSC_B(T_\alpha(f_3))\\
		&\qquad +\|T_\alpha f_1(x)\|_\ZV+\|T_\alpha f_1(x')\|_\ZV+\|T_\alpha f_2(x)-T_\alpha f_2(x')\|_\ZV\\
		&\le 2\varepsilon +\|T f_1(x)\|_\ZV+\|T f_1(x')\|_\ZV+\|T_\alpha f_2(x)-T_\alpha f_2(x')\|_\ZV\\
		&\lesssim (1-\beta)^{-1/r}\ZL_0(T)\langle f\rangle^*_{\#,B}\\
		&\qquad +\|T_\alpha f_2(x)-T_\alpha f_2(x')\|_\ZV.\label{y28}
	\end{align}
	Applying T1)-condition, we get
	\begin{align}
	\|T_\alpha f_2(x)-T_\alpha f_2(x')\|_\ZV&\le \OSC_{B}(T_\alpha (f_2))\\
	&\le \ZL_1(T_\alpha)\sup_{C\in \ZB\,C\supset B}\log^{-1}\left(1+\frac{\mu(C)}{\mu(B)}\right)\langle f_2\rangle_{C}\\
	&\le 2\ZL_1(T_\alpha)\log^{-1}\left(1+\frac{\mu(A)}{\mu(B)}\right)\langle f_2\rangle_{A}\label{x10}
	\end{align} 
	for some ball $A\supset B$.
	Clearly we can suppose that $\mu(A)>2\mu(B)$, since otherwise we would have $A\subset B^*$ that means $\langle f_2\rangle_{A,r}=0$ and so the left hand side of \e {x10} is zero. Thus, applying \lem{L10}, we obtain
	\begin{align}
		\|T_\alpha f_2(x)-T_\alpha f_2(x')\|_\ZV&\le 2\ZL_1(T_\alpha)\log^{-1}\left(1+\frac{\mu(A)}{\mu(B)}\right)\langle f-f_B\rangle_{A}\\
		&\le 2\ZL_1(T_\alpha)\log^{-1}\left(1+\frac{\mu(A)}{\mu(B)}\right)(\langle f-f_A\rangle_{A} +|f_A-f_B|)\\
		&\lesssim \ZL_1(T_\alpha)\log^{-1}\left(1+\frac{\mu(A)}{\mu(B)}\right)\log\left(1+\frac{\mu(A)}{\mu(B)}\right)\langle f\rangle^*_{\#,B}\\
		&=\ZL_1(T_\alpha)\langle f\rangle^*_{\#,B}.\label{y29}
	\end{align}
	Thus, combining \e{y28} and \e{y29}, we obtain \e{y30}.
\end{proof}
\begin{proof}[Proof of \trm{T8}]
	Let $f\in L^r(X,\ZU)$ and $B$ be a ball.  Applying \pro{T3} to the function $T(f)$ with $A_0=B$, we find sparse and martingale families of balls $\bar\ZA \Subset \ZA$, satisfying \e{y89}, \e{y90} and the bound
	\begin{equation}\label{y92}
		|Tf(x)- m_{T(f)}(B)|\lesssim \sum_{G\in \ZA}\OSC_{G,\beta}(T(f))\ZI_{\bar G}(x)\text{ for a.e. }x\in B
	\end{equation}
	Combining  \e{y92} with the oscillation estimate in \e{y30}, we get \e{y94}. Then \e{y89} $\Rightarrow$ \e{y52} and \e{y90} $\Rightarrow$ \e{y53}. 
\end{proof}
\section{Exponential inequalities}
In this section we provide applications of Theorems \ref{T7} and \ref{T8} in exponential decay inequalities as a special types of good-lambda inequalities.
First good-lambda inequalities arreared in the 1970's and till are significant tools in the study of operators in harmonic analysis. Originally the good-lambda technique was introduced by Burkholder and Gundy in \cite{Bur},\cite{BuGu}, where the method was applied to operators generated by martingales. Later, Coifman and Fefferman \cite{CoFe} proved a  good-lambda inequality involving the truncated Calder\'on-Zygmund operator $T^*$ and the Hardy-Littlewood maximal function on $\ZR^n$. Namely they established
\begin{equation}\label{I1}
	w\{T^*(f)>2\lambda,\, M(f)<\beta \lambda\}|\le c(\beta)w\{T^*(f)>\lambda\},\quad \lambda>0,
\end{equation}
where $w$ is an arbitrary $A_\infty$ measure and $c(\beta)\to 0$ as $\beta\to 0$. This inequality provides a control of $T^*$ by mean of the maximal function $M$, namely one can easily deduce the bound $	\|T^*f\|_{L^p(w)}\le c\|Mf\|_{L^p(w)}$.
The classical proof of \e{I1} is based on a local type good-lambda inequality 
\begin{equation}\label{I3}
	|\{x\in B:\, T^*f(x)>2\lambda,\, Mf(x)<\beta \lambda\}|\le c\beta^\delta |B|,\quad \lambda>0,
\end{equation}
where $B$ is an Euclidean cube in $\ZR^n$. In fact, to deduce \e{I1} from \e{I3} one just need to apply \e{I3} to the cubes of the Whitney decomposition of the set $\{T^*(f)>\lambda \}$. Buckley  in \cite{Buck} improved the constant on the right-hand side of \e{I3} replaced it by $c_1\exp(-c_2/\beta)$. Buckley proved this estimate using as a model a more classical inequality due to Hunt \cite{Hunt} for the conjugate function. Buckley's proof is based on an approach given  \cite{Hunt}. Both proofs use the Calder\'on-Zygmund standard $\lambda$-decomposition of the function to a sum of a bounded function $g$ and a function $h$ supported on the intervals outside of the set $\{Mf(x)<\beta \lambda\}\}$. The exponential decay of $\{T(g)>\lambda\} $ is a classical inequality. As for $T(h)$ it has an exponential distribution on the set $\{Mf(x)<\beta \lambda\}\}$, since $T(h)$ admits a pointwise estimate by the $\Delta$-function used in Carleson's celebrated paper (\cite{Car}, Lemma 5). Improving Buckley's inequality, Karagulyan \cite{Kar2} established a new type of exponential inequality
\begin{equation}\label{I4}
	|\{x\in B:\, T^*f(x)>\lambda Mf(x)\}|\le c_1\exp(-c_2\lambda)|B|,
\end{equation}
which proof uses rather different approach than papers \cite{Buck, CoFe} provide. It applies a careful covering of level sets of $T^*(f)/M(f)$ by exponentially sparsening collections of balls. Furthermore such exponential type inequalities involving other operators (Littlewood-Paley square functions, multilinear Calder\'on-Zygmund operators and commutators) were obtained by Ortiz-Caraballo, P\'{e}rez and Rela \cite{Per}. In this context one can also consider the recent paper of Canto and P\'{e}rez \cite{CaPe}, where authors give two interesting extensions of John-Nirenberg theorem in weighted setting. 

Karagulyan in \cite{Kar1} gave a general approach to good-$\lambda$ inequalities, providing domination conditions, which imply good-$\lambda$ inequalities for couples of measurable functions. As an immediate corollary of \pro{T3}, we recover one of two main inequalities of paper \cite{Kar1} for vector-valued functions. 
\begin{definition}
	Let $f,g\in L^0(X,\ZU)$ be measurable functions.  The function  $f$ is said to be strongly dominated by $g$ if for any $0<\alpha<1$ there exists a number $\beta=c(\alpha)>0$ such that 
	\begin{equation}\label{01}
		\OSC_{B,\alpha}(f)< \beta\cdot\INF_{B}(g).
	\end{equation}
\end{definition}
\begin{corollary}
	If the ball-basis $\ZB$ in a measure space $(X,\mu)$ is doubling and measurable functions $f,g\in L^0(X,\ZU)$  satisfy strong domination condition \e{01}, then for any ball $B\in \ZB$ we have
	\begin{equation}\label{6}
		\mu\{x\in B:\,\|f(x)-m_f(B)\|_\ZU>\lambda  \|g(x)\|_\ZU\}\lesssim\exp(-c\cdot\lambda )\mu(B),\quad \lambda>0,
	\end{equation}
	where $c>0$ is an admissible constant and $m_f(B)$ is a median of $f$ on the ball $B$.
\end{corollary}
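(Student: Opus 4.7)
The plan is to apply \pro{T3} directly to the function $f$ with root ball $A_0 = B$. This produces an admissible $0 < \beta < 1$, an admissible $c > 0$, a sparse collection $\ZA$, and a martingale family $\bar \ZA \Subset \ZA$ satisfying \e{y89}, \e{y90}, together with the pointwise bound \e{y91}:
\begin{equation*}
\|f(x)-m_f(B)\|_\ZU \lesssim \sum_{G\in\ZA}\OSC_{G,\beta}(f)\cdot\ZI_{\bar G}(x)\quad\text{for a.e. }x\in B.
\end{equation*}

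The key step is to convert the right-hand side into an expression involving $\|g(x)\|_\ZU$. Since $\beta$ is admissible, the strong domination hypothesis \e{01} supplies an admissible constant $\kappa = c(\beta)$ with $\OSC_{G,\beta}(f) \le \kappa\cdot\INF_G(g)$ for every ball $G\in\ZA$. Because $\bar G \subset G$, for a.e. $x \in \bar G$ one has $\|g(x)\|_\ZU \ge \INF_G(g)$, and therefore
\begin{equation*}
\OSC_{G,\beta}(f)\cdot\ZI_{\bar G}(x)\le \kappa \|g(x)\|_\ZU\cdot\ZI_{\bar G}(x)\quad\text{a.e.}
\end{equation*}
Summing over $G \in \ZA$ and substituting into \e{y91} yields
\begin{equation*}
\|f(x)-m_f(B)\|_\ZU \lesssim \kappa\|g(x)\|_\ZU\sum_{G\in\ZA}\ZI_{\bar G}(x)\quad\text{for a.e. }x\in B.
\end{equation*}

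From here the conclusion is immediate: whenever $\|f(x)-m_f(B)\|_\ZU > \lambda\|g(x)\|_\ZU$ holds at a point $x \in B$, the above forces $\sum_{G\in\ZA}\ZI_{\bar G}(x) > c'\lambda$ for an admissible $c' > 0$. Applying the exponential distribution estimate \e{y90} then gives the desired $\exp(-c\lambda)\mu(B)$ bound.

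The only subtle point concerns balls $G$ with $\INF_G(g) = 0$, where the literal strict inequality in \e{01} cannot be satisfied with a positive right-hand side. In such cases \e{01} (taken over all $\alpha$ arbitrarily close to $1$) forces $f$ to be essentially constant on $G$, so $\OSC_{G,\beta}(f) = 0$ and that summand drops out of \e{y91}; the displayed computation is unaffected. In particular, if $\INF_B(g)=0$ then $f \equiv m_f(B)$ a.e. on $B$ and the target inequality is trivial. I do not foresee any additional obstacle: the argument is essentially an assembly of \pro{T3} with the defining inequality of strong domination, and the main bookkeeping is simply ensuring that every constant introduced depends only on admissible quantities.
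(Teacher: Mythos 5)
Your proof is correct and follows the route the paper intends: the text introduces this statement as an "immediate corollary of \pro{T3}," and your argument—apply \pro{T3} with $A_0=B$, replace $\OSC_{G,\beta}(f)$ by $c(\beta)\INF_G(g)\le c(\beta)\|g(x)\|_\ZU$ a.e.\ on each $\bar G\subset G$ (valid since $\ZA$ is countable), and invoke the exponential bound \e{y90} on $\sum_{G}\ZI_{\bar G}$—is exactly the assembly the paper has in mind. Your side remark about $\INF_G(g)=0$ is harmless but unnecessary: the strict inequality in \e{01} forces $\INF_B(g)>0$ for every ball, hence $\|g\|_\ZU>0$ a.e., so that degenerate case never actually arises under the hypothesis.
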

The following two inequalities are immediate consequences of Theorems \ref{T7} and \ref{T8}. Note that the first one is for $\BO$ operators in most general settings, while in the second one our ball-basis is doubling and we have the restricted version of $\BO$ operators. Hence, given a function $f\in L^r(X)$ and a ball $B\in \ZB$. Applying sparse bound \e{y51} of \trm{T7}, we can write
\begin{equation}\label{x1}
	\|Tf(x)\|_\ZV\lesssim \MM f(x)\sum_{G\in \ZA}\ZI_{\bar G}(x),\quad x\in B,
\end{equation}
then, using also \e{y53}, we immediately arrive to the following exponential estimate.
\begin{corollary}\label{C8}
	Let $(X,\mu)$ be measure space equipped with a ball-basis $\ZB$ (which can also be non-doubling) and $T$ be a general $\BO$-operator. Then the inequality
	\begin{equation}\label{a44}
		\mu\{x\in B:\,\|Tf(x)\|_\ZV>t \cdot \MM f(x)|\}\lesssim \exp (-t/c(T))\mu(B),\quad t>0,
	\end{equation}
	holds for any function $f\in L^r(X,\ZU)$ and a ball $B$, where $c(T)\sim \ZL_0(T)+\ZL_1(T)+\ZL_2(T)$.
\end{corollary}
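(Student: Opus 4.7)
The plan is to execute precisely the strategy sketched in the excerpt: combine the sparse pointwise bound from Theorem \ref{T7} with the counting-function exponential decay in \eqref{y53}. First I would reduce to the case where $f$ has compact support, say $\supp f \subset B_0$ for some ball $B_0 \supset B$. The tail $T(f\cdot \ZI_{X\setminus B_0^*})$ is nearly constant on $B$ by the localization condition T1), with oscillation controlled by $\ZL_1(T)\langle f\rangle_B^* \lesssim \ZL_1(T)\MM f(x)$ for $x\in B$; since this quantity can be absorbed into the right-hand threshold $t\MM f(x)$ at the cost of an admissible constant in $c(T)$, the tail is harmless. (Alternatively, one appeals to the fact, proved in Theorem \ref{T6}, that $T^*$ is itself a $\BO$ operator with comparable constants, and applies Theorem \ref{T7} to $T^*$.)

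Once $f$ is compactly supported, Theorem \ref{T7} supplies a ball $B' \supset B_0$, a $\gamma$-sparse collection $\ZA$, and a martingale family $\bar\ZA\Subset\ZA$ satisfying \eqref{y52}, \eqref{y53}, and
\begin{equation*}
    \|Tf(x)\|_\ZV \lesssim C(T)\sum_{G\in\ZA}\langle f\rangle_G \ZI_{\bar G}(x),\quad x\in B,
\end{equation*}
with $C(T)=\ZL_0(T)+\ZL_1(T)+\ZL_2(T)$. The key observation is that $\bar G\subset G$, so every $x\in\bar G$ lies in the ball $G$, and therefore $\langle f\rangle_G \le \MM f(x)$ by the very definition \eqref{1-1} of the fractional maximal function. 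Pulling $\MM f(x)$ out of the sum yields the announced pointwise bound
\begin{equation*}
    \|Tf(x)\|_\ZV \lesssim C(T)\,\MM f(x)\sum_{G\in\ZA}\ZI_{\bar G}(x),\quad x\in B,
\end{equation*}
which is \eqref{x1} with the dependence on $T$ made explicit.

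To conclude, observe that on the set $\{x\in B:\|Tf(x)\|_\ZV > t\,\MM f(x)\}$ one necessarily has $\MM f(x)>0$, and the pointwise inequality above forces $\sum_{G\in\ZA}\ZI_{\bar G}(x) > t/(c_1 C(T))$ for some admissible constant $c_1>0$. Consequently
\begin{equation*}
    \mu\{x\in B:\|Tf(x)\|_\ZV > t\,\MM f(x)\} \le \mu\Bigl\{x\in X:\sum_{G\in\ZA}\ZI_{\bar G}(x) > t/(c_1 C(T))\Bigr\},
\end{equation*}
and the bound \eqref{y53}, applied with $\lambda=t/(c_1 C(T))$, delivers exactly \eqref{a44} with $c(T)\sim C(T)\sim \ZL_0(T)+\ZL_1(T)+\ZL_2(T)$.

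The only genuine subtlety is the compact-support reduction; every other step is a direct application of Theorem \ref{T7} and the maximal-function definition. The main obstacle is therefore not in the calculation itself but in the preliminary observation that the sparse family produced by Theorem \ref{T7} is a \emph{ball} family, so that the trivial inequality $\langle f\rangle_G\le \MM f(x)$ is available for $x\in\bar G$; this is what permits the clean factorization of $\MM f(x)$ out of the sparse sum and reduces an otherwise complicated weighted estimate to a pure counting-function tail bound already provided by \eqref{y53}.
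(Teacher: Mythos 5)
Your core argument --- applying Theorem~\ref{T7}, observing that $\bar G\subset G$ forces $\langle f\rangle_G\le\MM f(x)$ for every $x\in\bar G$, factoring $\MM f(x)$ out of the sparse sum, and then invoking the counting-function bound \eqref{y53} --- is precisely the paper's one-line proof; it is exactly inequality \eqref{x1} in the text preceding the corollary followed by \eqref{y53}. The place where your write-up goes astray is the preliminary reduction from general $f\in L^r(X,\ZU)$ to compactly supported $f$. Condition T1) bounds only the \emph{oscillation} of the tail $T(f\cdot\ZI_{X\setminus B_0^*})$ on $B$, not its magnitude. Near-constancy gives $\|T(f\cdot\ZI_{X\setminus B_0^*})(x)\|_\ZV\le\|c_0\|_\ZV+C\,\ZL_1(T)\,\MM f(x)$ for some fixed vector $c_0\in\ZV$, and nothing in your argument (nor, directly, in T0)--T2)) controls $\|c_0\|_\ZV$, so it cannot be ``absorbed into the right-hand threshold $t\,\MM f(x)$'' as you claim. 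The parenthetical alternative of passing to $T^*$ does not help either, since Theorem~\ref{T7} applies to $T^*$ under the same hypothesis $\supp f\subset B$. The natural reading of the paper is that Theorem~\ref{T7} --- and hence Corollary~\ref{C8}, whose proof simply invokes it --- is for $f$ with $\supp f\subset B$; the paper performs no reduction, and neither your T1)-absorption argument nor the $T^*$ suggestion supplies a valid one. Aside from this incorrectly argued (and, under that reading, unnecessary) reduction step, your proof coincides with the paper's.
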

Now let 
\begin{equation}
	\MM_\# f(x)=\sup_{B\in \ZB,\, B\ni x} \langle f\rangle_{\#,B}=\sup_{B\in \ZB,\, B\ni x} \langle f\rangle_{\#,B}^*
\end{equation}
denote the sharp maximal function.
Similarly to \e{x1} from \trm{T8} it follows that 
\begin{equation}
	\|Tf(x)-m_{T(f)}(B)\|_\ZV\lesssim \MM_\# f(x)\sum_{G\in \ZA}\ZI_{\bar G}(x),\quad x\in B,
\end{equation}
for any $f\in L^r(X)$ and a ball $B\in \ZB$. Then, using also \e{y52}, we obtain
\begin{corollary}\label{C5}
	Under the assumptions of \trm{T8} for any $f\in L^r(X,\ZU)$ and a ball $B$ the maximal operator $Tf=\sup_\alpha|T_\alpha f|$
	satisfies
	\begin{equation}\label{x3}
		\mu\{x\in B:\,|Tf(x)-m_{T(f)}(B)|>t  \MM_\# f(x)|\}\lesssim \exp (-ct)\mu(B),\, t>0.
	\end{equation}
\end{corollary}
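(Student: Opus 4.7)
The plan is to obtain \e{x3} as an almost immediate consequence of \trm{T8}, in exact analogy with the way \cor{C8} was deduced from \trm{T7}. First I would invoke \trm{T8} to produce the ball $B' \supset B$, the sparse collection $\ZA$ and the martingale collection $\bar \ZA \Subset \ZA$ satisfying \e{y52}, \e{y53} and the pointwise sparse bound \e{y94}, that is $|Tf(x)-m_{T(f)}(B)| \lesssim C(T) \sum_{G\in \ZA}\langle f\rangle_{\#,G}^*\ZI_{\bar G}(x)$ for a.e.\ $x \in B$.

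The key one-line reduction is to collapse the averages $\langle f\rangle_{\#,G}^*$ into a pointwise multiple of $\MM_\# f$. By definition $\langle f\rangle_{\#,G}^* = \sup_{A \in \ZB,\,A \supset G} \langle f\rangle_{\#,A}$, and since $\bar G \subset G$ every ball $A\supset G$ also contains any point $x\in \bar G$. Hence $\langle f\rangle_{\#,A} \le \MM_\# f(x)$ for every such $A$, which gives $\langle f\rangle_{\#,G}^* \ZI_{\bar G}(x) \le \MM_\# f(x) \ZI_{\bar G}(x)$ pointwise. Substituting this into \e{y94} yields exactly the intermediate inequality stated just before the corollary, namely $|Tf(x)-m_{T(f)}(B)| \lesssim C(T)\, \MM_\# f(x) \sum_{G\in\ZA}\ZI_{\bar G}(x)$ for a.e.\ $x\in B$.

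The final step is a distributional reading of this pointwise bound. On the event $\{x \in B : |Tf(x)-m_{T(f)}(B)| > t\, \MM_\# f(x)\}$, restricted to points where $\MM_\# f(x) > 0$, the previous inequality forces $\sum_{G\in\ZA}\ZI_{\bar G}(x) \gtrsim t/C(T)$. Applying the exponential decay \e{y53} then bounds the measure of this set by $\exp(-c\, t/C(T))\,\mu(B)$, which is \e{x3} once $C(T)$ is absorbed into the admissible constant in the exponent. Points where $\MM_\# f(x) = 0$ lie automatically in the complement of the event up to a null set, since the right-hand side of \e{y94} vanishes there. I do not expect a genuine obstacle in this argument: all of the real work sits inside \trm{T8} (hence in \trm{T5} and \pro{P11}), and the corollary is essentially a combinatorial packaging of \e{y94} with \e{y53}.
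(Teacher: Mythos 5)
Your proposal is correct and matches the paper's own derivation almost verbatim: the paper likewise reduces \e{y94} to the pointwise bound $|Tf(x)-m_{T(f)}(B)|\lesssim \MM_\# f(x)\sum_{G\in\ZA}\ZI_{\bar G}(x)$ (the step ``similarly to \e{x1}'') and then reads off the exponential decay from the overlap estimate on $\sum_G\ZI_{\bar G}$. Your treatment is in fact slightly more careful than the paper's one-line sketch, which cites \e{y52} where \e{y53} is evidently the inequality actually being invoked.
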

Let denote by $\BMO(X,\ZU)$ the space of functions $f\in  L^0(X,\ZU)$ such that
\begin{equation}
	\|f\|_\BMO=	\sup_{B\in \ZB}\frac{1}{\mu(B)}\int_{B}|f-f_B|<\infty.
\end{equation}
\begin{corollary}\label{C4}
	Under the assumptions of \trm{T8} the maximal operator $Tf=\sup_\alpha|T_\alpha f|$ is bounded on $\BMO$. More precisely, 
\begin{equation}
\|T(f)\|_{\BMO(X,\ZR)}\lesssim \|f\|_{\BMO(X,\ZU)}
\end{equation}
for any function
\begin{equation}
	f\in L^r(X,\ZU)\cap \BMO(X,\ZU).
\end{equation} 
\end{corollary}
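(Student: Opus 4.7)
The plan is to deduce BMO-boundedness from the exponential decay estimate of Corollary \ref{C5} by a layer-cake integration, after controlling the sharp maximal function $\MM_\#f$ pointwise by the BMO norm of $f$.

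First I would reduce to estimating the $L^1$-oscillation against the median rather than against the mean. For any measurable constant $c$ and any ball $B$,
\begin{equation*}
\frac{1}{\mu(B)}\int_B|Tf-(Tf)_B|\,d\mu\le \frac{2}{\mu(B)}\int_B|Tf-c|\,d\mu,
\end{equation*}
so it suffices to bound the right-hand side uniformly in $B$ with $c=m_{T(f)}(B)$, the median supplied by Corollary \ref{C5}.

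Second, I would establish the pointwise bound
\begin{equation*}
\MM_\#f(x)\le C_1\,\|f\|_{\BMO(X,\ZU)},\qquad x\in X.
\end{equation*}
For $r=1$ this is immediate from the definitions $\MM_\#f(x)=\sup_{B\ni x}\langle f\rangle_{\#,B}$ and $\|f\|_\BMO=\sup_B\frac{1}{\mu(B)}\int_B\|f-f_B\|_\ZU$. For $r>1$, since the ball-basis is doubling by R1), a standard Calderón--Zygmund stopping-time argument (built from \lem{L8} applied to the sub-level sets of $\|f-f_B\|_\ZU$ on each ball) yields the John--Nirenberg inequality and, a fortiori, the equivalence of $L^r$- and $L^1$-oscillations up to admissible constants.

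Third, with these two ingredients in hand the argument is immediate. For any $s>0$, if $|Tf(x)-m|>s$ and $\MM_\#f(x)\le C_1\|f\|_\BMO$, then
\begin{equation*}
|Tf(x)-m|>\frac{s}{C_1\|f\|_\BMO}\,\MM_\#f(x),
\end{equation*}
so Corollary \ref{C5} applied with $t=s/(C_1\|f\|_\BMO)$ gives
\begin{equation*}
\mu\{x\in B:|Tf(x)-m|>s\}\lesssim \exp\!\left(-\frac{c\,s}{C_1\|f\|_\BMO}\right)\mu(B).
\end{equation*}
Integrating via the layer-cake formula and changing variables,
\begin{equation*}
\frac{1}{\mu(B)}\int_B|Tf-m|\,d\mu=\frac{1}{\mu(B)}\int_0^\infty\mu\{|Tf-m|>s\}\,ds\lesssim\int_0^\infty e^{-cs/(C_1\|f\|_\BMO)}\,ds\lesssim \|f\|_\BMO.
\end{equation*}
Taking the supremum over balls $B$ gives $\|Tf\|_{\BMO(X,\ZR)}\lesssim\|f\|_{\BMO(X,\ZU)}$.

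The main obstacle is the second step, i.e. justifying $\MM_\#f\lesssim\|f\|_\BMO$ in the present abstract framework when $r>1$; this is the John--Nirenberg inequality, which in turn rests on the doubling assumption R1) and the covering lemmas of Section \ref{S2}. The remaining steps (median reduction, layer-cake integration) are routine once this pointwise bound is available.
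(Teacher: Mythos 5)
Your proposal is correct, and the overall skeleton matches the paper's: both establish the pointwise bound $\MM_\#f\lesssim\|f\|_\BMO$, feed it into the exponential decay of Corollary \ref{C5}, and integrate by layer-cake, with the median-to-mean reduction being the standard factor-of-two observation. The place where you take a genuinely different route is the derivation of $\MM_\#f\lesssim\|f\|_\BMO$ for $r>1$, which requires a John--Nirenberg inequality in this abstract setting. You propose to build it from scratch by a classical Calder\'on--Zygmund stopping-time argument based on \lem{L8}; the paper instead obtains it by a short bootstrap, applying Corollary \ref{C5} itself to the identity operator $T(f)=f$ (a restricted-type $\BO$ operator with $r=1$ for which $\MM_\#f\le\|f\|_\BMO$ holds trivially), which yields the exponential distribution estimate
\begin{equation*}
\mu\{x\in B:\,\|f(x)-m\|_\ZU>t\,\|f\|_\BMO\}\lesssim e^{-ct}\mu(B)
\end{equation*}
and hence the $L^r$-oscillation bound at once. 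The paper's route is shorter and self-contained within the framework already built; yours would work but requires developing the full abstract John--Nirenberg machinery, which is not carried out in the paper and would need more than a citation of ``standard.'' If you keep your route, that sub-step deserves a careful argument; otherwise, noting that the identity operator is itself a restricted $\BO$ operator to which Corollary \ref{C5} applies closes the gap with much less work.
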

\begin{proof}
	First apply \cor{C5} for the identical operator $T(f)=f$, which is obviously a restricted type of $\BO$ operator with parameter $r=1$. We have
	$\MM_\#f(x)\le \|f\|_{\BMO}$ for all $x\in X$, where we suppose $r=1$ in the definition of $\MM_\#$. Thus, applying \e{x3}, we obtain the John-Nirenberg inequality
	\begin{equation}\label{x4}
		\mu\{x\in B:\,\|f(x)-m\|_{\ZU}>t \cdot\|f\|_\BMO\}\lesssim \exp (-c\cdot t)\mu(B),\quad t>0,
	\end{equation}
where by a standard argument  $m$ can be also replaced by $f_B$. Returning to the general case of $r\ge 1$ from \e{x4} it follows that 
\begin{equation}\label{x5}
	\MM_\#f(x)\lesssim \|f\|_\BMO,\quad x\in X.
\end{equation}
Now let $T$ be the operator in \cor{C5}. From \e{x3} and \e{x5} we obtain
\begin{equation}
	\mu\{x\in B:\,\|Tf(x)-m_{T(f)}(B)\|_\ZV>t \cdot\|f\|_\BMO\}\lesssim \exp (-c\cdot t)\mu(B),\quad t>0,
\end{equation}
then by the same standard argument we get $\|T(f)\|_\BMO\lesssim \|f\|_\BMO$.
\end{proof}
Finally observe that $\BO$ operators satisfying a strong-sublinearity condition
\begin{equation}\label{y41}
	\|Tf(x)-Tg(x)\|_\ZV\le \|T(f-g)(x)\|_\ZV
\end{equation}
boundedly map $L^\infty$ to $\BMO$. Clearly, \e {y41} implies the sublinearity of $T$ and all linear operators as well as positive sublinear operators are strong-sublinear. Hence  the following statement is true.
\begin{theorem}\label{T1}
	Every strong-sublinear $\BO$ operator $T$ boundedly maps $L^\infty$ into $\BMO$. Moreover,
	\begin{equation*}
		\|T(f)\|_{\BMO(X,\ZV)}\lesssim (\ZL_0(T)+\ZL_1(T)+\ZL_2(T))\|f\|_{L^\infty(X,\ZU)}.
	\end{equation*}
\end{theorem}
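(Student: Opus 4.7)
The plan is to estimate the mean oscillation of $Tf$ on each ball $B\in\ZB$ by an admissible constant times $\|f\|_{L^\infty(X,\ZU)}$, which directly yields $\|Tf\|_{\BMO(X,\ZV)}\lesssim(\ZL_0(T)+\ZL_1(T)+\ZL_2(T))\|f\|_{L^\infty(X,\ZU)}$.

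First I would split $f=f_1+f_2$, where $f_1=f\cdot\ZI_{B^*}$ and $f_2=f\cdot\ZI_{X\setminus B^*}$. Strong-sublinearity \eqref{y41} applied to the pair $(f,f_2)$ gives for a.e.\ $x\in X$
\[
  \|Tf(x)-Tf_2(x)\|_\ZV \le \|T(f-f_2)(x)\|_\ZV = \|Tf_1(x)\|_\ZV,
\]
which is the essential use of the hypothesis and is not available from plain sublinearity. This reduces the oscillation control of $Tf$ on $B$ to handling $Tf_1$ and $Tf_2$ separately.

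For $Tf_2$ I would exploit T1). Since $f\in L^\infty(X,\ZU)$, in the classical parametrisation $\varrho=\rho=1/r$ one has $\langle f\rangle_A^*\le \|f\|_{L^\infty(X,\ZU)}$ uniformly in $A$, so T1) yields $\OSC_B(Tf_2)\le \ZL_1(T)\langle f\rangle_B^*\lesssim \ZL_1(T)\|f\|_{L^\infty(X,\ZU)}$. Choosing any $x_0$ in the full-measure subset of $B$ on which this essential-supremum bound is realised, and setting $c:=Tf_2(x_0)$, I then combine with the strong-sublinearity display to obtain
\[
  \|Tf(x)-c\|_\ZV\le \|Tf_1(x)\|_\ZV+\ZL_1(T)\|f\|_{L^\infty(X,\ZU)}\quad\text{for a.e.\ }x\in B.
\]
For $Tf_1$ I would exploit T0). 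Since $\supp f_1\subset B^*\subset[B]$ with $[B]\in\ZB$ and $\mu([B])\le \ZK\mu(B)$, applying T0) on $[B]$ to the function $f_1=f_1\cdot\ZI_{[B]}$, together with the trivial bound $\langle f_1\rangle_{[B]}\le \|f\|_{L^\infty(X,\ZU)}$, produces a weak-type distributional estimate for $\|Tf_1\|_\ZV$ on $[B]$ that integrates to $\tfrac{1}{\mu(B)}\int_B\|Tf_1\|_\ZV\lesssim \ZL_0(T)\|f\|_{L^\infty(X,\ZU)}$ by a layer-cake argument. Integrating the previous display over $B$ and noting that replacing the constant $c$ by $(Tf)_B$ costs at most a factor of $2$ finishes the proof.

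The main technical point is the last integration: the weak-type exponent in T0) is $1/\rho$, so the layer-cake argument converges at infinity only when $\rho<1$. In the borderline case $\rho=1$ one instead estimates $L^p$ averages of $Tf-c$ for some $p<1$, which still control the $\BMO$ norm through a John--Nirenberg-type equivalence. Apart from this minor bookkeeping, everything hinges on \eqref{y41}, which is exactly what allows $Tf(x)$ to be compared with the single vector $c=Tf_2(x_0)$ up to the local error term $\|Tf_1(x)\|_\ZV$.
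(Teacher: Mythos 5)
Your decomposition matches the paper's: split $f = f\cdot\ZI_{B^*} + f\cdot\ZI_{X\setminus B^*}$, apply the strong-sublinearity \eqref{y41} to compare $Tf$ with $T(f\cdot\ZI_{X\setminus B^*})$ up to the error $\|T(f\cdot\ZI_{B^*})\|_\ZV$, and control the oscillation of the far part by T1). Where you diverge is in the treatment of the local part $T(f\cdot\ZI_{B^*})$. The paper does not appeal to T0) at all here: it first records that the sparse domination of \trm{T7} gives the strong bound $\|T\|_{L^p\to L^p}\lesssim \ZL_0+\ZL_1+\ZL_2$ for any $p>r$, and then uses Jensen to pass from that $L^p$ bound to the $L^1$ average $\frac{1}{\mu(B)}\int_B\|T(f\cdot\ZI_{B^*})\|_\ZV$. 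That route is uniform in the parameters and has no borderline. You instead apply T0) directly and integrate by layer cake, which is more elementary in the sense that it bypasses \trm{T7}, but it only closes when the weak-type exponent satisfies $1/\rho>1$.

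The fix you offer for $\rho=1$ — bound $L^p$-oscillations for some $p<1$ by Kolmogorov and then invoke a John--Nirenberg-type equivalence — is where the gap lies. The direction you need, that $\sup_B\inf_c\big(\frac{1}{\mu(B)}\int_B\|g-c\|^p\big)^{1/p}<\infty$ for some $p<1$ forces $g\in\BMO$, is the nontrivial (Str\"omberg/Lerner) direction, and in this abstract setting it is only established in the paper for \emph{doubling} ball bases, via the median decomposition \pro{T3} and \cor{C4}. \trm{T1} carries no doubling assumption, so that route is not available in general; and even when the basis is doubling, the paper's John--Nirenberg is itself derived from the sparse/median machinery, so invoking it here would be using more than you advertise. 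Note also that the classical parametrisation with $r=1$ — exactly $\rho=1$ — covers martingale transforms and is squarely in scope, so this is not an exotic corner case. If you want to keep a T0)-based argument, the cleanest repair is essentially the paper's: use the sparse bound to get the strong $L^p$ estimate and finish with Jensen.
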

\begin{proof}
	Sparse domination \e{y51} implies boundedness of $T$ on $L^p$ for any $r<p<\infty$.  Namely we can write
	\begin{equation}\label{x15}
		\|T\|_{L^p(X,\ZU)\to L^p(X,\ZV)}\lesssim (\ZL_0(T)+\ZL_1(T)+\ZL_2(T)).
	\end{equation}
	Take a function $f\in L^\infty (X,\ZU)$ and for $B\in \ZB$ denote
	\begin{equation}\label{x8}
		\alpha_B=\left(T(f\cdot \ZI_{X\setminus B^*})\right)_B.
	\end{equation}
	Strong-subadditivity of $T$ implies
	\begin{equation*}
		\|Tf-T(f\cdot \ZI_{X\setminus B^*})\|_\ZV\le \|T(f\cdot \ZI_{B^*})\|_\ZV.
	\end{equation*}
	Hence, applying T1)-condition and \e{x15}, we get
	\begin{align*}
		\langle Tf-\alpha_B\rangle_{B}&=\left\langle Tf-T(f\cdot \ZI_{X\setminus B^*})+T(f\cdot \ZI_{X\setminus B^*})-\alpha_B\right\rangle_{B}\\
		&\le \left\langle T(f\cdot \ZI_{B^*})\right\rangle_{B}+\OSC_{B}(T(f\cdot \ZI_{X\setminus B^*}))\\
		&\lesssim \left\langle T(f\cdot \ZI_{B^*} \right\rangle_{B}+\ZL_1(T)\langle f\rangle_B^*\\
		&\lesssim \left(\frac{1}{\mu(B)}\int_X\|T(f\cdot \ZI_{B^*}\|_\ZV^p\right)^{1/p}+\ZL_1(T)\|f\|_\infty\\
		&\le \|T\|_{L^p\to L^p}\left(\frac{1}{\mu(B)}\int_X\|f\cdot \ZI_{B^*}\|_\ZU^p\right)^{1/p}+\ZL_1(T)\|f\|_\infty\\
		&\le (\ZL_0(T)+\ZL_1(T)+\ZL_2(T))\|f\|_\infty.
	\end{align*}
\end{proof}
\noindent
\textbf{Remarks:}
\begin{enumerate}
	\item In the case when $T$ is the identical operator on $L^p(\ZR^d)$, i.e. $T(f)=f$, inequality \e{x3} was proved in \cite{Kar2}. Also note that if in addition $f\in \BMO(\ZR^d)$, then it gives the John-Nirenberg classical theorem.
	\item When $T$ is a general $\BO$ operator (on doubling ball bases)  inequality \e{x3} with the usual maximal function $\MM $ instead of $\MM_\# $ was proved in \cite{Kar2} (see also  \cite{Kar1} for the case of classical a Calder\'on-Zygmund operators).

\end{enumerate}

\section{Examples of $\BO$ operators and corollaries}

\subsection{Maximally modulated Calder\'on-Zygmund operators} It was proved in \cite{Kar3} that Calder\'on-Zygmund operators on metric measure spaces of homogeneous type are $\BO$ operators. Here we consider Calder\'on-Zygmund operators, which are restricted type $\BO$ operators, namely satisfy R1)-R5). For the sake of simplicity we state our results on Euclidean spaces $\ZR^d$. A Calder\'on-Zygmund operator $T$ is a linear operator such that
\begin{equation}
	Tf(x)=\int_{\ZR^d}K(x,y)f(y)dy,\quad x\notin \supp f,
\end{equation}
for any compactly supported continuous function $f\in C(\ZR^d)$, where the kernel $K$ satisfies the conditions
\begin{align}
	&|K(x,y)|\le \frac{C}{|x-y|^d}\text{ if }x\neq y,\label{x24}\\
	&|K(x,y)-K(x',y)|\le \omega\left(\frac{|x-x'|}{|x-y|}\right)\cdot \frac{1}{|x-y|^d},\\
	&\text{ whenever } |x-y|>2|x-x'|.\label{x9}
\end{align}
Here $\omega:(0,1)\to (0,1)$ is non-decreasing and satisfies the Dini condition $\int_0^1\omega(t)/tdt<\infty$. Also it is supposed that $T$ can be  boundedly extended on $L^2(\ZR^d)$. Given function $g\in L^\infty(\ZR^d)$ consider the operator $T_g(f)=T(gf)$.
\begin{lemma}
	If a Calder\'on-Zygmund kernel satisfies logarithmic Dini condition
	\begin{equation}
		[\omega]=\int_0^1\frac{\omega(t)\log(1/t)}{t}dt<\infty,\label{x6}
	\end{equation} 
	then the operators $T_{g}$ satisfy \e{y83} uniformly with respect to functions $g\in L^\infty(\ZR^d)$, $\|g\|_\infty\le 1$.
\end{lemma}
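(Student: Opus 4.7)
The plan is to unwind the oscillation of $T_g(f \cdot \ZI_{X\setminus B^*})$ on $B$ using the kernel regularity \eqref{x9}, then dyadically decompose the integration region $\ZR^d \setminus B^*$ into annuli to obtain a sum which is compared against the sup on the right-hand side of \eqref{y83}. Fix $x, x' \in B$, and write $B = B(x_0, s)$; since $B^*$ is comparable to a fixed dilate of $B$, we have $|x-y| \ge c|x-x'|$ for $y \notin B^*$, so that
\begin{equation*}
  |T_g(f\ZI_{X\setminus B^*})(x) - T_g(f\ZI_{X\setminus B^*})(x')|
  \;\le\; \int_{\ZR^d\setminus B^*} \omega\!\left(\frac{|x-x'|}{|x-y|}\right)\frac{|f(y)|}{|x-y|^d}\,dy,
\end{equation*}
using $\|g\|_\infty \le 1$, which already gives uniformity in $g$.

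Next I would split $\ZR^d\setminus B^*$ into dyadic annuli $A_k = 2^{k+1}B \setminus 2^k B$ with $k \ge k_0$ (where $k_0$ is chosen so $2^{k_0}B \supset B^*$), noting that on $A_k$ one has $|x-y| \sim 2^k s$ and $|x-x'|/|x-y| \lesssim 2^{-k}$. Since $\omega$ is monotone and subadditive (really monotone is enough here), each annular piece is bounded by
\begin{equation*}
  \omega(C\cdot 2^{-k})\cdot\frac{1}{(2^k s)^d}\int_{A_k}|f|
  \;\lesssim\; \omega(C\cdot 2^{-k})\,\langle f\rangle_{2^{k+1}B},
\end{equation*}
where the last step invokes Hölder's inequality (valid since $r \ge 1$) with respect to the ball $2^{k+1}B$ to replace $L^1$-average by $L^r$-average, which coincides with $\langle f\rangle_{2^{k+1}B}$ when $\varrho = \rho = 1/r$.

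Now denote $M = \sup_{A\in\ZB,\,A\supset B}\log^{-1}(1+|A|/|B|)\,\langle f\rangle_A$, so that $\langle f\rangle_{2^{k+1}B} \le M\,\log(1 + 2^{(k+1)d}) \lesssim M\,(k+1)$. Summing over $k$ then yields
\begin{equation*}
  \OSC_B\bigl(T_g(f\ZI_{X\setminus B^*})\bigr)
  \;\lesssim\; M\sum_{k\ge 0}(k+1)\,\omega(C\cdot 2^{-k})
  \;\lesssim\; M\int_0^{1}\frac{\omega(t)\log(1/t)}{t}\,dt
  \;=\; M\cdot[\omega],
\end{equation*}
with constants independent of $g$. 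This is exactly \eqref{y83} with $\ZL_1(T_g) \lesssim [\omega]$.

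The only real subtlety is passing from the pointwise annular estimate to the sup on the right-hand side: the naive estimate by $\langle f\rangle_{2^{k+1}B}$ loses a logarithmic factor compared to the supremum, which is precisely why the usual Dini condition $\int_0^1 \omega(t)/t\,dt < \infty$ (sufficient for the ordinary T1) localization) must be upgraded to the logarithmic Dini condition \eqref{x6} to make the series converge. Once this is in place the rest is standard.
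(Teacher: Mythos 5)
Your proof is correct and follows essentially the same approach as the paper's: use the kernel regularity \eqref{x9} with $\|g\|_\infty \le 1$ to reduce to an integral of $\omega(|x-x'|/|x-y|)|f(y)|/|x-y|^d$, decompose into dyadic annuli where $\omega \lesssim \omega(2^{-k})$, and match the $\log^{-1}(1+\mu(A)/\mu(B)) \sim 1/k$ factor against the $k\,\omega(2^{-k})$ terms so that the series converges under the logarithmic Dini condition \eqref{x6}. The only superficial differences are that the paper compares $x$ to the center $x_0$ rather than to an arbitrary $x'$, and leaves the Hölder step (from the $L^1$-average to $\langle f\rangle_A$) implicit where you make it explicit; both are immaterial.
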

\begin{proof}
	Let $f\in L^r(\ZR^d)$ and $B=B(x_0,R)\subset \ZR^d$ be an Euclidean ball of center $x_0$ and radius $R$. One can check that $B^*=B(x_0,R^*)$, where $R^*=(1+2\cdot 2^{1/d})R\in [R,5R]$. If any point $x\in B=B(x_0,R)$, using \e{x9} and \e{x6}, we get
	\begin{align}
		|T_{g}(f\cdot & \ZI_{\ZR^d\setminus B^*})(x)-T_{g}(f\cdot \ZI_{\ZR^d\setminus B^*})(x_0)|\\
		&\lesssim \int_{|y-x_0|>R^*}|K(x,y)-K(x_0,y)||f(y)|dy\\
		&\le \sum_{k=0}^\infty\int_{2^{k+1}R^*\ge |y-x_0|>2^kR^*}\omega\left(\frac{|x-x_0|}{|x_0-y|}\right)\cdot \frac{1}{|x_0-y|^d}|f(y)|dy\\
		&\lesssim  \sum_{k=1}^\infty k\omega(2^{-k})\frac{1}{k |B(x_0,2^{k+1}r)|}\int_{B(x_0,2^{k+1}r)}|f(y)|dy\\
		&\lesssim[\omega] \sup_{A\supset B}\log^{-1}\left(1+\frac{\mu(A)}{\mu(B)}\right)\langle f\rangle_{A},
	\end{align}
where $\sup$ is taken over all the balls $A\supset B$. Thus we get R4). 
\end{proof}
One  can similarly prove the following
\begin{lemma}
	If 
	\begin{equation}\label{x7}
		[K]=\sup_{ R,R':\, R'>2R}\,\sup_{t\in B(x,R)}\left|\int_{B(x,R')}K(x,y)-K(t,y)dy\right|<\infty,
	\end{equation} 
	then the  operators $T_{g}$, $\|g\|_\infty\le 1$, satisfy restriction condition R5) uniformly.
\end{lemma}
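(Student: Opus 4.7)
The plan is to mirror the proof of the preceding lemma, exchanging the logarithmic Dini hypothesis \e{x6} for the cancellation hypothesis \e{x7}. Fix a Euclidean ball $B = B(x_0, R) \subset \ZR^d$ and $\varepsilon > 0$; I aim to produce a radius $R^* = R^*(R,\varepsilon)$, independent of $g$, so that $B' := B(x_0, R^*)$ verifies R5). For any $B'' \supset B'$ and any $x,x' \in B$, the kernel representation gives
\begin{equation*}
T_g(\ZI_{B''})(x) - T_g(\ZI_{B''})(x') = \int_{B''} \bigl(K(x,y) - K(x',y)\bigr) g(y)\,dy,
\end{equation*}
and I split the integration at radius $R^*$ around $x_0$ into an inner part over $B(x_0, R^*) \subset B''$ and an outer part over $B''\setminus B(x_0, R^*)$.

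For the outer piece I repeat the argument of the preceding lemma. Choosing $R^* \ge 4R$ forces $|y - x| \ge R^*/2 \ge 2|x - x'|$ on the outer region, so the Dini smoothness \e{x9} applies pointwise; a dyadic annular decomposition around $x_0$ together with $\|g\|_\infty \le 1$ gives
\begin{equation*}
\Bigl|\int_{B'' \setminus B(x_0,R^*)}(K(x,y)-K(x',y)) g(y)\,dy\Bigr| \lesssim \int_0^{cR/R^*} \frac{\omega(s)}{s}\,ds,
\end{equation*}
which is forced below $\varepsilon/2$ by taking $R^*/R$ sufficiently large, thanks to the Dini integrability of $\omega$.

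For the inner piece I use the identity $K(x,y) - K(x',y) = (K(x_0,y) - K(x',y)) - (K(x_0,y) - K(x,y))$ to split it into two integrals of the shape $\int_{B(x_0, R^*)} (K(x_0, y) - K(t, y)) g(y)\,dy$ with $t \in \{x, x'\} \subset B(x_0, R)$ and $R^* > 2R$; this is exactly the configuration of \e{x7}. The hypothesis \e{x7} supplies the bound $[K]$ for the ungrafted version of each such integral. Combining with $\|g\|_\infty \le 1$ and the smoothness \e{x9} off the diagonal bounds each inner piece by a quantity that decays to zero as $R^*/R \to \infty$, so it can be made less than $\varepsilon/2$ by choosing $R^*/R$ large; adding the two estimates yields $\OSC_B(T_g(\ZI_{B''})) \le \varepsilon$, giving R5) uniformly in $g$.

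The main obstacle is the inner piece: \e{x7} as written provides only uniform boundedness by $[K]$, whereas R5) requires arbitrary smallness as $R^*/R \to \infty$. The needed decay is extracted by observing that under \e{x7} combined with the tail smoothness \e{x9} the integral $\int_{B(x_0,R^*)} (K(x_0,y) - K(t,y))\,dy$ not only stays bounded but converges as $R^*/R \to \infty$ (concretely, $\ln|(R^*+s)/(R^*-s)| \to 0$ for the Hilbert transform with $s = t - x_0$), and this decay then transfers through the $L^\infty$-bounded factor $g$, closing the argument with $R^*$ depending only on $R$ and $\varepsilon$.
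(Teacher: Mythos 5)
You correctly isolate the obstacle, but the resolution does not work. The step "this decay then transfers through the $L^\infty$-bounded factor $g$" is not valid. Writing $F(y)=K(x_0,y)-K(t,y)$, the fact that $\int_{B(x_0,R^*)}F\to 0$ does not yield $\int_{B(x_0,R^*)}Fg\to 0$ for arbitrary $\|g\|_\infty\le 1$; taking $g=\mathrm{sign}\,F$ already shows the two quantities are independent. To bound $\int_{B(x_0,R^*)}Fg$ uniformly over such $g$ one needs $\int_{B(x_0,R^*)}|F|$ small, and this is not the case here: on the annulus $2R\le|y-x_0|\le R^*$ the only available bound is the smoothness estimate \e{x9}, which gives a contribution of size $\int_{R/R^*}^{1/2}\omega(s)\,s^{-1}\,ds$, a quantity that increases towards the full Dini constant as $R^*/R\to\infty$ rather than tending to zero; and on $|y-x_0|\lesssim R$ neither \e{x9} nor the formal kernel representation even applies, since $x,x'$ lie inside the region of integration. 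So the cancellation encoded by \e{x7} is entirely inert against multiplication by an arbitrary bounded $g$, and the inner piece cannot be made small this way. (Even for $g\equiv 1$ your argument only gives that the ungrafted integral converges, not that its limit is zero; \e{x7} alone gives boundedness, not vanishing.)

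This gap also cannot be closed by a different splitting. Taking $K(x,y)=1/(x-y)$, which satisfies \e{x7} with $[K]\le\log 3$, and $g(y)=e^{iy}$, the function $T_g(\ZI_{(-R',R')})(x)=\mathrm{p.v.}\int_{-R'}^{R'}e^{iy}(x-y)^{-1}\,dy$ converges uniformly on $(-R,R)$ to $-i\pi e^{ix}$ as $R'\to\infty$, so $\OSC_{(-R,R)}(T_g(\ZI_{(-R',R')}))\to 2\pi\min(\sin R,1)$, a fixed nonzero number; hence for every $\varepsilon$ smaller than this limit no ball $B'$ makes R5) hold. Note the paper gives no proof of this lemma beyond the remark "one can similarly prove the following," so there is no official argument to compare against; but the example above shows that any correct argument must exploit additional structure of the modulation (a cancellation or normalization condition on $g$, say for $g=e^{2\pi iQ}$ with $Q$ a polynomial) beyond the bare bound $\|g\|_\infty\le 1$, or else work with a reformulated version of R5). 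As written, the statement cannot be established.
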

Papers \cite{Zor}, \cite{Lie} considered operator
\begin{equation}\label{x8}
	Tf(x)=\sup_{Q\in \ZQ_d}\left|\int_{\ZR^d}K(x,y)e^{2\pi i Q(y)}f(y)dy\right|
\end{equation}
where $K(x,y)$ is a H\"older continuous Calder\'on-Zygmund kernel and $\ZQ_d$ denotes the family of $d$-dimensional polynomials on $\ZR^d$. It was proved that operator \e{x8} is bounded on $L^p(\ZR^d)$ for every $1<p<\infty$. The H\"older continuous Calder\'on-Zygmund kernels satisfy \e{x9} with $w(t)=t^\delta$, $\delta>0$, and so we have \e{x6}. Thus, applying Theorems \ref{T5} and \ref{T8}, we arrive to the following
\begin{theorem}
Let $\{Q_\alpha\}\subset \ZQ_d$ be a family of polynomials and $K(x,y)$ be a Calder\'on-Zygmund kernel with modulus of continuity $w(t)=t^\delta$, $\delta>0$. If $K$ satisfies \e{x7}, then for the operator 
\begin{equation}\label{x14}
	Tf(x)=\sup_{\alpha}\left|\int_{\ZR^d}K(x,y)e^{2\pi i Q_\alpha(y)}f(y)dy\right|
\end{equation}
we have bounds \e{y94}, \e{y30} and \e{x3}.
\end{theorem}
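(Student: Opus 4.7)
The plan is to verify that the family $\{T_\alpha\}$ given by $T_\alpha f(x)=\int_{\ZR^d}K(x,y)e^{2\pi i Q_\alpha(y)}f(y)\,dy$ is a family of uniformly restricted $\BO$ operators on the Euclidean ball-basis of $\ZR^d$, and then to invoke Theorems \ref{T5} and \ref{T8} together with \cor{C5} to obtain the three inequalities simultaneously.

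Verifying R1)--R5) is essentially routine modulo the two preliminary lemmas stated just above the theorem. The Euclidean ball-basis is doubling, giving R1), and each $T_\alpha$ is linear, giving R2). Writing $T_\alpha=T_{g_\alpha}$ with $g_\alpha(y)=e^{2\pi i Q_\alpha(y)}$, which satisfies $\|g_\alpha\|_\infty=1$, the first preliminary lemma yields R4) uniformly in $\alpha$, because the H\"older modulus $\omega(t)=t^\delta$ trivially satisfies the logarithmic Dini condition \e{x6}. The second preliminary lemma, combined with the standing hypothesis \e{x7} on the kernel $K$, yields R5) uniformly in $\alpha$.

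For R3) and the overall constant $\ZL_0(T)$ appearing in \e{y84}, I would invoke the $L^p$-boundedness of the maximally modulated operator $Tf=\sup_\alpha|T_\alpha f|$ established in \cite{Zor, Lie} for every $1<p<\infty$. Fixing any $r\in(1,\infty)$, the $L^r$-bound of $T$ dominates that of every individual $T_\alpha$, and by Chebyshev's inequality it translates directly into the weak-type T0) bound with parameters $\varrho=\rho=1/r$, providing both R3) uniformly in $\alpha$ and the finiteness of $\ZL_0(T)$ on the maximal side.

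With all the restricted-type conditions in place uniformly, \trm{T5} delivers \e{y30}, \trm{T8} delivers the median-based sparse domination \e{y94}, and \cor{C5} delivers the exponential decay \e{x3}. The only nontrivial external input is the $L^r$-bound for the maximally modulated operator, which is entirely supplied by \cite{Zor, Lie}; the uniform verification of R4) and R5), which would otherwise be the main technical obstacle, has already been packaged into the two preliminary lemmas. Thus the proof reduces to applying those lemmas with $\|g_\alpha\|_\infty=1$ and quoting the machinery developed in the earlier sections.
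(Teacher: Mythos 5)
Your proposal is correct and follows essentially the same route as the paper: the paper also reduces the theorem to verifying R1)--R5) via the two preceding lemmas, invokes the $L^p$-boundedness of the maximal polynomial Carleson operator from \cite{Zor,Lie} to furnish the weak-type condition T0), and then quotes Theorems \ref{T5}, \ref{T8}, and Corollary \ref{C5}. Your write-up simply spells out the (intentionally terse) verification of R1)--R3) that the paper leaves implicit; there is no substantive difference in strategy.
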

\noindent
\textbf{Remarks:}
\begin{enumerate} 
	\item Examples of restricted type $\BO$ operators are Calder\'on-Zygmund classical convolution operators on $\ZR^d$, corresponding to kernels
	\begin{equation}
		K(x)=\frac{\Omega(x)}{|x|^d}
	\end{equation}
where $\Omega(x)$ satisfies
\begin{align*}
	&\Omega(\varepsilon x)=\Omega(x),\, \varepsilon>0,\quad \int_{S^{d-1}}\Omega(x)d\sigma=0,\\
	&\omega(\delta)=\sup_{\stackrel{|x-x'|\le \delta}{|x|=|x'|=1}}|\Omega(x)-\Omega(x')|,\quad \int_0^1\frac{\omega(t)\log(1/t)}{t}dt<\infty.
\end{align*}
(see \cite{Ste1}, chap. 2.4, for precise definition). Particular cases of such operators are Hilbert, Riesz and Beurling-Ahlfors transforms.
\item Significant cases of operators \e{x14} is the maximal partial sums operator of trigonometric Fourier series
\begin{equation}\label{x29}
	S^*f(x)=\sup_n|S_nf(x)|,
\end{equation}
and so it satisfies estimates \e{y94}, \e{x3} and is bounded on $\BMO(\ZT)$.  Hence we can write
\begin{equation}\label{x28}
	|\{x\in I:\, |S^*f(x)-m|>t \MM_\#f(x)\}|\lesssim |I|\exp (-ct).
\end{equation}
for any function $f\in L^r(\ZT)$, $r>1$, and an interval $I\subset \ZT$, where $m\in M_{S^*(f)}(I)$.
\item Inequality \e{x28} is true also for the Walsh maximal partial sums operator too, since each partial sum operator $S_n$ is a restricted type $\BO$ operator based on dyadic interval ball-basis of $[0,1]$. 
	\item Operators \e{x14} satisfy estimates \e{y94}, \e{x3} and are bounded on $\BMO$. Namely, for any $f\in L^r(\ZR^d)\cap \BMO(\ZR^d)$ we have $\|T(f)\|_\BMO\lesssim \|f\|_\BMO$. The fact that ordinary Calderón–Zyg\-mund operators boundedly map $L^\infty$ to $\BMO$ was independently obtained by Peetre \cite{Pee}, Spanne \cite{Spa} and Stein \cite {Ste}. Peetre \cite{Pee} also observed that translation-invariant Calderón–Zygmund operators actually map $\BMO$ to itself.  \cor{C4} and \trm{T1} provide extensions of these result to a wider class of $\BO$ operators. In particular, the maximal operators \e{x29} of both for trigonometric and Walsh systems are bounded on $\BMO$.
\end{enumerate}
\subsection{Martingales}
Let $\zB_n$ be a filtration defined in the introduction and $\zB=\cup_n\zB_n$ be the corresponding ball-basis. Given $f\in L^1(X,\ZU)$ denote the martingale difference sequence
\begin{equation}
	\Delta_Af(x)=\sum_{B:\, \pr(B)=A}\left(\frac{1}{\mu(B)}\int_Bf-\frac{1}{\mu(A)}\int_Af\right)\ZI_B(x)
\end{equation}
 We consider martingale transform and square function operators defined by
 \begin{align}
 	&M_\varepsilon f(x)=\sum_{A\in \zB}\varepsilon_{A}\Delta_A f(x),\quad \varepsilon_A=\pm 1,\label{x11}\\
 		&Sf(x)=\left(\sum_{A\in \zB}\left\|\Delta_A f(x)\right\|_\ZU^2\right)^{1/2}\label{x12}
 \end{align}
respectively.  It was proved in \cite{Kar3} that the real valued version of martingale transform $M_\varepsilon$ is a $\BO$ operator. Without any essential changes in the proof of this result one can obtain the same for the vector valued martingale transform and square function operators. Namely, 
\begin{theorem}\label{T13}
	Vector-valued versions of operators \e{x11} and \e{x12} are $\BO$ operators with parameters $\varrho=\rho=r=1$ and absolute constants $\ZL_0$, $\ZL_1=0$, $\ZL_2$. If the ball-basis (martingal filtration) is doubling, then both operators became restricted type $\BO$ operators.
\end{theorem}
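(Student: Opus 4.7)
The plan is to verify conditions T0), T1), T2) of the $\BO$-definition (with parameters $\varrho=\rho=r=1$) for $M_\varepsilon$ and $S$ separately, following the scalar template of \cite{Kar3} and exploiting throughout the tree structure of the filtration ball-basis $\zB$: any two elements of $\zB$ are either nested or disjoint, and $A^*\in\zB$ for every $A\in\zB$. Condition T0) is the standard weak-$(1,1)$ estimate for the martingale transform and the square function; these go through verbatim in the $\ZU$-valued setting via the Burkholder--Gundy inequality applied to Banach-space-valued martingale differences, yielding an absolute $\ZL_0$. Condition T1) with $\ZL_1=0$ follows from a short case-split: for $g=f\cdot\ZI_{X\setminus B^*}$ and any $C\in\zB$, either $C\cap B=\varnothing$ (so $\Delta_Cg$ vanishes on $B$), or $C\subseteq B^*$ (so $g\equiv0$ on $C$ and on each child, hence $\Delta_Cg\equiv0$), or $C\supsetneq B^*$ (so every $x\in B$ lies in the unique child of $C$ containing $B^*$, on which $\Delta_Cg$ is constant). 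In all cases $\Delta_Cg$ is constant on $B$, whence both $M_\varepsilon g$ and $Sg$ are constant on $B$ and $\OSC_B=0$.

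Condition T2) requires a computation. Given $A\in\zB$ with $A^*\neq X$ I set $B=\pr(A^*)\supsetneq A$ and $g=f\cdot\ZI_{B^*\setminus A^*}$, and list the ancestors of $A^*$ as $A^*=D_0\subsetneq D_1=B\subsetneq\cdots\subsetneq D_k=B^*\subsetneq D_{k+1}\subsetneq\cdots$; only these contribute to $M_\varepsilon g(x)$ or $Sg(x)$ for $x\in A$. For $j>k$ the support of $g$ lies inside the child $D_{j-1}$ of $D_j$, so $\Delta_{D_j}g(x)=(\mu(D_{j-1})^{-1}-\mu(D_j)^{-1})\int g$, and the positive coefficients telescope to give $\sum_{j>k}\|\Delta_{D_j}g(x)\|_\ZU\le\mu(B^*)^{-1}\|\int g\|_\ZU\le\langle f\rangle_{B^*}$. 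For $1\le j\le k$ I decompose
\begin{equation*}
g_{D_{j-1}}-g_{D_j}=\left(\int_{D_{j-1}}g\right)\!\left(\mu(D_{j-1})^{-1}-\mu(D_j)^{-1}\right)-\mu(D_j)^{-1}\int_{D_j\setminus D_{j-1}}g,
\end{equation*}
and bound each piece: Abel summation, using $\int_{D_0}g=0$, reduces the first piece to $\sum_{j=1}^{k-1}\mu(D_j)^{-1}\int_{D_j\setminus D_{j-1}}\|f\|$, while the sets $D_j\setminus D_{j-1}$ ($1\le j\le k$) are disjoint with union $B^*\setminus A^*$. Using $\mu(D_j)\ge\mu(B)$ and $\mu(B^*)\le2\mu(B)$, both pieces are at most $2\langle f\rangle_{B^*}$. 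Thus $\|M_\varepsilon g(x)\|_\ZU\lesssim\langle f\rangle_{B^*}$ with an absolute constant, and because $Sg(x)\le\sum_C\|\Delta_Cg(x)\|_\ZU$ the same bound holds for $S$, so $\ZL_2$ is absolute for both operators.

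When $\zB$ is doubling I pass to the restricted type by verifying R1)--R5): R1) is the hypothesis, R3) is T0), R4) is trivial since $\ZL_1=0$ makes the oscillation vanish (hence it is dominated by any log-weighted mean), and R5) is obtained by taking $B'=B^*$. Indeed, for any $B''\supset B^*$ the splitting $\ZI_{B''}=\ZI_{B^*}+\ZI_{B''\setminus B^*}$ combined with T1) shows the second summand contributes zero oscillation on $B\subset B^*$; the first is handled by repeating the nested/disjoint case analysis for $\ZI_{B^*}$: either $C\subseteq B^*$ forces $\Delta_C\ZI_{B^*}\equiv0$ (since $\ZI_{B^*}\equiv1$ on $C$), or $C\supsetneq B^*$ places all of $B^*$ in the unique child of $C$ containing $B^*$, on which $\Delta_C\ZI_{B^*}$ is constant, so both $M_\varepsilon(\ZI_{B^*})$ and $S(\ZI_{B^*})$ are constant on $B^*$. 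Linearity R2) is automatic for $M_\varepsilon$; for the sublinear $S$ the statement is to be read in the sense of the remark following \trm{T5}. The main obstacles are the vector-valued weak-$(1,1)$ at T0) (which for $M_\varepsilon$ needs the appropriate Banach-space hypothesis on $\ZU$, whereas the $S$-bound is the universal Burkholder--Gundy estimate) and the Abel-summation bookkeeping at T2), where the partition of ancestors into the regime $j>k$ (support of $g$ lies inside a single child) and the regime $j\le k$ (support splits across children) is essential to keep all constants absolute.
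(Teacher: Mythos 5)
The paper offers no proof of Theorem \ref{T13}: it points to \cite{Kar3} for the scalar martingale transform and asserts that the vector-valued and square-function cases follow ``without any essential changes.'' Your proposal correctly supplies the argument the paper declines to spell out. The T1) case split (disjoint from $B$, contained in $B^*$, or strictly containing $B^*$) uses the tree structure of a martingale filtration exactly and yields $\ZL_1=0$; the T2) computation along the ancestor chain of $A^*$ --- telescoping for $j>k$, Abel summation anchored by $\int_{D_0}g=0$ because $g\equiv0$ on $A^*$, and the filtration identity $\mu(B^*)\le 2\mu(B)$ (since for a martingale ball-basis $B^*=[B]$ is the highest ancestor of $B$ of measure $\le 2\mu(B)$) --- produces an absolute $\ZL_2$; and the restricted-type verification R1)--R5), in particular R5) with $B'=B^*$ via the constancy of each $\Delta_C(\ZI_{B^*})$ on $B$, is sound. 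The two caveats you flag are real and are also left unaddressed by the theorem as stated: the weak-$(1,1)$ bound T0) for $M_\varepsilon$ and for $S$ is not Banach-space-free (it requires a hypothesis on $\ZU$ such as UMD, a point the scalar case in \cite{Kar3} never has to face); and $S$ is sublinear rather than linear, so R2) must either be read in the sense of the remark following Theorem \ref{T5}, or handled by regarding $S$ as the composition of the norm $\|\cdot\|_{\ell^2(\ZU)}$ with the linear operator $f\mapsto(\Delta_A f)_A$ taking values in $\ZV=\ell^2(\ZU)$, which is how the abstract framework accommodates it. Modulo these flagged hypotheses your argument is a complete, self-contained adaptation of the scalar template.
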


\textbf{Remarks:}
\begin{enumerate}
	\item If $T$ is one of operators \e{x11} or \e{x12} based on general ball-basis $\zB$, then we have exponential estimate \e{a44}, where $\MM$ is the maximal function corresponding to $\zB$. In \cite{Kar1} the same inequality for \e{x11} was proved in the case of doubling ball-basis $\zB$ and real valued functions.
	\item If the ball-basis is doubling, then $T$ satisfies sparse bound \e{y84} and so exponential inequality \e{x3}. Exponential inequality \e{x3} for martingale transforms gives another extension of the mentioned inequality of \cite{Kar1}, replacing the maximal function by the sharp maximal function.
	\item So in the case of doubling ball-basis we have operators \e{x11} and \e{x12} are bounded on $\BMO(X,\ZU)$.
	\item Using the sparse domination \e{y51} we can write a sharp weighted norm estimate
	\begin{equation}
		\|T(f)\|_{L^p(X,w,\ZV)}\le C[w]_{A_p}\|f\|_{L^p(X,w,\ZU)}.
	\end{equation}
\end{enumerate}
for both operators \e{x11} and \e{x12}, where 
\begin{equation}
	[w]_{A_p}=\sup_{B\in \zB}\left(\frac{1}{\mu(B)}\int_Bw\right)\left(\frac{1}{\mu(B)}\int_Bw^{-1/(p-1)}\right)^{p-1}<\infty.
\end{equation}
is the Muckenhaupt characteristic of the weight $w$.
\subsection{Riesz potentials}
For $0<\alpha<n$ the fractional integral operator or Riesz potential $I_\alpha$ is defined by
\begin{equation}\label{x30}
	I_\alpha f(x)=\int_{\ZR^d}\frac{|f(y)|}{|x-y|^{n-\alpha}}dy.
\end{equation}
The related fractional maximal operator $\MM_\alpha$ is given by
\begin{equation}\label{x32}
	\MM_\alpha f(x)=\sup_{B\ni x}\frac{1}{|B|^{1-\alpha/n}}\int_B|f(y)|dy.
\end{equation}
\begin{theorem}
	Operators \e{x30} and \e{x32} are $\BO$ operators with parameters 
	\begin{equation}\label{x31}
		\varrho=r=1,\quad \rho=1-\alpha/n.
	\end{equation}
\end{theorem}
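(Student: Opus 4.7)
The plan is to verify conditions T0), T1), T2) of the $\BO$ definition separately for $I_\alpha$ and $\MM_\alpha$ with parameters $\varrho=r=1$, $\rho=1-\alpha/n$, under which
\[
\langle f\rangle_B=\frac{1}{|B|^{1-\alpha/n}}\int_B|f|.
\]
For T0), the case of $\MM_\alpha$ requires no work: with these exponents $\MM_\alpha$ coincides with the fractional maximal function $\MM$ of \e{1-1}, so \trm{T1-1} applied to $f\cdot\ZI_B$ delivers the required weak-type bound verbatim. For $I_\alpha$, I would invoke the classical Hardy--Littlewood--Sobolev weak-type inequality $I_\alpha:L^1(\ZR^n)\to L^{n/(n-\alpha),\infty}(\ZR^n)$ applied to $g=f\cdot\ZI_B$; substituting the threshold $t=\lambda\langle f\rangle_B$ and using $n/(n-\alpha)=1/\rho$ reduces the factor $\|g\|_1^{n/(n-\alpha)}$ to $|B|\,\lambda^{-1/\rho}$. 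This step is what pins down the exponent $\rho=1-\alpha/n$.

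For T1) on $I_\alpha$, I would fix $x,x'\in B$ and use the standard Lipschitz bound $|K(x,y)-K(x',y)|\lesssim |x-x'|\,|x_B-y|^{\alpha-n-1}$ on the kernel $K(x,y)=|x-y|^{\alpha-n}$, valid whenever $y$ lies outside the doubled ball $B^*$. Decomposing $\ZR^n\setminus B^*$ into concentric dyadic annuli $B_k\setminus B_{k-1}$ with $B_k$ the dilate of $B$ of radius $\sim 2^k r_B$, and recognising $(2^k r_B)^{\alpha-n}\int_{B_k}|f|$ as a constant multiple of $\langle f\rangle_{B_k}$, yields
\[
\OSC_B\!\bigl(I_\alpha(f\cdot\ZI_{\ZR^n\setminus B^*})\bigr)\lesssim \sum_{k\ge 0} 2^{-k}\langle f\rangle_{B_k}\lesssim \langle f\rangle^*_B,
\]
since every $B_k\supset B$. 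For T1) on $\MM_\alpha$ the argument is purely geometric. Any ball $C\ni x\in B$ that contributes non-trivially to $\MM_\alpha(f\cdot\ZI_{\ZR^n\setminus B^*})(x)$ must satisfy $C\not\subset B^*$, and the two-balls relation of Section~\ref{S2} then forces $|C|>2|B|$, in particular $r_C\gtrsim r_B$. A concentric dilate $\tilde C$ of $B$ of radius a fixed multiple of $r_C$ therefore contains both $B$ and $C$ with $|\tilde C|\lesssim|C|$, so
\[
\frac{1}{|C|^{1-\alpha/n}}\int_{C}|f\cdot\ZI_{\ZR^n\setminus B^*}|\lesssim \langle f\rangle_{\tilde C}\le \langle f\rangle^*_B.
\]
Taking the supremum over $C$ bounds $\MM_\alpha(f\cdot\ZI_{\ZR^n\setminus B^*})$ pointwise on $B$ by a multiple of $\langle f\rangle^*_B$, which dominates the oscillation too.

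For T2) I would appeal to the last theorem of Section~\ref{S4}: the Euclidean ball-basis on $\ZR^n$ is doubling, so T0) and T1) already imply T2) with $\ZL_2\lesssim\ZL_0+\ZL_1$, and no separate argument is needed. If one prefers to construct $B$ by hand, the choice of $B$ as a concentric double of $A$ together with $|x-y|\gtrsim r_A$ on $B^*\setminus A^*$ extracts the factor $|B|^{1-\alpha/n}$ that converts the crude $L^1$ integral $\int_{B^*}|f|$ into $\langle f\rangle_{B^*}$. The only step that is not a direct consequence of the paper's own preparatory machinery is the dyadic estimate in T1) for $I_\alpha$, and this is where I expect the (still routine) technical work to lie; everything else is either already packaged in \trm{T1-1} and Section~\ref{S4}, or is an immediate consequence of the two-balls relation.
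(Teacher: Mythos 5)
Your proposal is correct and follows essentially the same route as the paper's proof: T0) for $I_\alpha$ via the Hardy--Littlewood--Sobolev weak $(1,n/(n-\alpha))$ bound, T1) for $I_\alpha$ via the kernel Lipschitz estimate $|K(x,y)-K(x',y)|\lesssim R\,|x-y|^{\alpha-n-1}$ summed over dyadic annuli, and T2) for free from the doubling-implies-T2 theorem at the end of Section~\ref{S4}. Where the paper dispatches $\MM_\alpha$ by citing \cite{Kar3} for $\alpha=0$ and asserting the general case is similar, you supply the direct argument (identifying $\MM_\alpha$ with $\MM$ of \e{1-1} for T0), and using the two-balls relation to force $|C|>2|B|$ for any contributing ball $C$ in T1)), which fills in what the paper leaves implicit but changes nothing of substance.
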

\begin{proof}
	The case of $\alpha=0$ for the maximal operator $\MM_\alpha$ was proved in \cite{Kar3} and the general case may be proved similarly. So let us consider the operator \e{x30}. We need to prove only conditions T0) and T1), since the Euclidean ball-basis is doubling. It is well-known that $I_\alpha$ is of weak $(1,q)$ type with $q$ satisfying $q^{-1}=\rho= 1-\alpha/n$. This implies T0) condition with parameters \e{x31}. Then suppose $B=B(x_0,R)$ and $x,x'\in B$, $y\in B^*$, are arbitrary points. One can check that
	\begin{equation}
		\left|\frac{1}{|x-y|^{n-\alpha}}-\frac{1}{|x'-y|^{n-\alpha}}\right|\lesssim \frac{R}{|x-y|^{n+1-\alpha}}.
	\end{equation}
Thus for $f\in L^1(\ZR^d)$ we obtain
\begin{equation}
	|I_\alpha f(x)-I_\alpha f(x')|\lesssim \sum_{k=0}^\infty \frac{1}{2^k}\cdot \frac{1}{|B(x_0,2^kR)|^\rho}\int_{B(x_0,2^kR)}|f(y)|dy\lesssim \langle f\rangle_B^*,
\end{equation}
which implies T1) condition.
\end{proof}
Weighted inequalities for operators \e{x30} and \e{x32} and more general potential operators have been studied in depth. See e.g. the works of Muckenhoupt and Wheeden \cite{Muck1}, Sawyer \cite{Saw1, Saw2}, Gabidzashvili and Kokilashvili \cite{Kok}, Sawyer and Wheeden \cite{Saw3}, and P\'{e}rez \cite{Per2,Per3}. Classical results of \cite{Muck1} assert that operators \e{x30}  and \e {x32} boundedly map $L^q(w^q)$ into $L^p(w^p)$ if and only if a weight $w$ satisfies 
\begin{equation}\label{x18}
	[w]_{A_{p,q}}=\sup_{B\in \ZB}\left(\frac{1}{\mu(B)}\int_Bw^q\right)\left(\frac{1}{\mu(B)}\int_Bw^{-p'}\right)^{q/p'}<\infty.
\end{equation}
The following sharp weighted estimates were proved by Lacey, Moem, P\'{e}rez and Torres in \cite{LaPe}:
\begin{align}
	&	\|I_\alpha(f)\|_{L^q(w^q)\to L^p(w^p)}\lesssim [w]_{A_{p,q}}^{(1-\alpha/n)\max\{1,p'/q\}},\label{x33}\\
	&	\|\MM_\alpha(f)\|_{L^q(w^q)\to L^p(w^p)}\lesssim [w]_{A_{p,q}}^{\frac{p'}{q}(1-\alpha/n)},
\end{align}
where parameters $q>p>1$ satisfy
\begin{equation}
	p^{-1}-q^{-1}=\frac{\alpha}{d}.
\end{equation}
Applying \trm{T7}, we obtain the following sparse domination inequality.
\begin{corollary}
	For any function $f\in L^1(\ZR^d)$ with $\supp f\subset B\in\ZB$ there is a sparse family of balls $\ZS$ such that
	\begin{equation}\label{x35}
		|I_\alpha f(x)|\lesssim C(\alpha)\sum_{G\in \ZS}\left\langle f\right\rangle_{G}\ZI_{G}(x),\text { a.e. } x\in B,
	\end{equation}
where 
\begin{equation}
	\langle f\rangle_G=\frac{1}{|G|^\rho}\int_G|f|,\quad \rho=1-\alpha/d.
\end{equation}
\end{corollary}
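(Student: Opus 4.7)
The plan is to apply Theorem \ref{T7} directly to the Riesz potential $I_\alpha$, using the fact (just established in the preceding theorem of the paper) that $I_\alpha$ is a $\BO$ operator on $(\ZR^d,dx)$ with the Euclidean ball-basis and parameters $r=1$, $\varrho=1$, $\rho=1-\alpha/d$. Because these are precisely the parameters appearing in the definition of $\langle f\rangle_G$ stated in the corollary, the averaging functional of Theorem \ref{T7} coincides with the one the corollary asks for, so no translation between different definitions is needed.

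First, I would fix $f\in L^1(\ZR^d)$ with $\supp f\subset B$ and note that the Banach spaces are $\ZU=\ZV=\ZR$, so $\|f\|_\ZU=|f|$ and $\|I_\alpha f(x)\|_\ZV = |I_\alpha f(x)|$. Applying Theorem \ref{T7} to $T=I_\alpha$ yields a ball $B'\supset B$, a $\gamma$-sparse family $\ZA$ of balls contained in $B'$, and a martingale family $\bar\ZA\Subset\ZA$ such that
\begin{equation*}
|I_\alpha f(x)|\lesssim C(I_\alpha)\sum_{G\in\ZA}\langle f\rangle_G\,\ZI_{\bar G}(x),\quad \text{a.e. } x\in B,
\end{equation*}
where $C(I_\alpha)=\ZL_0(I_\alpha)+\ZL_1(I_\alpha)+\ZL_2(I_\alpha)$ is an admissible constant depending only on $\alpha$ and $d$ (call it $C(\alpha)$). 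Since each $\bar G\subset G$, we trivially have $\ZI_{\bar G}(x)\le \ZI_G(x)$, which upgrades the bound to the form claimed in the corollary, with the sparse family $\ZS:=\ZA$.

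I would then verify that the averaging functional matches: with $r=\varrho=1$ and $\rho=1-\alpha/d$, the definition \eqref{y56} gives
\begin{equation*}
\langle f\rangle_G=\frac{1}{|G|^{1-\alpha/d}}\int_G|f|,
\end{equation*}
which is exactly the expression in the statement. There is nothing more to prove.

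There is essentially no obstacle here, since the hard work has already been carried out: the structural Proposition \ref{P11} and the sparse-domination Theorem \ref{T7} were designed to allow arbitrary admissible parameters $r\ge 0$ and $\varrho\ge\rho>0$, and the preceding theorem of this subsection verified that $I_\alpha$ satisfies T0) (via its weak $(1,q)$-bound with $q^{-1}=1-\alpha/d$) and T1) (via the standard kernel smoothness estimate), with T2) following automatically because the Euclidean ball-basis is doubling. The only thing to be careful about is to make clear that the parameter $\rho=1-\alpha/d$ chosen inside Theorem \ref{T7} is the same one that enters the definition of $\langle f\rangle_G$ in the corollary, so that no rescaling or reinterpretation of the sparse sum is required.
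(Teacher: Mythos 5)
Your proof is correct and follows precisely the route intended by the paper: the preceding theorem establishes that $I_\alpha$ is a $\BO$ operator with parameters $r=\varrho=1$, $\rho=1-\alpha/d$, Theorem \ref{T7} then gives the domination with $\ZI_{\bar G}$, and the trivial estimate $\ZI_{\bar G}\le\ZI_G$ yields the stated form. The observation that the averaging functional in \eqref{y56} with these parameters is exactly the $\langle f\rangle_G$ of the corollary is the only check needed, and you made it explicitly.
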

Consider an abstract measure space $(X,\mu)$ with a ball-basis $\ZB$ satisfying the Besicovitch condition.
\begin{definition}
	Let $\ZB$ be a family of sets of an arbitrary set $X$. We say a family of balls $\ZB$ satisfies the Besicovitch $D$-condition with a constant $D\in \ZN$, if for any collection $\ZG\subset \ZB$ one can find a subscollection $\ZG'\subset \ZG$ such that
	\begin{align*}
		&\bigcup_{G\in \ZG}G= \bigcup_{G\in \ZG'}G,\\
		&\sum_{ G\in \ZG'}\ZI_G(x)\le D.
	\end{align*}
	We say $\ZB$ is martingale system if $D=1$. 
\end{definition}
Consider a sparse operator
\begin{align}
	&\ZA_\ZS f(x)=\sum_{B\in \ZS}\langle f\rangle_B\cdot \ZI_B(x),\quad f\in L^1_{loc}(X)=L^1_{loc}(X,\ZR)\label{x16}\\
	&\text{ where }\langle f\rangle_B=\frac{1}{\mu(B)^\rho}\int_B|f|,\quad 0<\rho\le 1,
\end{align}
corresponding to a sparse collection of balls $\ZS\subset \ZB$. The method used in \cite{LaPe} can be applied to prove weighted estimate \e{x33} for sparse operators. Namely, we can state the following.
\begin{theorem}
	If a ball-basis $\ZB$ satisfies Besicovitch's condition and $w$ is an $A_{p,q}$ weight with parameters parameters $q>p>1$ satisfying 
	\begin{equation}
		p^{-1}-q^{-1}=1-\rho,
	\end{equation} then the sparse operator \e{x16} satisfies the sharp bound
	\begin{equation}\label{x34}
		\|\ZA\|_{L^q(w^q)\to L^p(w^p)}\lesssim [w]_{A_{p,q}}^{\rho \max\{1,p'/q\}}.
	\end{equation}
\end{theorem}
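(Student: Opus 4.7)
The plan is to follow the sparse-operator approach of Lacey--Moen--P\'erez--Torres \cite{LaPe}, adapting it from the Euclidean dyadic framework to the abstract ball-basis setting, where the Besicovitch $D$-condition replaces the nestedness of dyadic cubes. First, by the standard duality for weighted Lebesgue spaces, the target operator bound is equivalent to the bilinear estimate
\begin{equation*}
\Lambda(f,h):=\sum_{B\in\ZS}\mu(B)^{-\rho}\left(\int_B f\,d\mu\right)\left(\int_B h\,d\mu\right)\lesssim C\,\|f\|_{L^p(\sigma)}\,\|h\|_{L^{q'}(u)}
\end{equation*}
for nonnegative $f,h$, where $u:=w^q$ and $\sigma:=w^{-p'}$ are the dual weights. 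The algebraic identity $p^{-1}-q^{-1}=1-\rho$ forces $1/q+1/p'=\rho$, so the $A_{p,q}$ condition rewrites as
\begin{equation*}
u(B)^{1/q}\sigma(B)^{1/p'}\le[w]_{A_{p,q}}^{1/q}\mu(B)^{\rho},
\end{equation*}
which is the substitute for the factor $\mu(B)^{-\rho}$ appearing in $\Lambda$.

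Second, I would invoke sparseness by fixing disjoint $E_B\subset B$ with $\mu(E_B)\ge\gamma\mu(B)$, and rewrite each $\int_B f\,d\mu=\sigma(B)\langle f\sigma^{-1}\rangle_{B,\sigma}$ (and analogously for $h$). Combining the $A_{p,q}$ identity above with H\"older's inequality in the discrete index set $\ZS$ reduces $\Lambda$ to a product of weighted dyadic maximal-type integrals, each of which is controlled by a weighted Carleson embedding theorem and the boundedness of the sparse maximal function $M^{\ZS}_\sigma$ on $L^p(\sigma)$. The Besicovitch $D$-condition plays a key role here: since it guarantees that any covering by balls of $\ZS$ admits a refinement with overlap bounded by $D$, the weight-free $L^p(\sigma)\to L^p(\sigma)$ estimate for $M^{\ZS}_\sigma$ holds with constants independent of $\sigma$; this substitutes for the dyadic maximal theorem used in the classical Euclidean proof.

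Third, the final summation splits into two regimes. When $p'\le q$, a single H\"older application in the bilinear sum, combined with the two weighted maximal bounds above, yields the estimate with exponent exactly $\rho$ on $[w]_{A_{p,q}}$. When $p'>q$, the direct H\"older step is wasteful, and one must follow \cite{LaPe} in performing a principal-ball stopping-time construction: extract a subfamily $\ZP\subset\ZS$ along which the $u$-weighted averages of $h$ double, organize the balls of $\ZS$ into trees rooted at elements of $\ZP$, and sum $\Lambda$ tree-by-tree, exploiting the geometric decay of the principal averages to obtain a convergent series. This produces an additional factor $[w]_{A_{p,q}}^{\rho(p'/q-1)}$, raising the total exponent to the sharp value $\rho\cdot p'/q$.

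The main obstacle is executing the principal-ball construction in the abstract ball-basis setting, since the classical argument relies on the strict nestedness of dyadic cubes to define trees and to run the stopping time. However, the Besicovitch $D$-condition is sufficient for this purpose: given any subfamily of $\ZS$, it produces a bounded-overlap sub-subfamily covering the same union, and by partitioning $\ZS$ into $O(D)$ martingale-type sub-families (pairwise nested or disjoint) one can run the stopping-time argument separately on each piece and then sum. Combining the two regimes yields the sharp exponent $\rho\max\{1,p'/q\}$ with an admissible constant depending only on $D$, $\gamma$, $\rho$, $p$, and $q$, completing the proof.
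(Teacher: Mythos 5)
The paper does not actually prove this theorem. Immediately after stating it, the author writes that ``the suggested proof of \e{x34} uses the same deep method of paper \cite{LaPe}. So it is not consistent to state the proof of \e{x34} in the present paper. Anyway the inequality \e{x34} is itself interesting, probably we will do it later in another note.'' The only comment on the structure of the argument is that it should follow Lacey--Moen--P\'erez--Torres \cite{LaPe}, and that the simpler method of \cite{Kar3} for $\rho=1$ does not extend. So there is no in-paper argument for you to match; your sketch is consistent with what the author gestures at, but it is not the paper's proof.

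As a proof proposal in its own right, your outline reproduces the standard \cite{LaPe} scheme (dualize, use sparseness and $A_{p,q}$ to trade $\mu(B)^{-\rho}$ for $u,\sigma$ averages, split into the regimes $p'\le q$ and $p'>q$, run a principal-ball stopping time in the second case), and that much is fine. The genuine gap is the step where you claim that ``by partitioning $\ZS$ into $O(D)$ martingale-type sub-families (pairwise nested or disjoint) one can run the stopping-time argument separately on each piece.'' The paper's Besicovitch $D$-condition asserts only that any family of balls admits a \emph{sub}cover of the same union with pointwise overlap at most $D$; it says nothing about the incidence structure of the original sparse family $\ZS$, and in particular does not produce a forest/tree decomposition of $\ZS$. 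The principal-ball construction in \cite{LaPe} leans essentially on the nesting of dyadic cubes (every two either coincide, are disjoint, or one contains the other), which is what makes the stopping-time trees well-defined and the summation tree-by-tree convergent. Replacing that by bounded overlap after refinement is a substantively different hypothesis, and you have not shown that $\ZS$ can actually be reorganized into $O(D)$ martingale families, nor that the weighted Carleson embedding and the $L^p(\sigma)$ bound for $M^{\ZS}_\sigma$ survive the reorganization. This is exactly the difficulty the author is flagging by deferring the proof; your sketch acknowledges it as ``the main obstacle'' but does not resolve it.
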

Having \e{x35} and \e{x34}, clearly we can obtain the weighted estimate \e{x33} of \cite{LaPe}. Such a conclusion could be interesting if we had a straightforward simple proof of \e{x34}.  In fact, the suggested proof of \e{x34} uses the same deep method of paper \cite{LaPe}. So it is not consistent to state the proof of \e{x34} in the present paper. Anyway the inequality \e{x34} is itself interesting, probably we will do it later in another note. Note that a straightforward proof of \e{x34} in the case of $\rho=1$ is known from \cite{Kar3}. To the best of our knowledge the method of \cite{Kar3} is not applicable to the general case. 
\section{Proofs of \trm{T2} and corollaries}
\begin{lemma}\label{LK1}
	If $A\subset B$ are balls and $x\notin B^*$, then $d(x,A)\ge\mu(B)$.
\end{lemma}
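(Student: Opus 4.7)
The plan is to argue by contradiction using the two-balls relation recorded at the start of Section 2 (the consequence of B4) which says: if $C\cap B\ne\varnothing$ and $\mu(C)\le 2\mu(B)$, then $C\subset B^*$).

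Fix an arbitrary ball $C\in\ZB$ with $C\supset A\cup\{x\}$; to establish $d(x,A)\ge\mu(B)$, it suffices to show $\mu(C)\ge\mu(B)$. Suppose for contradiction that $\mu(C)<\mu(B)$. Since $A$ is a ball we have $\mu(A)>0$ by B1), and since $A\subset C$ and $A\subset B$, we get $C\cap B\supset A$, so in particular $C\cap B\ne\varnothing$. Now $\mu(C)<\mu(B)\le 2\mu(B)$, so the two-balls relation applies and yields $C\subset B^*$. But $x\in C$ and $x\notin B^*$, which is the desired contradiction.

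Thus every admissible $C$ in the infimum defining $d(x,A)$ satisfies $\mu(C)\ge\mu(B)$, giving $d(x,A)\ge\mu(B)$. The argument is entirely elementary; there is no real obstacle beyond correctly invoking the two-balls relation, which is the whole content of the lemma.
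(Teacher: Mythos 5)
Your proof is correct and follows essentially the same route as the paper: both argue by contradiction, take a ball containing $A\cup\{x\}$ of measure less than $\mu(B)$, and invoke the two-balls relation (via $A\subset C\cap B$) to place it inside $B^*$, contradicting $x\notin B^*$. You simply spell out the invocation of the two-balls relation a bit more explicitly than the paper does.
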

\begin{proof}
	Suppose to the contrary that $d(x,A)< \mu(B)$, which means there is a ball $A'\supset A$ such that $x\in A'$ and $\mu(A')<\mu(B)$. Then we get $A'\subset B^*$, which is in contradiction with $x\notin B^*$.
\end{proof}
\begin{lemma}\label{LK3}
	For any ball $B\in \ZB$ and a function $f\in L^r(X,\ZU)$ it holds the inequality
	\begin{equation}\label{a12}
		\langle f\rangle^*_{\#,B}\le\INF_{B}(\MM_{\#}(f))\lesssim \langle f\rangle^*_{\#,B}.
	\end{equation}
\end{lemma}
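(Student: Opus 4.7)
The first inequality is essentially tautological: for any $x \in B$ and any ball $A \supset B$ we have $x \in A$, whence $\MM_\# f(x) \ge \langle f\rangle_{\#,A}$ from the very definition of the sharp maximal function. Taking the essential infimum over $x \in B$ and then the supremum over $A \supset B$ yields $\langle f\rangle_{\#, B}^* \le \INF_B(\MM_\# f)$ in a single line.

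The upper bound is the substantive half, and my plan is to fix $x \in B$ and split an arbitrary ball $C \ni x$ according to whether $\mu(C) \ge \mu(B)$ or $\mu(C) < \mu(B)$. The large-ball case is harmless: the two balls relation forces $B \subset C^*$, so the hull $[C]$ contains both $C$ and $B$ with $\mu([C]) \le \ZK \mu(C)$, and a routine Minkowski-plus-Jensen argument together with \lem{L6} applied to the pair $C \subset [C]$ yields $\langle f\rangle_{\#, C} \lesssim \langle f\rangle_{\#,[C]} \le \langle f\rangle_{\#,B}^*$ with an admissible constant.

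The delicate case is $\mu(C) < \mu(B)$, in which the two balls relation only gives $C \subset B^*$. Here I would introduce the auxiliary function $h = \|f - f_{B^*}\|_\ZU^r \ZI_{B^*}$; then the inequality $\|f - f_C\|_\ZU^r \le 2^{r-1}(\|f - f_{B^*}\|_\ZU^r + \|f_{B^*} - f_C\|_\ZU^r)$ combined with Jensen applied to $f_C - f_{B^*} = \mu(C)^{-1}\int_C(f-f_{B^*})$ produces $\langle f\rangle_{\#, C}^r \le 2^r \mu(C)^{-1}\int_C h \le 2^r \MM h(x)$, where $\MM$ is the Hardy--Littlewood maximal function associated to $\ZB$ (the $\varrho = \rho = 1$ case of \trm{T1-1}). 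Combining both cases, $\MM_\# f(x) \lesssim \langle f\rangle_{\#, B}^* + (\MM h(x))^{1/r}$ for every $x \in B$.

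To finish I would invoke the weak-$(1,1)$ bound from \trm{T1-1}: $\mu\{\MM h > t\} \lesssim t^{-1}\|h\|_1$, with $\|h\|_1 \le \mu(B^*)\langle f\rangle_{\#, B^*}^r \le \ZK \mu(B)(\langle f\rangle_{\#, B}^*)^r$ since $B^* \supset B$. Choosing $t = c(\langle f\rangle_{\#, B}^*)^r$ for an admissible constant $c$ sufficiently large makes the exceptional set of measure strictly less than $\mu(B)$, so a subset $E \subset B$ of positive measure satisfies $\MM h(x) \le c(\langle f\rangle_{\#, B}^*)^r$, hence $\MM_\# f(x) \lesssim \langle f\rangle_{\#, B}^*$ on $E$, and the desired bound on the essential infimum follows. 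The main obstacle is precisely the small-ball case: no deterministic pointwise majorization $\MM_\# f(x) \lesssim \langle f\rangle_{\#, B}^*$ holds on all of $B$, which is exactly why the lemma is stated in terms of $\INF_B$ and why the weak-type inequality is indispensable for passing from the maximal-function bound to a lower-bound on a positive-measure portion of $B$.
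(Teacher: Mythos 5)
Your argument is correct, and it is structured somewhat differently from the paper's. The paper proves the right-hand inequality by a covering-and-packing scheme: it sets $\lambda=\INF_B(\MM_\# f)/2$, selects for each $x\in B$ a ball $B(x)\ni x$ with $\langle f\rangle_{\#,B(x)}>\lambda$, extracts a disjoint subfamily $\{B_k\}$ with $\bigcup_k B_k^*\supset B$ via Lemma \ref{L1-1}, and then either a single large $B_k$ gives the bound or a summation over the disjoint small $B_k\subset B^*$ (using the near-minimizing property of $f_{B_k}$, i.e.\ the analogue of \eqref{a33}) forces $\langle f\rangle_{\#,B^*}\gtrsim\lambda$. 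Your route instead bounds $\MM_\# f(x)$ pointwise on $B$ by $\langle f\rangle_{\#,B}^*+(\MM h(x))^{1/r}$ with $h=\|f-f_{B^*}\|_\ZU^r\ZI_{B^*}$, splitting large and small balls $C\ni x$ by comparison of $\mu(C)$ with $\mu(B)$, and then invokes the already-proved weak-$(1,1)$ bound of Theorem \ref{T1-1} to find a positive-measure subset of $B$ on which $\MM h\lesssim(\langle f\rangle_{\#,B}^*)^r$. In effect you outsource the Vitali step to Theorem \ref{T1-1} rather than running it afresh, which is cleaner. Both proofs are correct; yours is a bit more modular. One small notational point to tidy: since $B^*$ need not itself be a ball, the step $\|h\|_1=\mu(B^*)\langle f\rangle_{\#,B^*}^r\le\ZK\mu(B)(\langle f\rangle_{\#,B}^*)^r$ should be routed through the hull $[B]\supset B^*$ (compare $f$ with $f_{[B]}$ via the $2\langle f-c\rangle_B$ device and enlarge the domain of integration to $[B]$); this costs only admissible constants and does not affect the conclusion.
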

\begin{proof}
	The proof of the left hand side of the inequality is straightforward. To prove the right hand side inequality, we will need the following standard inequalities
	\begin{align}
		&\langle f\rangle_{\#,B}\le \langle f-c\rangle_B+\|f_B-c\|_\ZU\le 2\langle f-c\rangle_B,\quad c\in \ZU,\label{a33}\\
		&\langle f\rangle_{\#,B}\le 2\langle f-f_{B^*}\rangle_{B}\le 2\left(\frac{1}{\mu(B)}\int_{B^*}\|f-f_{B^*}\|_\ZU^r\right)^{1/r}\lesssim \langle f\rangle_{\#,B^*},\label{a35}
	\end{align}
	where $f\in L^r(X,\ZU)$ and $B$ is an arbitrary ball. For any $x\in B$ there exists a ball $B(x)\ni x$ such that
	\begin{equation}\label{a36}
		\langle f\rangle_{\#,B(x)}>\MM_{\#}f(x)/2\ge \INF_{B}(\MM_{\#}(f))/2=\lambda.
	\end{equation}
	Applying \lem{L1-1}, we find a sequence of pairwise disjoint balls $\{B_k\}\subset \{B(x):\, x\in B\}$ such that $\cup_kB_k^*\supset B$. If some $B_k$ satisfies $\mu(B_k)>\mu(B)$, then we have $B\subset B_k^*$ and, using \e{a35}, we get
	\begin{align*}
		\langle f\rangle^*_{\#,B}\ge \langle f\rangle_{\#,B_k^*}&\gtrsim\langle f\rangle_{\#,B_k}>\lambda.
	\end{align*}
	If $\mu(B_k)\le \mu(B)$ for every $k$, then  $\cup_kB_k\subset B^*$. Therefore by \e{a33}, \e{a36} and the pairwise disjointness of $B_k$ we obtain
	\begin{align*}
		\langle f\rangle^*_{\#,B}&\ge \langle f\rangle_{\#,B^*}\ge \left(\frac{1}{\mu(B^*)}\sum_k\int_{B_k}\|f-f_{B^*}\|_\ZU^r\right)^{1/r}\\
		&\gtrsim \left(\frac{1}{\mu(B^*)}\sum_k\int_{B_k}\|f-f_{B_k}\|_\ZU^r\right)^{1/r}\\
		&=\left(\frac{1}{\mu(B^*)}\sum_k\mu(B_k)(\langle f\rangle_{\#,B_k})^r\right)^{1/r}\\
		&\ge \lambda\left(\frac{1}{\mu(B^*)}\sum_k\mu(B_k)\right)^{1/r}\\
		&\gtrsim\lambda\left(\frac{1}{\mu(B)}\sum_k\mu(B_k^*)\right)^{1/r}\ge \lambda,
	\end{align*}
	which gives the right inequality in \e{a12}.
\end{proof}
\begin{lemma}\label{LK2}
	Let $\omega$ satisfy \e{y7} and a family $\phi=\{\phi_B:\, B\in \ZB\}$ be $\omega$-regular. Then for any function $f\in L^r(X,\ZU)$ and balls $A\subset B$ we have
	\begin{equation}\label{y2}
		\int_{X}\|f-f_{\phi_A}\|_\ZU\phi_B\lesssim (\|\omega\|+\log(\mu(B)/\mu(A)) )\cdot \langle f\rangle_{\#,A}^*.
	\end{equation}
\end{lemma}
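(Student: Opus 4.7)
Since $\|\phi_B\|_{L^1(X)} = 1$, the triangle inequality yields
\begin{equation*}
\int_X\|f-f_{\phi_A}\|_\ZU\phi_B \le \int_X\|f-f_B\|_\ZU\phi_B + \|f_B - f_{\phi_A}\|_\ZU,
\end{equation*}
so it suffices to control each summand. I will extract the $\|\omega\|$ factor from the first integral via the pointwise decay of $\phi_B$ away from $B$, and extract the factor $\log(\mu(B)/\mu(A))$ from the correction term by telescoping through a doubling chain.

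For the integral term, I would apply \lem{L2} to build a sequence $B=G_0\subsetneq G_1\subsetneq\ldots$ with $\mu(G_{k+1})\sim 2\mu(G_k)$, so $\mu(G_k)\sim 2^k\mu(B)$, and decompose $X=B^*\sqcup\bigsqcup_{k\ge 0}(G_{k+1}^*\setminus G_k^*)$. On $B^*$ one has $x,B\subset[B]$, giving $d(x,B)\sim\mu(B)$, so property~(3) of $\omega$-regularity yields $\phi_B\lesssim 1/\mu(B)$; the resulting contribution is $\lesssim\langle f\rangle_{\#,A}^*$ after replacing $f_B$ by $f_{B^*}$ and using \lem{L6}. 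On the annulus, \lem{LK1} (with $A=B\subset G_k$) gives $d(x,B)\ge\mu(G_k)$, while $x,B\subset[G_{k+1}]$ gives $d(x,B)\lesssim\mu(G_{k+1})$, and monotonicity of $\omega$ produces
\begin{equation*}
\phi_B(x)\lesssim\frac{\omega(2^{-k})}{2^k\mu(B)}.
\end{equation*}
After a triangle step through $f_{G_{k+1}}$, together with \lem{L10} to bound $\|f_{G_{k+1}}-f_B\|_\ZU\lesssim k\langle f\rangle_{\#,A}^*$, each annular piece is controlled by $\omega(2^{-k})(1+k)\langle f\rangle_{\#,A}^*$. A dyadic-to-integral comparison with \e{y7} yields $\sum_{k\ge 0}\omega(2^{-k})(1+k)\lesssim\|\omega\|$.

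For the correction term, I would write $\|f_B-f_{\phi_A}\|_\ZU\le\|f_B-f_A\|_\ZU+\|f_A-f_{\phi_A}\|_\ZU$. The first piece is bounded by $\log(\mu(B)/\mu(A))\langle f\rangle_{\#,A}^*$ directly from \lem{L10}. For the second, using $\|\phi_A\|_{L^1}=1$,
\begin{equation*}
\|f_A-f_{\phi_A}\|_\ZU=\Big\|\int_X(f_A-f)\phi_A\Big\|_\ZU\le\int_X\|f-f_A\|_\ZU\phi_A,
\end{equation*}
and this integral has exactly the form of the integral handled above with $B$ replaced by $A$, so it is $\lesssim\|\omega\|\langle f\rangle_{\#,A}^*$. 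Combining the three contributions gives \e{y2}.

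The main obstacle is pinning down the annular decay $\phi_B\lesssim\omega(2^{-k})/(2^k\mu(B))$: one needs both the lower bound $d(x,B)\ge\mu(G_k)$ from \lem{LK1} and the matching upper bound $d(x,B)\lesssim\mu(G_{k+1})$ via the hull-ball $[G_{k+1}]$, since any weaker estimate would lose the factor $\omega(2^{-k})$ that is essential for the log-Dini summability in \e{y7}. Once both two-sided bounds are secured, everything else is standard manipulation (H\"older, telescoping along a doubling chain via \lem{L10}, and the dyadic/integral comparison for $\omega$).
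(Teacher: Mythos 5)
Your proof is correct and follows essentially the same route as the paper: build a doubling chain of balls out of $B$, split $X$ into $B^*$ and dyadic annuli, use $\omega$-regularity and \lem{LK1} to get the annular decay of $\phi_B$, telescope averages via \lem{L10}, and finish by applying the same integral estimate with $B=A$ to bound $\|f_A-f_{\phi_A}\|_\ZU$; the only cosmetic difference is that you triangle through $f_B$ while the paper triangles through $f_A$. One small inaccuracy in your closing remark: the matching upper bound $d(x,B)\lesssim\mu(G_{k+1})$ is not actually needed, since the bound \e{y4} is monotone decreasing in $d(x,B)$ (both $\omega(\mu(B)/d(x,B))$ and $1/d(x,B)$ shrink as $d(x,B)$ grows), so the lower bound $d(x,B)\ge\mu(G_k)$ from \lem{LK1} alone already gives $\phi_B(x)\lesssim\omega(2^{-k})/(2^k\mu(B))$ on the annulus.
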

\begin{proof}
	Applying the doubling condition we find a sequence of balls
	\begin{align}
		&B=B_0\subset B_1\subset \ldots\subset B_n\subset \ldots, \\ 
		&2\mu(B_{n-1})\le \mu(B_{n})\le \eta \mu(B_{n-1}).
	\end{align}
	For $x\in B_{n}^*\setminus B_{n-1}^*$ by \lem{LK1} we have 
	\begin{equation}
		d(x,B)\ge  \mu(B_{n-1}^*)\gtrsim \mu(B_n^*)\gtrsim 2^n\mu(B).
	\end{equation} 
	Thus, using \e{y4}, we obtain
	\begin{equation}
		\phi_B(x)\lesssim \frac{\ZI_{B^*}(x)}{\mu(B^*)}+\sum_{n=1}^\infty\frac{\omega(2^{-n})}{\mu(B_n^*)}\ZI_{B_{n}^*\setminus B_{n-1}^*}(x).
	\end{equation}
	Then by \lem{L10} we have 
	\begin{align}
		\|f_{B_{n}^*}-f_A\|_\ZU&\le \|f_{B_{n}^*}-f_B\|_\ZU+\|f_{B}-f_A\|_\ZU\\
		&\lesssim (n+\log(\mu(B)/\mu(A)) )\langle f\rangle_{\#,A}^*. 
	\end{align}
	Thus we obtain
	\begin{align}
		\int_{X}\|f-f_A\|_\ZU\phi_B&\lesssim \frac{1}{\mu(B^*)}\int_{B^*}\|f-f_A\|_\ZU+\sum_{n=1}^\infty \frac{\omega(2^{-n})}{\mu(B_n^*)}\int_{B_{n}^*\setminus B_{n-1}^*}\|f-f_A\|_\ZU\\
		&\le\frac{1}{\mu(B^*)}\int_{B^*}\|f-f_{B^*}\|_\ZU+\sum_{n=1}^\infty \frac{\omega(2^{-n})}{\mu(B_{n}^*)}\int_{B_{n}^*}\|f-f_{B_{n}^*}\|_\ZU\\
		&\qquad+\|f_{B^*}-f_A\|_\ZU+\sum_{n=1}^\infty \omega(2^{-n})\|f_{B_{n}^*}-f_A\|_\ZU\\
		&\lesssim (\|\omega\|+\log(\mu(B)/\mu(A)))\cdot  \langle f\rangle_{\#,A}^*.\label{k5}
	\end{align}
	Applying this inequality with $B=A$ we get
	\begin{equation}\label{k6}
		\|f_{\phi_A}-f_A\|_\ZU=\left\|\int_{X}(f-f_A)\phi_A\right\|_\ZU\lesssim  \|\omega\|\cdot \langle f\rangle_{\#,A}^*
	\end{equation}
	then, combining \e{k5} and \e{k6}, we obtain
	\begin{align*}
		\int_{X}\|f-f_{\phi_A}\|_\ZU\phi_B&\le \int_{X}\|f-f_A\|_\ZU\phi_B+\|f_{\phi_A}-f_A\|_\ZU\\
		&\lesssim (\|\omega\|+\log(\mu(B)/\mu(A)))\cdot \langle f\rangle_{\#,A}^*.
	\end{align*}
\end{proof}
\begin{remark}\label{RK1}
	We will also use the inequality 
	\begin{equation}\label{k7}
		\|f_{\phi_A}-f_{\phi_B}\|_\ZU\lesssim(\|\omega\|+\log(\mu(B)/\mu(A)) )\cdot \langle f\rangle_{\#,A}^*,
	\end{equation}
	which immediately follows from \e{y2}.
\end{remark}

\begin{proof}[Proof of \trm{T2}]
	Let $f\in L^r(X,\ZU)$ be a nontrivial function and $B$ be an arbitrary ball. Set $g=(f-f_{\phi_B})\cdot \ZI_{[B]}$ and  $E=E_{B,\lambda}=\{y\in B:\, \MM g(y)\le \lambda\}$, where $\MM$ is the maximal function in \e{1-1} with parameters $\varrho=\rho=1/r$, $r\ge 1$. According to the weak-$L^r$ bound of $\MM$ we have
	\begin{equation}
		\mu(B\setminus E)=\mu\{y\in B:\, \MM g(y)>\lambda\}\lesssim  \frac{1}{\lambda^r}\cdot \int_{[B]}|g|^r.
	\end{equation}
	So for an appropriate number $\lambda\sim (1-\alpha)^{-1/r}\langle g\rangle_{[B]}$	we have $\mu(B\setminus E)< (1-\alpha)\mu(B)$ and therefore,
	\begin{equation}\label{b2}
		\mu(E)>\alpha \mu(B).
	\end{equation} 
	Applying left hand side of \e{y4}, as well as \e{y2} and \e{k7}, for any $y\in E$ we can write
	\begin{align}
		&\MM g(y)\lesssim (1-\alpha)^{-1/r}\langle g\rangle_{[B]}=(1-\alpha)^{-1/r}\langle f-f_{\phi_B}\rangle_{[B]}\label{b1}\\
		&\qquad\quad\lesssim (1-\alpha)^{-1/r}\left(\langle f-f_{\phi_{[B]}}\rangle_{\phi_{[B]}}+\|f_{\phi_B}-f_{\phi_{[B]}}\|_\ZU\right)\\
		&\qquad\quad\, \lesssim (1-\alpha)^{-1/r}\|\omega\|\langle f\rangle^*_{\#,B},\quad y\in E.
	\end{align}
	Take arbitrary points $x,x'\in E$. Without loss of generality we can suppose that $\MM^{\phi,\ZG} f(x)\ge \MM^{\phi,\ZG} f(x')$. For any $\delta >0$ there is a ball $A\in \ZG(x)$ (so $A\ni x$)  such that
	\begin{equation}\label{k1}
		\MM^{\phi,\ZG} f(x)\le  \langle f\rangle_{A}+\delta.
	\end{equation}
	If $\mu(A)>\mu(B)$, then we have $x'\in B\subset [A]$ and so there is a ball $\bar A\in \ZB(x')$ such that 
	\begin{equation}\label{k2}
		\bar A\supset [A],\quad \mu(\bar A)\le \eta \mu([A])\lesssim \mu(A).
	\end{equation}
	Then by definition of $\MM^{\phi,\ZG}$ we can write
	\begin{equation}\label{k3}
		\MM^{\phi,\ZG} f(x')\ge \langle f\rangle_{\phi_{\bar A}}.
	\end{equation}
	Using left hand side of \e{y4} together with relations \e{y2}, \e{k1}, \e{k2} and \e{k3}, we obtain
	\begin{align}
		\MM^{\phi,\ZG} f(x)-\MM^{\phi,\ZG} f(x')&\le \langle f\rangle_{A}-\langle f\rangle_{\phi_{\bar A}}+\delta\\
		&\le \langle f-f_{\phi_{\bar A}}\rangle_{A}+|f_{\phi_{\bar A}}|+\langle f-f_{\phi_{\bar A}}\rangle_{\phi_{\bar A}}-|f_{\phi_{\bar A}}|+\delta\\
		&\lesssim  \langle f-f_{\phi_{\bar A}}\rangle_{\phi_{\bar A}}+\langle f-f_{\phi_{\bar A}}\rangle_{\phi_{\bar A}}+\delta\\
		&\lesssim \|\omega\|\langle f\rangle^*_{\#,B}+\delta.\label{a10}
	\end{align}
	If $\mu(A)\le \mu(B)$, then $x'\in A\subset  [B]$ and 
	\begin{equation}\label{k4}
		\MM^{\phi,\ZG} f(x')\ge \langle f\rangle_{\phi_{\bar B}}
	\end{equation}
	for a ball $\bar B\in \ZB(x')$ with $\mu(\bar B)\le \eta \mu([B])\lesssim\mu(B)$. Thus, using also \e{b1} and \e{k1}, we obtain
	\begin{align}\label{a11}
		\MM^{\phi,\ZG} f(x)-\MM^{\phi,\ZG} f(x')&\le \langle f\rangle_{A}-\langle f\rangle_{\phi_{\bar B}}+\delta\\
		&\le \langle f-f_{\phi_{\bar B}}\rangle_{A}+\|f_{\phi_{\bar B}}\|_\ZU+\langle f-f_{\phi_{\bar B}}\rangle_{{\phi_{\bar B}}}-\|f_{\phi_{\bar B}}\|_\ZU+\delta\\
		&\le \langle f-f_{\phi_{B}}\rangle_{A}+\|f_{\phi_B}-f_{\phi_{\bar B}}\|_\ZU+\langle f-f_{\phi_{\bar B}}\rangle_{{\phi_{\bar B}}}+\delta\\
		&\lesssim \langle g\rangle_{A}+ \|\omega\|\langle f\rangle^*_{\#,B}+\delta\\
		&\le \MM g(x)+ \|\omega\|\langle f\rangle^*_{\#,B}+\delta\\
		&\lesssim  (1-\alpha)^{-1/r} \|\omega\|\langle f\rangle^*_{\#,B}+\delta.
	\end{align}
	Since $\delta$ can be arbitrary small, from \e{a10} and \e{a11} we conclude
	\begin{equation*}
		|\MM^{\phi,\ZG} f(x)-\MM^{\phi,\ZG} f(x')|\lesssim (1-\alpha)^{-1/r} \|\omega\|\langle f\rangle^*_{\#,B}, \quad x,x'\in E.
	\end{equation*}
	This implies
	\begin{equation}\label{b3}
		\OSC_E(\MM^{\phi,\ZG} f)\lesssim (1-\alpha)^{-1/r} \|\omega\|\langle f\rangle^*_{\#,B}. 
	\end{equation}
	Combining \e{b2} and  \e{b3}, we deduce \e{2}, completing the proof of theorem.
\end{proof}
The proofs of corollaries \ref{KC1} and \ref{KC2} are similar to the analogous corollaries \e{T8}. One just need to combine the result of \trm{T2} with \pro {T3}.
\bigskip

\noindent
\textbf{Declaration of competing interest}

\bigskip
The Authors have no conflicts of interest to declare that are relevant to the content
of this article.

\bigskip
\noindent
\textbf{Data availability}

\bigskip
Data sharing is not applicable to this article as no datasets were generated or analysed
during the current study.
\begin{bibdiv}
\begin{biblist}
	\bib{BDS}{article}{
		author={Bennett, Colin},
		author={DeVore, Ronald A. },
		author={ Sharpley, Robert},
		title={Weak-$L^\infty$ and $\BMO$},
		journal={Annals of Math.},
		volume={113},
		date={1981},
		number={3},
		pages={601--611},
		doi={10.2307/2006999},
	}
	\bib{Buck}{article}{
		author={Buckley, Stephen M.},
		title={Estimates for operator norms on weighted spaces and reverse Jensen
			inequalities},
		journal={Trans. Amer. Math. Soc.},
		volume={340},
		date={1993},
		number={1},
		pages={253--272},
		issn={0002-9947},
		review={\MR{1124164}},
		doi={10.2307/2154555},
	}
	\bib{Bur}{article}{
		author={Burkholder, D. L.},
		title={Distribution function inequalities for martingales},
		journal={Ann. Probability},
		volume={1},
		date={1973},
		pages={19--42},
		issn={0091-1798},
		review={\MR{365692}},
		doi={10.1214/aop/1176997023},
	}
		\bib{BuGu}{article}{
		author={Burkholder, D. L.},
		author={Gundy, R. F.},
		title={Distribution function inequalities for the area integral},
		note={Collection of articles honoring the completion by Antoni Zygmund of
			50 years of scientific activity, VI},
		journal={Studia Math.},
		volume={44},
		date={1972},
		pages={527--544},
		issn={0039-3223},
		review={\MR{0340557}},
		doi={10.4064/sm-44-6-527-544},
	}
\bib{CaPe}{article}{
	author={Canto, Javier},
	author={P\'{e}rez, Carlos},
	title={Extensions of the John-Nirenberg theorem and applications},
	journal={Proc. Amer. Math. Soc.},
	volume={149},
	date={2021},
	number={4},
	pages={1507--1525},
	issn={0002-9939},
	review={\MR{4242308}},
	doi={10.1090/proc/15302},
}
\bib{Car}{article}{
	author={Carleson, Lennart},
	title={On convergence and growth of partial sums of Fourier series},
	journal={Acta Math.},
	volume={116},
	date={1966},
	pages={135--157},
	issn={0001-5962},
	review={\MR{0199631}},
	doi={10.1007/BF02392815},
}
		\bib{CoFe}{article}{
		author={Coifman, R. R.},
		author={Fefferman, C.},
		title={Weighted norm inequalities for maximal functions and singular
			integrals},
		journal={Studia Math.},
		volume={51},
		date={1974},
		pages={241--250},
		issn={0039-3223},
		review={\MR{0358205}},
		doi={10.4064/sm-51-3-241-250},
	}
	\bib{CoRe}{article}{
		author={Conde-Alonso, Jos\'{e} M.},
		author={Rey, Guillermo},
		title={A pointwise estimate for positive dyadic shifts and some
			applications},
		journal={Math. Ann.},
		volume={365},
		date={2016},
		number={3-4},
		pages={1111--1135},
		issn={0025-5831},
		review={\MR{3521084}},
		doi={10.1007/s00208-015-1320-y},
	}
\bib{Ming}{article}{
	author={Cao, Mingming},
	author={Ibañez-Firnkorn, Gonzalo },
	author={Rivera-RíosI, Israel P. },
	author={Xue, Qingying }
	author={Yabuta, Kôzô  }
	title={A class of multilinear bounded oscillation operators on measure spaces and applications},
	journal={accepted to Math. Ann.},
	issn={0025-5831},
	doi={10.1007/s00208-023-02619-5},
}
	\bib{Kok}{article}{
	author={Gabidzashvili, M. },
	author={Kokilashvili, V.},
	title={Two weight weak type inequalities for fractional type integrals},
	journal={Ceskoslovenska Akademie },
	volume={45},
	date={1989},
	pages={1--11},
}
\bib{Hyt}{article}{
	author={Hyt\"{o}nen, Tuomas P.},
	title={The sharp weighted bound for general Calder\'on-Zygmund operators},
	journal={Annals of Math.},
	volume={175},
	date={2012},
	number={3},
	pages={1473--1506}
}
\bib{HyRo}{article}{
	author={Hyt\"{o}nen, Tuomas P.},
	author={Roncal, Luz},
	author={Tapiola, Olli},
	title={Quantitative weighted estimates for rough homogeneous singular
		integrals},
	journal={Israel J. Math.},
	volume={218},
	date={2017},
	number={1},
	pages={133--164},
	issn={0021-2172},
	review={\MR{3625128}},
	doi={10.1007/s11856-017-1462-6},
}

\bib{JaTo}{article}{
	author={Jawerth, B.},
	author={Torchinsky, A.},
	title={Local sharp maximal functions},
	journal={J. Approx. Theory},
	volume={43},
	date={1985},
	number={3},
	pages={231--270},
	issn={0021-9045},
	review={\MR{779906}},
	doi={10.1016/0021-9045(85)90102-9},
}
		\bib{Kar3}{article}{
			author={Karagulyan, Grigori A.},
			title={An abstract theory of singular operators},
			journal={Trans. Amer. Math. Soc.},
			volume={372},
			date={2019},
			number={7},
			pages={4761--4803},
			issn={0002-9947},
			review={\MR{4009440}},
			doi={10.1090/tran/7722},
		}
		\bib{Kar1}{article}{
		author={Karagulyan, Grigori A.},
		title={On good-$\lambda$ inequalities for couples of measurable
			functions},
		journal={Indiana Univ. Math. J.},
		volume={70},
		date={2021},
		number={6},
		pages={2405--2425},
		issn={0022-2518},
		review={\MR{4359914}},
		doi={10.1512/iumj.2021.70.8722},
	}
	\bib{Kar2}{article}{
		author={Karagulyan, Grigori A.},
		title={Exponential estimates for the Calder\'{o}n-Zygmund operator and
			related problems of Fourier series},
		language={Russian, with Russian summary},
		journal={Mat. Zametki},
		volume={71},
		date={2002},
		number={3},
		pages={398--411},
		issn={0025-567X},
		translation={
			journal={Math. Notes},
			volume={71},
			date={2002},
			number={3-4},
			pages={362--373},
			issn={0001-4346},
		},
	review={\MR{1913610}},
	doi={10.1023/A:1014850924850},
}		
		\bib{Lac}{article}{
			author={Lacey, Michael T.},
			title={An elementary proof of the $A_2$ bound},
			journal={Israel J. Math.},
			volume={217},
			date={2017},
			number={1},
			pages={181--195},
			issn={0021-2172},
			review={\MR{3625108}},
			doi={10.1007/s11856-017-1442-x},
		}
	\bib{LaPe}{article}{
		author={Lacey, Michael T.},
		author={Moen, Kabe},
		author={P\'{e}rez, Carlos},
		author={Torres, Rodolfo H.},
		title={Sharp weighted bounds for fractional integral operators},
		journal={J. Funct. Anal.},
		volume={259},
		date={2010},
		number={5},
		pages={1073--1097},
		issn={0022-1236},
		review={\MR{2652182}},
		doi={10.1016/j.jfa.2010.02.004},
	}
	\bib{Ler1}{article}{
		author={Lerner, Andrei K.},
		title={A pointwise estimate for the local sharp maximal function with
			applications to singular integrals},
		journal={Bull. Lond. Math. Soc.},
		volume={42},
		date={2010},
		number={5},
		pages={843--856},
		issn={0024-6093},
		review={\MR{2721744}},
		doi={10.1112/blms/bdq042},
	}
	\bib{Ler}{article}{
		author={Lerner, Andrei K.},
		title={A simple proof of the $A_2$ conjecture},
		journal={Int. Math. Res. Not. IMRN},
		date={2013},
		number={14},
		pages={3159--3170},
		issn={1073-7928},
		review={\MR{3085756}},
		doi={10.1093/imrn/rns145},
	}
\bib{LeLo}{article}{
	author={Lerner, Andrei K.},
	author={Lorist, Emiel},
	author={Ombrosi, Sheldy},
	title={Operator-free sparse domination},
	journal={Forum Math. Sigma},
	volume={10},
	date={2022},
	pages={Paper No. e15, 28},
	review={\MR{4387778}},
	doi={10.1017/fms.2022.8},
}
\bib{LeNa}{article}{
	author={Lerner, Andrei K.},
	author={Nazarov, Fedor},
	title={Intuitive dyadic calculus: the basics},
	journal={Expo. Math.},
	volume={37},
	date={2019},
	number={3},
	pages={225--265},
	issn={0723-0869},
	review={\MR{4007575}},
	doi={10.1016/j.exmath.2018.01.001},
}
\bib{Lie}{article}{
	author={Lie, Victor},
	title={The polynomial Carleson operator},
	journal={Ann. of Math. (2)},
	volume={192},
	date={2020},
	number={1},
	pages={47--163},
	issn={0003-486X},
	review={\MR{4125450}},
	doi={10.4007/annals.2020.192.1.2},
}
\bib{Lor}{article}{
	author={Lorist, Emiel},
	title={On pointwise $\ell^r$-sparse domination in a space of homogeneous
		type},
	journal={J. Geom. Anal.},
	volume={31},
	date={2021},
	number={9},
	pages={9366--9405},
	issn={1050-6926},
	review={\MR{4302224}},
	doi={10.1007/s12220-020-00514-y},
}
\bib{Muck1}{article}{
	author={Muckenhoupt, Benjamin},
	author={Wheeden, Richard},
	title={Weighted norm inequalities for fractional integrals},
	journal={Trans. Amer. Math. Soc.},
	volume={192},
	date={1974},
	pages={261--274},
	issn={0002-9947},
	review={\MR{340523}},
	doi={10.2307/1996833},
}

\bib{Per}{article}{
author={Ortiz-Caraballo, Carmen},
author={P\'{e}rez, Carlos},
author={Rela, Ezequiel},
title={Exponential decay estimates for singular integral operators},
journal={Math. Ann.},
volume={357},
date={2013},
number={4},
pages={1217--1243},
issn={0025-5831},
review={\MR{3124931}},
doi={10.1007/s00208-013-0940-3},
}
\bib{Pee}{article}{
	author={Peetre, Jaak},
	title={On convolution operators leaving $L^{p,}\,^{\lambda }$ spaces
		invariant},
	journal={Ann. Mat. Pura Appl. (4)},
	volume={72},
	date={1966},
	pages={295--304},
	issn={0003-4622},
	review={\MR{209917}},
	doi={10.1007/BF02414340},
}
\bib{Per2}{article}{
	author={P\'{e}rez, Carlos},
	title={Two weighted inequalities for potential and fractional type
		maximal operators},
	journal={Indiana Univ. Math. J.},
	volume={43},
	date={1994},
	number={2},
	pages={663--683},
	issn={0022-2518},
	review={\MR{1291534}},
	doi={10.1512/iumj.1994.43.43028},
}
\bib{Per3}{article}{
	author={P\'{e}rez, Carlos},
	title={Sharp $L^p$-weighted Sobolev inequalities},
	language={English, with English and French summaries},
	journal={Ann. Inst. Fourier (Grenoble)},
	volume={45},
	date={1995},
	number={3},
	pages={809--824},
	issn={0373-0956},
	review={\MR{1340954}},
}
\bib{Saw1}{article}{
	author={Sawyer, Eric},
	title={A two weight weak type inequality for fractional integrals},
	journal={Trans. Amer. Math. Soc.},
	volume={281},
	date={1984},
	number={1},
	pages={339--345},
	issn={0002-9947},
	review={\MR{719674}},
	doi={10.2307/1999538},
}
\bib{Saw2}{article}{
	author={Sawyer, Eric T.},
	title={A characterization of two weight norm inequalities for fractional
		and Poisson integrals},
	journal={Trans. Amer. Math. Soc.},
	volume={308},
	date={1988},
	number={2},
	pages={533--545},
	issn={0002-9947},
	review={\MR{930072}},
	doi={10.2307/2001090},
}
\bib{Saw3}{article}{
author={Sawyer, E.},
author={Wheeden, R. L.},
title={Weighted inequalities for fractional integrals on Euclidean and
	homogeneous spaces},
journal={Amer. J. Math.},
volume={114},
date={1992},
number={4},
pages={813--874},
issn={0002-9327},
review={\MR{1175693}},
doi={10.2307/2374799},
}
\bib{Spa}{article}{
	author={Spanne, Sven},
	title={Some function spaces defined using the mean oscillation over
		cubes},
	journal={Ann. Scuola Norm. Sup. Pisa Cl. Sci. (3)},
	volume={19},
	date={1965},
	pages={593--608},
	issn={0391-173X},
	review={\MR{190729}},
}
\bib{Ste}{book}{
	author={Stein, Elias M.},
	title={Harmonic analysis: real-variable methods, orthogonality, and
		oscillatory integrals},
	series={Princeton Mathematical Series},
	volume={43},
	note={With the assistance of Timothy S. Murphy;
		Monographs in Harmonic Analysis, III},
	publisher={Princeton University Press, Princeton, NJ},
	date={1993},
	pages={xiv+695},
	isbn={0-691-03216-5},
	review={\MR{1232192}},
}
\bib{Ste1}{book}{
	author={Stein, Elias M.},
	title={Singular integrals and differentiability properties of functions},
	series={Princeton Mathematical Series, No. 30},
	publisher={Princeton University Press, Princeton, N.J.},
	date={1970},
	pages={xiv+290},
	review={\MR{0290095}},
}
\bib{TTV}{article}{
	author={Thiele, Christoph},
	author={Treil, Sergei},
	author={Volberg, Alexander},
	title={Weighted martingale multipliers in the non-homogeneous setting and
		outer measure spaces},
	journal={Adv. Math.},
	volume={285},
	date={2015},
	pages={1155--1188},
	issn={0001-8708},
	review={\MR{3406523}},
	doi={10.1016/j.aim.2015.08.019},
}
\bib{Zor}{article}{
	author={Zorin-Kranich, Pavel},
	title={Maximal polynomial modulations of singular integrals},
	journal={Adv. Math.},
	volume={386},
	date={2021},
	pages={Paper No. 107832, 40},
	issn={0001-8708},
	review={\MR{4270523}},
	doi={10.1016/j.aim.2021.107832},
}
\end{biblist}
\end{bibdiv}
\end{document}